\providecommand{\U}[1]{\protect\rule{.1in}{.1in}}
\newtheorem{theorem}{Theorem}
\newtheorem{corollary}[theorem]{Corollary}
\newtheorem{definition}{Definition}
\newtheorem{lemma}{Lemma}
\newtheorem{proposition}{Proposition}
\newtheorem{remark}{Remark}
\numberwithin{equation}{section}
\numberwithin{remark}{section}
\numberwithin{proposition}{section}
\numberwithin{definition}{section}
\numberwithin{lemma}{section}
\begin{document}

\title{Weak Solutions for the Stationary Anisotropic and Nonlocal Compressible
Navier-Stokes System}
\author{D. Bresch\thanks{Univ. Grenoble Alpes, Univ. Savoie Mont-Blanc, CNRS, LAMA,
Chamb\'{e}ry, France; didier.bresch@univ-smb.fr}, \hskip1cm C. Burtea
\thanks{Universit\'{e} Paris Diderot UFR Math\'{e}matiques Batiment Sophie
Germain, Bureau 727, 8 place Aur\'{e}lie Nemours, 75013 Paris;
cburtea@math.univ-paris-diderot.fr}\thinspace}
\maketitle

\begin{abstract}
In this paper, we prove existence of weak solutions for the stationary
compressible Navier-Stokes equations with an anisotropic and nonlocal viscous
stress tensor in a periodic domain ${\mathbb{T}}^{3}$. This gives an answer to
an open problem important for applications in geophysics or in microfluidics.
One of the key ingredients is the new identity discovered by the authors in 
\cite{BrBu} which was used to study the non-stationary anisotropic
compressible Brinkman system.

\end{abstract}

\section{Introduction}

The stationary Navier-Stokes system for a barotropic compressible viscous
fluid reads
\begin{equation}
\left\{
\begin{array}
[c]{l}%
\operatorname{div}\left(  \rho u\right)  =0,\\
\operatorname{div}\left(  \rho u\otimes u\right)  -\mu\Delta u-\left(
\mu+\lambda\right)  \nabla\operatorname{div}u+\nabla p\left(  \rho\right)
=\rho f+g,
\end{array}
\right.  \label{Navier_Stokes_stationary_classic}%
\end{equation}
where $\mu$ and $\lambda$ are given positive constants representing the shear
respectively the bulk viscosities, $f,g\in\mathbb{R}^{3}$ are given exterior
forces acting on the fluid, $\rho\geq0$ is the density, $p\left(  \rho\right)
=a\rho^{\gamma}$ represents the pressure, where $a>0$ and $\gamma\geq1$ are
given constants while $u\in\mathbb{R}^{3}$ is the velocity field. The total
mass of the fluid is given i.e. the above system should be considered along
with the equation
\begin{equation}
\int_{\Omega}\rho=M>0, \label{mass_of_fluid}%
\end{equation}
where $M$ is given.

It is important to point out that all the known mathematical results regarding
the existence of weak solutions for the stationary Navier-Stokes system
strongly use the isotropic and local structure of the viscous stress tensor
owing to the nice algebraic properties it induces for the so-called effective
flux. Extending these results such as to take in account anisotropic or
nonlocal viscous stress-tensors remained an open problem until now.

As explained in \cite{Li}, one cannot expect that
\eqref{Navier_Stokes_stationary_classic}-\eqref{mass_of_fluid} with periodic
boundary conditions to have a solution for any $f,g\in L^{\infty}$ because of
the compatibility condition%
\begin{equation}%
%TCIMACRO{\dint _{\mathbb{T}^{3}}}%
%BeginExpansion
{\displaystyle\int_{\mathbb{T}^{3}}}
%EndExpansion
(\rho f+g)=0 \label{forces}%
\end{equation}
which comes from integrating the momentum equation. Thus, if $f$ and $g$ have
positive components this would imply that $\rho=0$ which clearly violates the
total mass condition. One way to bypass this structural defect of the periodic
case is to proceed as in \cite{BrezinaNovotny2008} and consider forces $f$
that posses a certain symmetry which ensures the validity of $\left(
\text{\ref{forces}}\right)  $. Another way to bypass this problem was
suggested by P.L. Lions in \cite{Li} and consists in introducing the term
$B\times(B\times u)$ with $B\in L^{\infty}\left(  \mathbb{T}^{3}\right)  $ a
non-constant function in the momentum equation which can be interpreted as the
effect of a magnetic field on the fluid. We claim that the ideas presented in
the present paper can be adapted to handle both situations but in order to
avoid extra technical difficulties we choose to treat the case where $f=0$. We
propose here to investigate the problem of existence of weak solutions
$(\rho,u)$ for the following system:%
\begin{equation}
\left\{
\begin{array}
[c]{l}%
\operatorname{div}\left(  \rho u\right)  =0,\\
\operatorname{div}\left(  \rho u\otimes u\right)  -\mathcal{A}u+a\nabla
\rho^{\gamma}=g,
\end{array}
\right.  \label{Stationary_Stokes}%
\end{equation}
with
\begin{equation}
\rho\geq0,\qquad\int_{\mathbb{T}^{3}}\rho\left(  x\right)  dx=M>0,\qquad
\int_{\mathbb{T}^{3}}u\left(  x\right)  dx=0, \label{Stationary_Stokes_extra}%
\end{equation}
where the viscous diffusion operator $\mathcal{A}$ is given by%
\begin{equation}
\mathcal{A}\cdot\text{ }\mathcal{=}\underset{\text{classical part}%
}{\underbrace{\mu\Delta\cdot+\left(  \mu+\lambda\right)  \nabla
\operatorname{div}\cdot}}+\underset{\text{anisotropic part}}{\underbrace
{\mu\theta\partial_{33}\cdot}}+\underset{\text{nonlocal part}}{\underbrace
{\eta\ast\Delta\cdot+\xi\ast\nabla\operatorname{div}\cdot}}.
\label{diffusion_operator}%
\end{equation}
We will assume the following hypothesis (H):

\begin{itemize}
\item A given total mass $M>0$ of the fluid.

\item An adiabatic constant $\gamma>3$ and a positive constant $a>0$.

\item A forcing term $g$ such that
\[
g\in(L^{\frac{3\left(  \gamma-1\right)  }{2\gamma-1}}\left(  \mathbb{T}%
^{3}\right)  )^{3}\qquad\hbox{ with } \int_{\mathbb{T}^{3}}g=0.
\]

\item The constant $\mu$, $\lambda$ and $\theta$ such that
\[
\mu, \quad\mu+\lambda>0 \qquad\hbox{ and } \qquad\theta>-1.
\]

\item The functions $\eta$ and $\xi$ satisfying
\[
\min\left\{  1,1+\theta\right\}  \mu-\left\Vert \eta\right\Vert _{L^{1}}%
-\frac{1}{3}\left\Vert \xi\right\Vert _{L^{1}}>0 \qquad\text{ or } \qquad
\hat{\eta}\left(  k\right)  ,\hat{\xi}\left(  k\right)  \in\mathbb{R}%
^{+}\text{ for all }k\in\mathbb{Z}^{3}
\]
and
\[
\nabla\eta, \quad\nabla\xi\in L^{2}\left(  \mathbb{T}^{3}\right)  .
\]

\end{itemize}

The main objective of the paper is to prove existence of a weak solution \`{a}
la Leray for the steady compressible barotropic Navier-Stokes system with
anisotropic and nonlocal diffusion. Anisotropic diffusion is present for
instance in geophysical flows, see \cite{Pedlosky2013}, while nonlocal
diffusion is considered when studying confined fluids or in microfluidics
where fluids flows thought narrow vessels. In order to achieve this goal, one
key ingredient is the identity that we proposed in \cite{BrBu} which allowed
us to give a simple proof for the existence of global weak-solutions for the
anisotropic quasi-stationary Stokes system (compressible Brinkman equations).

\medskip

\noindent More precisely, in this paper, we prove the following existence result

\begin{theorem}
\label{Main1} Let us assume Hypothesis \textrm{(H)} be satisfied. There exists
a constant $c_{0}$ such that if
\begin{equation}
(1+|\theta|)\left\vert \theta\right\vert \mu\frac{2\lambda+\mu}{\left(
\lambda+\mu\right)  ^{2}}\leq c_{0}, \label{smallness}%
\end{equation}
then there exists a pair $\left(  \rho,u\right)  \in L^{3\left(
\gamma-1\right)  }\left(  \mathbb{T}^{3}\right)  \times(W^{1,\frac{3\left(
\gamma-1\right)  }{\gamma}}\left(  \mathbb{T}^{3}\right)  )^{3}$ which is a
weak-solution of the stationary Navier-Stokes system \eqref{Stationary_Stokes}--\eqref{Stationary_Stokes_extra}.
\end{theorem}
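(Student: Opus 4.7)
The strategy I would adopt is the by-now-standard multi-level approximation scheme of Lions--Feireisl, tailored to the stationary problem, with the identity from \cite{BrBu} replacing the usual effective-viscous-flux identity. First, I would regularize \eqref{Stationary_Stokes} at two levels: add $\varepsilon\rho$ to the left-hand side of the continuity equation so that it becomes elliptic, and augment the pressure by an artificial term $\delta\rho^{\beta}$ with $\beta$ large. For fixed $\varepsilon,\delta>0$, the regularized system can then be solved by a Galerkin scheme on the velocity coupled with a Banach fixed-point argument for the modified continuity equation, producing smooth approximate solutions $(\rho_{\varepsilon,\delta},u_{\varepsilon,\delta})$ for which all of the manipulations below are legitimate.

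Uniform estimates come next. Testing the momentum equation against $u$ and exploiting the coercivity of $-\mathcal{A}$ (guaranteed by hypothesis \textrm{(H)}, via either the $L^{1}$-smallness of $\eta,\xi$ or the pointwise Fourier positivity of $\hat\eta,\hat\xi$) yields an $H^{1}$ bound on the velocity together with $\delta\|\rho\|_{L^{\beta+1}}^{\beta+1}\lesssim 1$ from the artificial pressure. A Bogovskii-type test function of the form $\nabla\Delta^{-1}(\rho^{s}-\langle\rho^{s}\rangle)$, inserted into the momentum equation, then upgrades the density bound to $\rho\in L^{3(\gamma-1)}$ uniformly in $\varepsilon,\delta$. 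The hypothesis $\gamma>3$ enters here to guarantee, in combination with $u\in L^{6}$, that the convective flux $\rho u\otimes u$ is uniformly bounded in a reflexive space from which weak compactness can be extracted, and that $g\cdot u$ is integrable under the assumed regularity of $g$.

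The crux of the proof is the identification of the weak limit of the pressure $\rho^{\gamma}$, which requires strong convergence of the density along the approximating sequence. In the isotropic and local case, this proceeds via the weak continuity of the effective flux $(2\mu+\lambda)\operatorname{div}u - a\rho^{\gamma}$, an algebraic structure destroyed both by the anisotropic term $\mu\theta\partial_{33}$ and by the convolution operators $\eta\ast\Delta\cdot$ and $\xi\ast\nabla\operatorname{div}\cdot$. My plan is to substitute the identity introduced in \cite{BrBu}: applying a suitable Fourier multiplier to the momentum equation, one constructs a modified flux that is weakly continuous along the approximation, up to a remainder whose size is controlled by the anisotropy coefficient $\mu\theta$ (and by the nonlocal kernels). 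The smallness assumption \eqref{smallness} is precisely what allows this remainder to be absorbed into the dissipative part, producing a one-sided inequality of monotone type on the defect $\overline{\rho^{\gamma+1}}-\rho\,\overline{\rho^{\gamma}}$. Together with the renormalized continuity equation, valid because $u\in H^{1}$ and $\rho\in L^{2}$, this forces $\rho_{n}\to\rho$ strongly in $L^{1}$, after which one passes successively to the limit in $\varepsilon$ and then in $\delta$ to recover a weak solution with the stated integrability. The principal obstacle is exactly this last step: constructing the modified flux in the anisotropic nonlocal setting and verifying that its commutator-type remainder can be absorbed under \eqref{smallness}; everything upstream is robust adaptation of known arguments.
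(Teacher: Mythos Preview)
Your outline contains a genuine gap in the compactness step, and it misplaces the role of the smallness condition \eqref{smallness}.

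First, the smallness condition is \emph{not} used in the pressure-identification argument. In the paper it enters only in the a priori estimate for $\|\rho\|_{L^{3(\gamma-1)}}$: after applying $\operatorname{div}$ to the momentum equation and testing with $\rho^{\alpha}$ (equivalently, using your Bogovskii-type test function), one term is of the form
\[
\int_{\mathbb{T}^{3}}\rho^{\alpha}\Bigl(Id-(2\mu+\lambda)\bigl(\mu\Delta_{\theta}+(\mu+\lambda)\Delta\bigr)^{-1}\Delta\Bigr)\rho^{\gamma},
\]
and the operator norm of the multiplier in parentheses is bounded by $C(1+|\theta|)|\theta|\mu(2\lambda+\mu)/(\lambda+\mu)^{2}$. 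Condition \eqref{smallness} is what lets this term be absorbed into $\int\rho^{\alpha+\gamma}$. Once the uniform bound $\rho\in L^{3(\gamma-1)}$, $\nabla u\in L^{3(\gamma-1)/\gamma}$ is obtained, smallness plays no further role.

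Second, and more seriously, the identity from \cite{BrBu} is not a Fourier-multiplier construction of a modified effective flux with a small remainder. Your proposed mechanism---build a flux that is weakly continuous up to a remainder of size $O(|\theta|)$, then absorb---runs into the difficulty that the remainder is of the form $\overline{b(\rho)\,R\rho^{\gamma}}-\overline{b(\rho)}\,R\,\overline{\rho^{\gamma}}$ with $R=\Delta_{\theta}^{-1}\Delta-Id$: a product of two merely weakly convergent sequences composed with a nonlocal operator, whose sign and size you cannot control pointwise. The paper explicitly discusses why this route (its equation for $F_{an}^{\varepsilon}$) fails.

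The actual mechanism is different and does not rely on smallness. One tests the momentum equation with $u^{\varepsilon}$ itself and combines with the renormalized continuity equation for $(\rho^{\varepsilon})^{\gamma}$ to obtain, after passing to the limit and subtracting the analogous identity for $(\rho,u)$,
\[
\frac{1}{\gamma-1}\operatorname{div}\bigl(u(\overline{\rho^{\gamma}}-\rho^{\gamma})\bigr)+(\overline{\rho^{\gamma}}-\rho^{\gamma})\operatorname{div}u+\overline{\mathcal{C}(u,u)}-\mathcal{C}(u,u)\le 0,
\]
where $\mathcal{C}(u,u)=\mu|\nabla_{\theta}u|^{2}+(\mu+\lambda)(\operatorname{div}u)^{2}$. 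Renormalizing this by $(\overline{\rho^{\gamma}}-\rho^{\gamma}+h)^{1/\gamma-1}$ and integrating forces $\overline{\mathcal{C}(u,u)}=\mathcal{C}(u,u)$ a.e., i.e.\ $\nabla u^{\varepsilon}\to\nabla u$ \emph{strongly}. Only then does one invoke an anisotropic effective flux, obtained by applying $\operatorname{div}_{\theta}$ (not $\operatorname{div}$) to the momentum equation: the quantity $\mu\operatorname{div}_{\theta}u+(\mu+\lambda)\operatorname{div}u+\xi\ast\operatorname{div}u-\rho^{\gamma}$ is genuinely compact (no remainder), and combined with the now-available strong convergence of $\nabla u$ this yields $\overline{\rho^{\gamma+1}}=\rho\,\overline{\rho^{\gamma}}$ directly.

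Your approximation scheme (artificial pressure $\delta\rho^{\beta}$, Galerkin) differs from the paper's (no artificial pressure; $-\varepsilon\Delta\rho+\delta(\rho-M)$ in the mass equation with matching $\varepsilon\nabla u\nabla\rho$ in the momentum equation; Leray--Schauder), but that discrepancy is harmless. The essential gap is that you have not identified the correct compactness mechanism: strong convergence of the velocity gradient, obtained from the dissipation-defect identity, is the new ingredient, and it is what makes the anisotropic effective flux usable.
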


\begin{remark}
Motivated by physically relevant phenomena like anisotropy or
thermodynamically unstable pressure state laws, D. Bresch and P.E. Jabin
introduced in \textrm{\cite{BrJa}} a new method for the identification of the
pressure in the study of stability of solutions for the non-stationary
compressible Navier-Stokes system. More precisely, if one considers a sequence
of solutions generated by a sequence of initial data for which the
corresponding sequence of initial densities is compact in $L^{1}$, then one is
able to propagate this information for latter times via a nonlocal compactness
criterion modulated with appropriate nonlinear weights. The idea in
\textrm{\cite{BrJa}}, propagation of compactness, is intimately related to the
non-stationary transport equation and it does not seem to adapt to the
stationary case.
\end{remark}

\begin{remark}
The proof of Theorem \ref{Main1} can be adapted to accommodate more general
diffusion operators than $\left(  \text{\ref{diffusion_operator}}\right)  $.
In particular, our method adapts to viscous stress tensors that include
space-dependent coefficients or different convolution kernels for each
component of $u$. In the opinion of the authors, the particular form of
$\mathcal{A}$ proposed in \eqref{diffusion_operator}, besides being physically
relevant, see for instance \textrm{\cite{Eringen1972}} or \textrm{\cite{Er}},
is also relatively easier to manipulate in computations.
\end{remark}

\bigskip

To the authors's knowledge this is the first existence result of weak
solutions taking in consideration anisotropic and nonlocal diffusion for the
steady Navier-Stokes system. The first steps of the proof of Theorem
\ref{Main1} follow a rather well-known path: we consider an elliptic
regularization for the system $\left(  \text{\ref{Stationary_Stokes}}\right)
$ to which classical theory can be applied and therefore we may construct a
sequence of solutions parametrized by the regularization parameter. Of course,
the more delicate part is to recover uniform estimates with respect to the
regularization parameter and to show that the limiting object is a solution of
the stationary Navier-Stokes system. The first key ingredient in the proof of
stability is the new identity discovered by the authors in \cite{BrBu} in the
context of the compressible Brinkman system. As it turns out, the $L^{2}%
$-integrability of the velocity field obtained via the basic energy estimate
is not enough, better integrability is needed in order to justify rigorously
the aforementioned identity. This is achieved by showing that it is possible
to estimate the pressure $\rho^{\gamma}$ in a better space than $L^{2}$: the
smallness Condition \eqref{smallness} is required at this level. We point out
that a similar condition is imposed in \cite{BrJa} in order to treat the
non-stationary compressible Navier-Stokes system. In particular, one can
consider an arbitrary anisotropic amplitude if the bulk viscosity is large enough.

\medskip

The rest of the paper is organized in the following way: in Section
\ref{existing}, we recall existing results concerning weak solutions for the
steady compressible Navier--Stokes equations and we discuss energy dissipation
properties for the diffusion operator $\left(  \text{\ref{diffusion_operator}%
}\right)  $. In Section \ref{NWS} we prove a nonlinear weak stability result,
see Theorem \ref{Main2} bellow, for the stationary compressible Navier-Stokes
equation with anisotropic and nonlocal diffusion operator. In particular, we
show that it is possible to recover strong convergence of the sequence of the
gradients of the velocities and we show how to combine this fact with the
compactness properties of the anisotropic viscous flux in order to identify
the pressure. This is the main idea in the paper. In Section \ref{CAS}, we
propose an approximate system and prove the existence of solutions to such
system. Such approximate system is based on two layers of regularization: one
ensuring ellipticity while the other one providing positivity of the density.
In the last Section \ref{TE}, we prove our main result, Theorem \ref{Main1}
first by establishing uniform estimates with respect to the two parameter and
secondly using the non-linear stability results established in Section
\ref{NWS}.

\section{Existing results for the steady Compressible Navier-Stokes system and
energy dissipation properties for the diffusion operator $\mathcal{A}$
\label{existing}}

\noindent\textit{Existing results.} The problem of constructing solutions for
the above system has been intensively studied and consequently there is a
rather rich literature. We propose below a quick overview of the most recent
results. First of all, we distinguish two types of solutions: strong
respectively weak-solutions. Roughly speaking, a pair $\left(  \rho,u\right)
$ is a strong solution as soon as it verifies
\eqref{Navier_Stokes_stationary_classic}--\eqref{mass_of_fluid} almost
everywhere on the domain of study, see the works of \cite{Beirao1987},
\cite{Padula}, \cite{ValliZaza}. The existence theory of strong solutions
always comes together with some "smallness condition" pertaining either to the
size of the exterior forces $f,g$ \ acting on the system either to the size of
some physical parameters like, for example, the Mach Number see
\cite{DouJiangJiangYang2015}. However, one can prove that this solution is
unique in some sense.

A pair $\left(  \rho,u\right)  $ is weak-solution for $\left(
\text{\ref{Navier_Stokes_stationary_classic}}\right)  $ if it verifies this
system in the sense of distributions and $\rho$ is just a Lebesgue function.
One of the subtle points of the theory of weak-solutions comes from the
genuine nonlinearity induced by the pressure term $p\left(  \rho\right)
=\rho^{\gamma}$ when $\gamma>1$. In order to make things clear we discuss
briefly the most common strategy of constructing weak solutions, namely
approximating system $\left(  \text{\ref{Navier_Stokes_stationary_classic}%
}\right)  $ with an elliptic system, typically by adding $\varepsilon
\Delta\rho$ term in the mass equation. One expects that classical theory for
elliptic equations to give rise to a sequence of solutions indexed by the
approximation parameter $\varepsilon$. Of course, one should be able to obtain
estimates verified by the sequence $\left(  \rho^{\varepsilon},u^{\varepsilon
}\right)  $ uniformly with respect to $\varepsilon$ and to show that the limit
is a solution to the $\left(  \text{\ref{Navier_Stokes_stationary_classic}%
}\right)  $. However, we cannot reasonably expect to recover any regularity on
$\rho^{\varepsilon}$, one is able only to recover that $\rho^{\varepsilon}$ is
uniformly bounded in a Lebesgue space with integrability index greater than
$\gamma$. Thus, as weak convergence is not commuting with nonlinear functions
a delicate point is to be able to recover that the weak limit of the pressure
sequence is the pressure associated to the limit density. This point proved to
be difficult and the problem of existence of weak solutions resisted until
$1998$ when P.L. Lions in \cite{Li} proposed a solution combing two ingredients:

\begin{itemize}
\item renormalized transport theory which consists in the rigorous
justification of the fact that $\rho$ also verifies
\[
\operatorname{div}\left(  b\left(  \rho\right)  u\right)  +\left(  \rho
b^{\prime}\left(  \rho\right)  -b\left(  \rho\right)  \right)
\operatorname{div}u=0,
\]
for any $b$ sufficiently "well-behaved".

\item the compactness properties of the so-called effective flux
\[
F=\left(  2\mu+\lambda\right)  \operatorname{div}u-p\left(  \rho\right)  .
\]
It is easy to give some rather informal hints why the above quantity behaves
well: applying the divergence operator in the momentum equation of $\left(
\text{\ref{Navier_Stokes_stationary_classic}}\right)  $ we get that%
\[
-\Delta F=\operatorname{div}\left(  \rho f+g\right)  -\operatorname{div}%
\left(  \rho u\cdot\nabla u\right)
\]
and thus $\nabla F$ is of the same order as $\rho f+g-\rho u\cdot\nabla u$. 
\end{itemize}

We note that the effective-flux, was used before in the context of the $1D$
non-stationary Navier-Stokes system by J. Smoller and D. Hoff
\cite{HoffSmoller1985} and by D. Serre \cite{Serre1991} when studying the
problem of propagation of oscillations. In the multi-dimensional situation
P.L. Lions used these two features in order to prove that%
\[
\lim_{\varepsilon}b\left(  \rho^{\varepsilon}\right)  \left(  \left(
2\mu+\lambda\right)  \operatorname{div}u^{\varepsilon}-p\left(  \rho
^{\varepsilon}\right)  \right)  =\lim_{\varepsilon}b\left(  \rho^{\varepsilon
}\right)  \lim_{\varepsilon}\left(  \left(  2\mu+\lambda\right)
\operatorname{div}u^{\varepsilon}-p\left(  \rho^{\varepsilon}\right)  \right)
,
\]
for any $b$ sufficiently "well-behaved" where $\left(  \rho^{\varepsilon
},u^{\varepsilon}\right)  $ is a sequence of solutions for the Navier-Stokes
system. This identity is used in order to compare $\lim_{\varepsilon}%
\psi\left(  \rho^{\varepsilon}\right)  $ with $\psi\left(  \lim_{\varepsilon
}\rho^{\varepsilon}\right)  $ for an appropriate convex function $\psi$ and to
recover compactness for the density sequence. This nice argument, is of great
generality: if $\gamma$ is large enough, it can be used to prove compactness
for the density regardless of the domain where the problem of existence is
studied and even for the non-stationary version of system $\left(
\text{\ref{Navier_Stokes_stationary_classic}}\right)  $. In \cite{Li}, P.L.
Lions constructed weak-solutions for system $\left(
\text{\ref{Navier_Stokes_stationary_classic}}\right)  $ if $\gamma>5/3$ in the
case of finite domains with Dirichlet boundary condition for the velocity, in
the whole space case $\mathbb{R}^{3}$, in the periodic boundary conditions and
the case of an exterior domain. At this point it is worth mentioning that
physical relevant values for the adiabatic coefficients include $\gamma=5/3$
for monatomic gases, $\gamma=7/5$ for diatomic gases, $\gamma=4/3$ for
polyatomic gases. An argument leading to the relaxation of the condition
$\gamma>5/3$ is due to S. Novo and A. Novotn\'{y} \cite{NoNo1} where the
authors obtain existence of weak solutions for $\gamma>3/2$ and Dirichlet
boundary conditions with potential body forces $f=\nabla h\in L^{\infty}$. It
is worth mentioning that their argument relies in a crucial manner on E.
Feireisl's work \cite{Feireisl2001} on the non-stationary version of $\left(
\text{\ref{Navier_Stokes_stationary_classic}}\right)  $ where he introduced
and studied a defect measure constructed with the help of truncations of the density.

The next improvement on the admissible bound on $\gamma$ came in the context
of the periodic boundary conditions. More precisely, J. B\v{r}ezina and A.
Novotn\'{y} \cite{BrezinaNovotny2008} constructed weak-solution for
$\gamma>\left(  1+\sqrt{13}\right)  /3\approx1.53$ for volume non-potential
body forces respectively for $\gamma>\left(  3+\sqrt{41}\right)
/8\approx1.175$ in the case of potential body forces. Finally, the optimal
result in the periodic framework, existence for $\gamma>1$ was obtained in
\cite{JiangZhou2011(a)} by S.~Jiang, and C. Zhou. Concerning finite domains
with Dirichlet boundary condition, the optimal result regarding the value of
$\gamma$ is due to P.~Plotnikov and W.~Weigant \cite{PlotnikovWeigant2015} who
constructed solutions for any $f\in L^{\infty}\left(  \Omega\right)  $, $g=0$
with pressure functions $p\left(  \rho\right)  =\rho^{\gamma}$ for any
$\gamma>1$, improving upon previous preliminary results obtained in
\cite{PlotnikovSokolowski2007} where the total mass condition $\left(
\text{\ref{mass_of_fluid}}\right)  $ was replaced by $\int_{\Omega}\rho\left(
x\right)  d\left(  x\right)  ^{-s}dx=M$ where $d\left(  x\right)  $ is the
distance from $x$ to the boundary of the domain or
\cite{FrehseSteinhauerWeigant2012} where the Dirichlet problem was solved for
$\gamma>4/3$. We also mention results dealing with the relaxation of the
conditions for the regularity of the boundary \cite{NoNo2005} or the case or
non-compact boundaries \cite{NoNoPo}. The problem with the non-penetration
condition $u\cdot n=0$ where $n$ is the unit normal at the boundary along with
slip boundary conditions on the velocity was studied by M.\ Pokorn\'{y} and
P.B. Mucha in \cite{PokornyMucha2008} where they are able to construct
solutions with bounded density $\rho\in L^{\infty}$ in the case $\gamma>3$.
More recently, E. Feireisl and A. Novotn\'{y} \cite{FeireislNovotny2018}
showed the existence of weak solutions for general inflow, outflow boundary
conditions and monotone pressures that become singular near a finite value
$\bar{\rho}$. For a survey on results obtained prior to the year $2003$ one
can consult the book of A. Novotn\'{y} and I. Stra\v{s}kraba \cite{NoSt}. We
emphasize that all the results obtained in the papers previously cited concern
an isotropic linear viscous diffusion:
\[
\mathcal{A}u=\mu\Delta u+(\mu+\lambda)\nabla\mathrm{div}u.
\]
The objective of our paper is to enlarge the choice of the viscous stress
tensors allowing more general diffusion operators of the form
\eqref{diffusion_operator}. Denoting
\begin{equation}
\Delta_{\theta}\overset{not.}{=}\Delta+\theta\partial_{33},
\label{delta_theta}%
\end{equation}
we can easily see that\footnote{We denote by $\eta\ast\Delta u$ respectively
$\xi\ast\nabla\operatorname{div}u$ the $3D$-vector fields with components
$\left(  \eta\ast\Delta u^{1},\eta\ast\Delta u^{2},\eta\ast\Delta
u^{3}\right)  $ respectively $\left(  \xi\ast\partial_{1}\operatorname{div}%
u,\xi\partial_{2}\operatorname{div}u,\xi\partial_{3}\operatorname{div}%
u\right)  $ where $u=\left(  u^{1},u^{2},u^{3}\right)  $.}%
\[
\mathcal{A}u=\mu\Delta_{\theta}u+\left(  \mu+\lambda\right)  \nabla
\operatorname{div}u+\eta\ast\Delta u+\xi\ast\nabla\operatorname{div}u.
\]

\medskip

\noindent\textit{Energy dissipation for the new diffusion with anisotropic
coefficients and nonlocal terms.} Let us now discuss energy dissipation for
the viscous operator that will be crucial to ensure existence of weak
solution. In the following we use the notations%
\begin{equation}
\nabla_{\theta}=\left(  \partial_{1},\partial_{2},(1+\theta)^{\frac{1}{2}%
}\partial_{3}\right)  , \label{grad_theta}%
\end{equation}
and%
\begin{equation}
\operatorname{div}_{\theta}u=\partial_{1}u^{1}+\partial_{2}u^{2}+\left(
1+\theta\right)  \partial_{3}u^{3}. \label{div_theta}%
\end{equation}
We can write\footnote{By $\nabla_{\theta}u$ we denote the $3\times3$ matrix
whose $i^{th}$ column is $\nabla_{\theta}u^{i}$. For two matrices $A,B$, we
denote by $A:B=a_{ij}b_{ij}$.}
\begin{equation}
\left.
\begin{array}
[c]{l}%
\left\langle \mathcal{A}u,u\right\rangle =\frac{1}{2}\mu\Delta_{\theta}\left(
\left\vert u\right\vert ^{2}\right)  -\mu\nabla_{\theta}u:\nabla_{\theta}u\\
\text{ \ \ \ \ \ \ \ \ \ \ \ \ \ \ \ }+\left(  \mu+\lambda\right)
\operatorname{div}\left(  u\operatorname{div}u\right)  -\left(  \mu
+\lambda\right)  (\operatorname{div}u)^{2}\\
\text{ \ \ \ \ \ \ \ \ \ \ \ \ \ \ \ }+\operatorname{div}(\eta\ast\nabla
uu)-\left(  \eta\ast\nabla u\right)  :\nabla u+\operatorname{div}\left(
u\xi\ast\operatorname{div}u\right)  -\xi\ast\operatorname{div}%
u\operatorname{div}u\\
\text{ \ \ \ \ \ \ \ \ \ \ }=\mathcal{B}\left(  u,u\right)  -\mathcal{C}%
\left(  u,u\right)  ,
\end{array}
\right.  \label{Au_scalaire_u}%
\end{equation}
with
\begin{equation}
\left\{
\begin{array}
[c]{l}%
\mathcal{B}\left(  u,u\right)  \overset{def.}{=}\frac{1}{2}\mu\Delta_{\theta
}\left(  \left\vert u\right\vert ^{2}\right)  +\left(  \mu+\lambda\right)
\operatorname{div}\left(  u\operatorname{div}u\right) \\
\text{ \ \ \ \ \ \ \ \ \ \ \ \ \ \ }+\operatorname{div}(\eta\ast\nabla
uu)-\left(  \eta\ast\nabla u\right)  :\nabla u+\operatorname{div}\left(
\left(  \xi\ast\operatorname{div}u\right)  \text{ }u\right)  -\left(  \xi
\ast\operatorname{div}u\right)  \operatorname{div}u,\\
\mathcal{C}\left(  u,u\right)  \overset{def.}{=}\mu\nabla_{\theta}%
u:\nabla_{\theta}u+\left(  \mu+\lambda\right)  (\operatorname{div}u)^{2}.
\end{array}
\right.  \label{definition_B_et_C}%
\end{equation}
Let us observe that if
\[
u^{\varepsilon}\rightarrow u\text{ strongly in }\left(  L^{2}\left(
\mathbb{T}^{3}\right)  \right)  ^{3}\text{ and }\nabla u^{\varepsilon
}\rightharpoonup\nabla u\text{ weakly in }\left(  L^{2}\left(  \mathbb{T}%
^{3}\right)  \right)  ^{9}%
\]
then
\begin{equation}
\mathcal{B}\left(  u^{\varepsilon},u^{\varepsilon}\right)  \rightharpoonup
\mathcal{B}\left(  u,u\right)  \text{ in the sense of distributions,}
\label{convergence_property}%
\end{equation}
a fact that will prove crucial in our analysis. If the first condition on
$\mathcal{A}$ in Hypothesis (H) holds true then, we have that%
\begin{align}
-\int_{\mathbb{T}^{3}}\left\langle \mathcal{A}u,u\right\rangle  &
=\int_{\mathbb{T}^{3}}\left\{  \mu\nabla_{\theta}u:\nabla_{\theta}u+\left(
\mu+\lambda\right)  (\operatorname{div}u)^{2}\right\}  +\int_{\mathbb{T}^{3}%
}\left(  \eta\ast\nabla u\right)  :\nabla u+\int_{\mathbb{T}^{3}}\xi
\ast\operatorname{div}u\operatorname{div}u\nonumber\\
&  \geq\left(  \min\{1,1+\theta\}\mu-\left\Vert \eta\right\Vert _{L^{1}}%
-\frac{1}{3}\left\Vert \xi\right\Vert _{L^{1}}\right)  \int_{\mathbb{T}^{3}%
}\nabla u:\nabla u+\left(  \mu+\lambda\right)  \int_{\mathbb{T}^{3}%
}(\operatorname{div}u)^{2}. \label{ellipticity_of_diffusion_operator_1}%
\end{align}
If the second condition on $\mathcal{A}$ in Hypothesis (H) holds true then, we
have that%
\begin{align}
-\int_{\mathbb{T}^{3}}\left\langle \mathcal{A}u,u\right\rangle  &
=\int_{\mathbb{T}^{3}}\left\{  \mu\nabla_{\theta}u:\nabla_{\theta}u+\left(
\mu+\lambda\right)  (\operatorname{div}u)^{2}\right\}  +\int_{\mathbb{T}^{3}%
}\eta\ast\nabla u:\nabla u+\int_{\mathbb{T}^{3}}\xi\ast\operatorname{div}%
u\operatorname{div}u\nonumber\\
&  \geq\min\{1,1+\theta\}\mu\int_{\mathbb{T}^{3}}\nabla u:\nabla u+\sum
_{k\in\mathbb{Z}^{3}}\sum_{i,j}\hat{\eta}\left(  k\right)  \left\vert
\widehat{\partial_{j}u^{i}}\left(  k\right)  \right\vert ^{2}+\sum
_{k\in\mathbb{Z}^{3}}\hat{\xi}\left(  k\right)  \left\vert \widehat
{\operatorname{div}u}\left(  k\right)  \right\vert ^{2}.
\label{ellipticity_of_diffusion_operator_2}%
\end{align}
Remark finally that
\begin{equation}
\operatorname{div}\mathcal{A}u=\left(  \mu\Delta_{\theta}+\left(  \mu
+\lambda\right)  \Delta\right)  \operatorname{div}u+\Delta\left(  (\eta
+\xi)\ast\operatorname{div}u\right)  , \label{div_Au}%
\end{equation}
and%
\begin{equation}
\operatorname{div}_{\theta}\mathcal{A}u=\Delta_{\theta}(\mu\operatorname{div}%
_{\theta}u+\left(  \mu+\lambda\right)  \operatorname{div}u+\xi\ast
\operatorname{div}u)+\Delta\eta\ast\operatorname{div}_{\theta}u.
\label{div_theta_Au}%
\end{equation}

\section{Nonlinear Weak stability \label{NWS}}

This part of the paper concerns the nonlinear weak stability of the steady
compressible Navier-Stokes system with anisotropic coefficients and nonlocal
terms in the stress tensor. More precisely, we prove

\begin{theorem}
\label{Main2} Let Hypothesis \textbf{(H) } with an external force
$g^{\varepsilon}$ be uniformly satisfied with respect to $\varepsilon$. Let
$\left(  \rho^{\varepsilon},u^{\varepsilon}\right)  _{\varepsilon>0}$ be a
sequence of weak solutions of
\begin{equation}
\left\{
\begin{array}
[c]{l}%
\operatorname{div}\left(  \rho^{\varepsilon}u^{\varepsilon}\right)  =0,\\
\operatorname{div}\left(  \rho^{\varepsilon}u^{\varepsilon}\otimes
u^{\varepsilon}\right)  -\mathcal{A}u^{\varepsilon}+\nabla(\rho^{\varepsilon
})^{\gamma}=g^{\varepsilon},\\
\int_{\mathbb{T}^{3}}\rho^{\varepsilon}\left(  x\right)  dx=M,\text{ }%
\rho^{\varepsilon}\geq0,\\
\int_{\mathbb{T}^{3}}u^{\varepsilon}\left(  x\right)  dx=0,
\end{array}
\right.
\end{equation}
satisfying%
\begin{equation}
\left\Vert \rho^{\varepsilon}\right\Vert _{L^{3\left(  \gamma-1\right)
}(\mathbb{T}^{3})}+\left\Vert \nabla u^{\varepsilon}\right\Vert _{L^{\frac
{3\left(  \gamma-1\right)  }{\gamma}}(\mathbb{T}^{3})}\leq C,
\end{equation}
where $C>0$ is a constant independent of $\varepsilon$. Then, there exists
$\left(  \rho,u\right)  \in L^{3\left(  \gamma-1\right)  }(\mathbb{T}%
^{3})\times(L^{\frac{3\left(  \gamma-1\right)  }{\gamma}})^{3}(\mathbb{T}%
^{3})$ such that up to a subsequence%
\[
\left\{
\begin{array}
[c]{l}%
\rho^{\varepsilon}\rightharpoonup\rho\text{ weakly in }L^{3\left(
\gamma-1\right)  }\left(  \mathbb{T}^{3}\right)  ,\\
\rho^{\varepsilon}\rightarrow\rho\text{ in }L^{r}\left(  \mathbb{T}%
^{3}\right)  \text{ for all }r\in\lbrack1,3(\gamma-1)),\\
u^{\varepsilon}\rightarrow u\text{ in }L^{r}\left(  \mathbb{T}^{3}\right)
\text{ for all }r\in\lbrack1,3(\gamma-1)),\\
\nabla u^{\varepsilon}\rightharpoonup\nabla u\text{ weakly in }(L^{\frac
{3\left(  \gamma-1\right)  }{\gamma}}\left(  \mathbb{T}^{3}\right)  )^{9},\\
\nabla u^{\varepsilon}\rightarrow\nabla u\text{ strongly in }(L^{r}\left(
\mathbb{T}^{3}\right)  )^{9}\text{ for all }r\in\lbrack1,\frac{3\left(
\gamma-1\right)  }{\gamma}),
\end{array}
\right.
\]
with $\left(  \rho,u\right)  $ a weak solution of the stationary compressible
system \eqref{Stationary_Stokes}--\eqref{Stationary_Stokes_extra}.
\end{theorem}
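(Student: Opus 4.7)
My plan is to pass to the limit in every term of the system; all linear contributions and the convective term pass by standard weak compactness and Rellich--Kondrachov on the torus, so the only genuine obstruction is the nonlinearity $(\rho^\varepsilon)^\gamma$, for which I need the anisotropic substitute for P.-L.~Lions' effective viscous flux introduced in \cite{BrBu}. First I extract subsequences: $\rho^\varepsilon \rightharpoonup \rho$ in $L^{3(\gamma-1)}$, $(\rho^\varepsilon)^\gamma \rightharpoonup \overline{\rho^\gamma}$ in $L^{3(\gamma-1)/\gamma}$, and $\nabla u^\varepsilon \rightharpoonup \nabla u$ in $L^{3(\gamma-1)/\gamma}$. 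Because $\gamma > 3$ makes the Sobolev conjugate of $3(\gamma-1)/\gamma$ strictly bigger than $2$, Rellich--Kondrachov yields $u^\varepsilon \to u$ strongly in every $L^r$ below the Sobolev exponent; the continuity equation, the convective term $\rho^\varepsilon u^\varepsilon \otimes u^\varepsilon$, and the linear diffusion $\mathcal{A}u^\varepsilon$ all pass to the limit, and I obtain that $(\rho,u)$ satisfies $\operatorname{div}(\rho u \otimes u) - \mathcal{A}u + \nabla \overline{\rho^\gamma} = g$. The proof reduces to identifying $\overline{\rho^\gamma}$ with $\rho^\gamma$.

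The central step is the derivation of a distributional identity substituting Lions' effective flux. Testing the $\varepsilon$-momentum equation against $u^\varepsilon\varphi$ for a smooth $\varphi$, using $\operatorname{div}(\rho^\varepsilon u^\varepsilon)=0$ to rewrite the convective contribution as $-\tfrac{1}{2}\int \rho^\varepsilon |u^\varepsilon|^2 u^\varepsilon \cdot \nabla\varphi$, and inserting the pointwise decomposition $\langle \mathcal{A}u^\varepsilon, u^\varepsilon\rangle = \mathcal{B}(u^\varepsilon,u^\varepsilon) - \mathcal{C}(u^\varepsilon,u^\varepsilon)$ from \eqref{Au_scalaire_u}--\eqref{definition_B_et_C}, I obtain after rearrangement
\[
\int \mathcal{C}(u^\varepsilon, u^\varepsilon)\,\varphi \;-\; \int (\rho^\varepsilon)^\gamma \operatorname{div} u^\varepsilon \, \varphi \;=\; \mathcal{R}^\varepsilon[\varphi],
\]
where $\mathcal{R}^\varepsilon[\varphi]$ collects the four terms $\int \mathcal{B}(u^\varepsilon,u^\varepsilon)\varphi$, $\int (\rho^\varepsilon)^\gamma u^\varepsilon\cdot\nabla\varphi$, $\tfrac{1}{2}\int \rho^\varepsilon |u^\varepsilon|^2 u^\varepsilon\cdot\nabla\varphi$, and $\int g^\varepsilon\cdot u^\varepsilon\varphi$. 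The $\mathcal{B}$-term passes to the limit by \eqref{convergence_property}, while each of the others pairs a weakly-convergent factor against a strongly-convergent one. Performing the same computation on the limit equation against $u\varphi$ and subtracting produces the key identity
\[
\overline{\rho^\gamma \operatorname{div} u} \;-\; \overline{\rho^\gamma}\, \operatorname{div} u \;=\; \overline{\mathcal{C}(u,u)} \;-\; \mathcal{C}(u,u) \qquad \text{in } \mathcal{D}'(\mathbb{T}^3),
\]
whose right-hand side is a non-negative defect measure by weak lower-semicontinuity of the quadratic form $\mathcal{C}$.

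To close, the integrability $\gamma > 3$ legitimizes the DiPerna--Lions renormalization of the continuity equation for both the sequence and the limit. Choosing $b(s)=s^\gamma$ yields $\int \overline{\rho^\gamma \operatorname{div} u} = 0 = \int \rho^\gamma \operatorname{div} u$, and combining this with the key identity and a Feireisl-type oscillation defect measure built from the truncations $T_k(\rho^\varepsilon)$ forces both the $\mathcal{C}$-defect and the pressure defect to vanish. The vanishing of $\overline{\mathcal{C}(u,u)} - \mathcal{C}(u,u)$, together with the coercivity \eqref{ellipticity_of_diffusion_operator_1}--\eqref{ellipticity_of_diffusion_operator_2}, yields $\nabla u^\varepsilon \to \nabla u$ strongly in $L^2_{\mathrm{loc}}$ and in $L^r$ for every $r < 3(\gamma-1)/\gamma$ by interpolation; the vanishing of the pressure defect combined with this strong convergence produces $\rho^\varepsilon \to \rho$ strongly in $L^r$ for every $r < 3(\gamma-1)$ and the identification $\overline{\rho^\gamma} = \rho^\gamma$. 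The delicate point is the key identity above: for the anisotropic and nonlocal $\mathcal{A}$ there is no directly compact effective flux $(2\mu+\lambda)\operatorname{div} u - \rho^\gamma$ as in the classical case, and the identity of \cite{BrBu} is the only available substitute; verifying \eqref{convergence_property} for the nonlocal contributions, which is where the hypothesis $\nabla\eta, \nabla\xi \in L^2$ of~(H) becomes essential, is the most technically demanding moment of the argument.
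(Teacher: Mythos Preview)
Your setup through the key identity is correct and in fact coincides with the paper's Proposition~\ref{Identity_1}: your relation
\[
\overline{\rho^\gamma \operatorname{div} u}\;-\;\overline{\rho^\gamma}\,\operatorname{div} u \;=\; \overline{\mathcal{C}(u,u)}\;-\;\mathcal{C}(u,u)
\]
is algebraically equivalent to the paper's identity~\eqref{Identity} once one substitutes $\operatorname{div}(\overline{\rho^\gamma}u)=-(\gamma-1)\,\overline{\rho^\gamma\operatorname{div} u}$ and $\operatorname{div}(\rho^\gamma u)=-(\gamma-1)\,\rho^\gamma\operatorname{div} u$. The gap is in your closing paragraph.

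Integrating the identity and using $\int\overline{\rho^\gamma\operatorname{div} u}=0$ only yields $\int(\overline{\mathcal{C}}-\mathcal{C})=-\int\overline{\rho^\gamma}\,\operatorname{div} u$, and you have no sign control on the right-hand side: $\overline{\rho^\gamma}$ does \emph{not} satisfy a renormalized transport equation with velocity $u$, so $\int\overline{\rho^\gamma}\,\operatorname{div} u$ need not vanish. Your appeal to a ``Feireisl-type oscillation defect measure built from truncations $T_k(\rho^\varepsilon)$'' does not close this: Feireisl's machinery rests on the effective-flux commutation $\overline{T_k(\rho)\,F}=\overline{T_k(\rho)}\,F$, which requires compactness of $F=(2\mu+\lambda)\operatorname{div} u-\rho^\gamma$. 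In the anisotropic/nonlocal case that compactness fails, exactly as the paper explains in the discussion around \eqref{flux_ani_1}--\eqref{flux_ani_2}; this is the whole point of the problem.

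The paper's route is genuinely different at this step. It \emph{renormalizes the defect equation itself}: after mollification one multiplies \eqref{Identity} by $\frac{1}{\gamma}(\overline{\rho^\gamma}-\rho^\gamma+h)^{\frac{1}{\gamma}-1}$ and obtains
\[
\operatorname{div}\Bigl(u\,(\overline{\rho^\gamma}-\rho^\gamma+h)^{1/\gamma}\Bigr)
\;=\;-\tfrac{\gamma-1}{\gamma}\bigl(\overline{\mathcal{C}}-\mathcal{C}\bigr)\,(\overline{\rho^\gamma}-\rho^\gamma+h)^{\frac{1}{\gamma}-1}
\;+\;h\,(\overline{\rho^\gamma}-\rho^\gamma+h)^{\frac{1}{\gamma}-1}\operatorname{div} u.
\]
Integrating and letting $h\to 0$ on the sets $\{\overline{\rho^\gamma}=\rho^\gamma\}$ and $\{\overline{\rho^\gamma}\geq\rho^\gamma+1/n\}$ separately (Proposition~\ref{Convergenta_gradient}) forces $\overline{\mathcal{C}}=\mathcal{C}$ a.e.\ and hence the strong convergence of $\nabla u^\varepsilon$. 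Only \emph{after} this does the paper invoke an anisotropic effective flux: applying $\operatorname{div}_\theta$ to the momentum equation shows that $\mu\operatorname{div}_\theta u^\varepsilon+(\mu+\lambda)\operatorname{div} u^\varepsilon+\xi\ast\operatorname{div} u^\varepsilon-(\rho^\varepsilon)^\gamma$ is compact in $W^{1,\frac{3(\gamma-1)}{2\gamma-1}}$, and together with the now-available strong convergence of $\nabla u^\varepsilon$ this yields $\overline{\rho^{\gamma+1}}=\rho\,\overline{\rho^\gamma}$, whence $\overline{\rho^\gamma}=\rho^\gamma$. Both ingredients --- the renormalization by the $(\tfrac{1}{\gamma}-1)$-power of the defect, and the \emph{a posteriori} use of the $\operatorname{div}_\theta$-flux --- are absent from your outline and are precisely what replaces the Lions/Feireisl argument in the anisotropic setting.
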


The proof of Theorem \ref{Main2} is rather non-standard in the context of
problems coming from compressible fluid mechanics: we are able to prove that
the sequence of velocity gradients converges strongly and recover a posteriori
compactness properties of the equivalent anisotropic effective-flux. The main
ingredient is the identity
\begin{equation}
\operatorname{div}\left(  \left(  \overline{\rho^{\gamma}}-\rho^{\gamma
}\right)  ^{\frac{1}{\gamma}}\right)  +\left(  \overline{\mathcal{C}\left(
u,u\right)  }-\mathcal{C}\left(  u,u\right)  \right)  \left(  \overline
{\rho^{\gamma}}-\rho^{\gamma}\right)  ^{\frac{1}{\gamma}-1}=0,
\label{relation}%
\end{equation}
where $\overline{\rho^{\gamma}}=\lim_{\varepsilon}\left(  \rho^{\varepsilon
}\right)  ^{\gamma}$, $\overline{\mathcal{C}\left(  u,u\right)  }%
=\lim_{\varepsilon}\mathcal{C}\left(  u^{\varepsilon},u^{\varepsilon}\right)
$ where $\mathcal{C}\left(  u,u\right)  $ is defined in $\left(
\text{\ref{definition_B_et_C}}\right)  $. As usually in PDEs, a stability
property is the first important step before proving the existence of weak
solutions. This is also the case in the present situation where it turns out
that we can adapt the arguments used in Theorem \ref{Main2} in order to obtain
an existence result. This will be the subject of Section \ref{TE}. Let us
mention that the above identity is very sensitive to the specific form of the
pressure as a power function. We are not able to treat the case of more
general monotone pressure laws.

\textit{Proof of Theorem \ref{Main2}.} Consider $\left(  \rho^{\varepsilon
},u^{\varepsilon}\right)  _{\varepsilon>0}$ a sequence verifying
\begin{equation}
\left\{
\begin{array}
[c]{l}%
\operatorname{div}\left(  \rho^{\varepsilon}u^{\varepsilon}\right)  =0,\\
\operatorname{div}\left(  \rho^{\varepsilon}u^{\varepsilon}\otimes
u^{\varepsilon}\right)  -\mathcal{A}u^{\varepsilon}+\nabla(\rho^{\varepsilon
})^{\gamma}=g^{\varepsilon},\\%
%TCIMACRO{\dint _{\mathbb{T}^{3}}}%
%BeginExpansion
{\displaystyle\int_{\mathbb{T}^{3}}}
%EndExpansion
\rho^{\varepsilon}\left(  x\right)  dx=M,\text{ }%
%TCIMACRO{\dint _{\mathbb{T}^{3}}}%
%BeginExpansion
{\displaystyle\int_{\mathbb{T}^{3}}}
%EndExpansion
u^{\varepsilon}\left(  x\right)  dx=0,\text{ }\rho^{\varepsilon}\geq0,
\end{array}
\right.  \label{system}%
\end{equation}
along with the following estimates%
\begin{equation}
\left\Vert \rho^{\varepsilon}\right\Vert _{L^{3\left(  \gamma-1\right)
}(\mathbb{T}^{3})}+\left\Vert \nabla u^{\varepsilon}\right\Vert _{L^{\frac
{3\left(  \gamma-1\right)  }{\gamma}}(\mathbb{T}^{3})}\leq C,
\label{uniform_bounds}%
\end{equation}
where $C$ is independent of $\varepsilon$. Classical functional analysis
results allow us to get the existence of functions $\left(  \rho
,u,\overline{\rho^{\gamma}},\overline{\mathcal{C}\left(  u,u\right)  }\right)
$ such that up to a subsequence%
\begin{equation}
\left\{
\begin{array}
[c]{l}%
\rho^{\varepsilon}\rightharpoonup\rho\text{ weakly\ in }L^{3(\gamma-1)}\left(
\mathbb{T}^{3}\right)  ,\\
(\rho^{\varepsilon})^{\gamma}\rightharpoonup\overline{\rho^{\gamma}%
}\text{\ weakly in }L^{\frac{3\left(  \gamma-1\right)  }{\gamma}}\left(
\mathbb{T}^{3}\right)  ,\\
\nabla u^{\varepsilon}\rightharpoonup\nabla u\text{ weakly in }L^{\frac
{3\left(  \gamma-1\right)  }{\gamma}}\left(  \mathbb{T}^{3}\right)  ,\\
\mathcal{C}\left(  u^{\varepsilon},u^{\varepsilon}\right)  \rightharpoonup
\overline{\mathcal{C}\left(  u,u\right)  }\text{ weakly in }L^{\frac{3\left(
\gamma-1\right)  }{2\gamma}}\left(  \mathbb{T}^{3}\right)  ,\\
u^{\varepsilon}\rightarrow u\text{ strongly in }L^{q}\left(  \mathbb{T}%
^{3}\right)  \text{ for any }1\leq q<3\left(  \gamma-1\right)  .
\end{array}
\right.  \label{convergence_properties}%
\end{equation}
We deduce that%
\begin{equation}
\left\{
\begin{array}
[c]{l}%
\operatorname{div}\left(  \rho u\right)  =0,\\
\operatorname{div}\left(  \rho u\otimes u\right)  -\mathcal{A}u+\nabla
\overline{\rho^{\gamma}}=g,\\%
%TCIMACRO{\dint _{\mathbb{T}^{3}}}%
%BeginExpansion
{\displaystyle\int_{\mathbb{T}^{3}}}
%EndExpansion
\rho\left(  x\right)  dx=M,%
%TCIMACRO{\dint _{\mathbb{T}^{3}}}%
%BeginExpansion
{\displaystyle\int_{\mathbb{T}^{3}}}
%EndExpansion
u\left(  x\right)  dx=0,\text{ }\rho^{\varepsilon}\geq0.
\end{array}
\right.  \label{limit_before_identification}%
\end{equation}
The more delicate problem is to be able to identify $\overline{\rho^{\gamma}}$
with $\rho^{\gamma}$. Let us explain the main ideas concerning the
identification of the pressure in the isotropic case and then in the
anisotropic case.

\medskip\noindent\textit{Identification of the pressure in the isotropic case}

Let us briefly sketch the idea behind P.L. Lions's proof in the case when
$\theta=\lambda=0$ and $\eta=\xi=0$, when the system reduces to%
\begin{equation}
\left\{
\begin{array}
[c]{l}%
\operatorname{div}\left(  \rho^{\varepsilon}u^{\varepsilon}\right)  =0,\\
\operatorname{div}\left(  \rho^{\varepsilon}u^{\varepsilon}\otimes
u^{\varepsilon}\right)  -\mu\Delta u^{\varepsilon}+\nabla(\rho^{\varepsilon
})^{\gamma}=g^{\varepsilon},\\%
%TCIMACRO{\dint _{\mathbb{T}^{3}}}%
%BeginExpansion
{\displaystyle\int_{\mathbb{T}^{3}}}
%EndExpansion
\rho^{\varepsilon}\left(  x\right)  dx=M,\text{ }%
%TCIMACRO{\dint _{\mathbb{T}^{3}}}%
%BeginExpansion
{\displaystyle\int_{\mathbb{T}^{3}}}
%EndExpansion
u^{\varepsilon}\left(  x\right)  dx=0,\text{ }\rho^{\varepsilon}\geq0.
\end{array}
\right.  \label{isotropic}%
\end{equation}
As we allready mentioned in the introduction, there are two important points:
first the regularity of the \textit{effective flux} defined as%
\begin{equation}
F^{\varepsilon}\overset{def.}{=}\mu\operatorname{div}u^{\varepsilon}%
-(\rho^{\varepsilon})^{\gamma}. \label{flux}%
\end{equation}
Indeed, applying the divergence operator in the momentum equation gives us%
\[
-\Delta F^{\varepsilon}=-\operatorname{div}\left(  \rho^{\varepsilon
}u^{\varepsilon}\cdot\nabla u^{\varepsilon}\right)  +\operatorname{div}%
g^{\varepsilon}.
\]
Thus $(\nabla F^{\varepsilon})_{\varepsilon>0}$ is uniformly bounded in
$W^{1,\frac{3\left(  \gamma-1\right)  }{2\gamma-1}}\left(  \mathbb{T}%
^{3}\right)  $ and owing to the Rellich-Kondrachov theorem we obtain that%
\[
b\left(  \rho^{\varepsilon}\right)  \cdot F^{\varepsilon}\rightharpoonup
\overline{b\left(  \rho\right)  }\cdot F\text{ weakly in }L^{1}\left(
\mathbb{T}^{3}\right)  \text{,}%
\]
for any continuous $b$ verifying some growth properties in $0$ and at infinity
where $b\left(  \rho^{\varepsilon}\right)  \rightharpoonup\overline{b\left(
\rho\right)  }$. The second part of the proof makes a clever use of the above
identify. More precisely, fix a $\theta\in]0,1[$. Owing to Proposition
\ref{Prop_ren2} we get that%
\[
\operatorname{div}\left(  (\rho^{\varepsilon})^{\theta}u^{\varepsilon}\right)
+\left(  \theta-1\right)  (\rho^{\varepsilon})^{\theta}\operatorname{div}%
u^{\varepsilon}=0,
\]
which rewrites as%
\begin{align*}
&  \mu\operatorname{div}\left(  (\rho^{\varepsilon})^{\theta}u^{\varepsilon
}\right)  +\left(  \theta-1\right)  (\rho^{\varepsilon})^{\theta}%
(\mu\operatorname{div}u^{\varepsilon}-(\rho^{\varepsilon})^{\gamma})+\left(
\theta-1\right)  (\rho^{\varepsilon})^{\theta+\gamma}\\
&  =\mu\operatorname{div}\left(  (\rho^{\varepsilon})^{\theta}u^{\varepsilon
}\right)  +\left(  \theta-1\right)  (\rho^{\varepsilon})^{\theta
}F^{\varepsilon}+\left(  \theta-1\right)  (\rho^{\varepsilon})^{\theta+\gamma
}=0,
\end{align*}
such that passing to the limit yields%
\[
\mu\operatorname{div}\left(  \overline{\rho^{\theta}}u\right)  +\left(
\theta-1\right)  \overline{\rho^{\theta}}(\mu\operatorname{div}u-\overline
{\rho^{\gamma}})+\left(  \theta-1\right)  \overline{\rho^{\theta+\gamma}}=0.
\]
Using once more Proposition \ref{Prop_ren2} we get that%
\[
\mu\operatorname{div}\left(  \overline{\rho^{\theta}}^{\frac{1}{\theta}%
}u\right)  =\left(  \frac{1}{\theta}-1\right)  \left(  \overline{\rho
^{\theta+\gamma}}-\overline{\rho^{\theta}}\overline{\rho^{\gamma}}\right)
\overline{\rho^{\theta}}^{\frac{1}{\theta}-1}.
\]
But by integration we get that%
\begin{equation}
\int_{\mathbb{T}^{3}}\left(  \overline{\rho^{\theta+\gamma}}-\overline
{\rho^{\theta}}\overline{\rho^{\gamma}}\right)  \overline{\rho^{\theta}%
}^{\frac{1}{\theta}-1}=0, \label{compacteness_iso}%
\end{equation}
which, by the positivity of the integrand implies that%
\[
\overline{\rho^{\theta+\gamma}}-\overline{\rho^{\theta}}\overline{\rho
^{\gamma}}=0,
\]
which implies by monotone operator theory that $\rho=\overline{\rho^{\gamma}%
}^{\frac{1}{\gamma}}$.

\medskip\noindent\textit{Identification of the pressure in the anisotropic
case.}

The change of the algebraic structure of the effective flux in the anisotropic
will make it impossible to adapt in a trivial manner the above approach. In
order to highlight the differences with the isotropic case, in the following
lines we continue our discussion for the case when $\theta>-1$, $\theta
\not =0$, $\lambda=0$ and $\eta=\xi=0$. There are two ways one can think of
the anisotropic-effective flux. First, as explained in \cite{BrJa}, we just
take the divergence of the momentum equation and to write it as%
\[
-\Delta_{\theta}\left(  \mu\operatorname{div}u^{\varepsilon}-\left(
\int_{\mathbb{T}^{3}}(\rho^{\varepsilon})^{\gamma}+\Delta_{\theta}^{-1}%
\Delta\left(  (\rho^{\varepsilon})^{\gamma}-\int_{\mathbb{T}^{3}}%
(\rho^{\varepsilon})^{\gamma}\right)  \right)  \right)  =\operatorname{div}%
g^{\varepsilon}+\operatorname{div}\left(  \rho^{\varepsilon}u^{\varepsilon
}\cdot\nabla u^{\varepsilon}\right)
\]
and to try to mimic the proof in the isotropic case using%
\begin{equation}
F_{an}^{\varepsilon}=\mu\operatorname{div}u^{\varepsilon}-\left(
\int_{\mathbb{T}^{3}}(\rho^{\varepsilon})^{\gamma}+\Delta_{\theta}^{-1}%
\Delta\left(  (\rho^{\varepsilon})^{\gamma}-\int_{\mathbb{T}^{3}}%
(\rho^{\varepsilon})^{\gamma}\right)  \right)  , \label{flux_ani_1}%
\end{equation}
as an effective flux (of course when $\theta=0$, $F_{an}^{\varepsilon}$
coincides with $F^{\varepsilon}$ defined in $\left(  \text{\ref{flux}}\right)
$). This fails because we do not control the sign of
\[
\overline{\rho^{\theta}\left(  \int_{\mathbb{T}^{3}}\rho^{\gamma}%
+\Delta_{\theta}^{-1}\Delta\left(  (\rho^{\gamma}-\int_{\mathbb{T}^{3}}%
\rho^{\gamma}\right)  \right)  }-\overline{\rho^{\theta}}\overline{\left(
\int_{\mathbb{T}^{3}}\rho^{\gamma}+\Delta_{\theta}^{-1}\Delta\left(
(\rho^{\gamma}-\int_{\mathbb{T}^{3}}\rho^{\gamma}\right)  \right)  },
\]
as we do when $\theta=0$. Thus, in this case the equivalent of $\left(
\text{\ref{compacteness_iso}}\right)  $ is of no use for the identification of
$\rho^{\gamma}$ with $\overline{\rho^{\gamma}}$. Secondly, we could apply
$\operatorname{div}_{\theta}$, defined in $\left(  \text{\ref{div_theta}%
}\right)  $, in the momentum equation in order to obtain
\[
-\Delta_{\theta}\left(  \mu\operatorname{div}_{\theta}u^{\varepsilon}%
-(\rho^{\varepsilon})^{\gamma}\right)  =-\operatorname{div}_{\theta}\left(
\rho^{\varepsilon}u^{\varepsilon}\cdot\nabla u^{\varepsilon}\right)
+\operatorname{div}_{\theta}g^{\varepsilon},
\]
which yields compactness for the anisotropic effective-flux
\begin{equation}
\tilde{F}_{an}^{\varepsilon}=\mu\operatorname{div}_{\theta}u^{\varepsilon
}-(\rho^{\varepsilon})^{\gamma}. \label{flux_ani_2}%
\end{equation}
The problem is that this new quantity does not appear in the transport
equation such that we cannot use it in order to replace $\overline
{\rho^{\theta}\operatorname{div}u}$ with a more appropriate formula (unless,
of course, we would have more information on $\partial_{3}u^{3}$ which is not
the case). \smallskip The key ingredient in the proof of Theorem \ref{Main2}
turns out to be the fact that we can recover compactness properties for the
gradient of the velocity. In order to achieve this we have to use the
renormalized stationary transport equation and to also take into account the
momentum equation. More precisely the following proposition holds true:

\begin{proposition}
\label{Identity_1} Under the hypothesis of Theorem \ref{Main2}, the following
identity%
\begin{equation}
\frac{1}{\gamma-1}\operatorname{div}\left(  u\left(  \overline{\rho^{\gamma}%
}-\rho^{\gamma}\right)  \right)  +\left(  \overline{\rho^{\gamma}}%
-\rho^{\gamma}\right)  \operatorname{div}u+\overline{\mathcal{C}\left(
u,u\right)  }-\mathcal{C}\left(  u,u\right)  =0, \label{Identity}%
\end{equation}
holds true in the sense of distributions.
\end{proposition}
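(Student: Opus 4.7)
The plan is to combine two energy identities---one obtained by pairing the $\varepsilon$-momentum equation with $u^{\varepsilon}$ and passing to the distributional limit, the other obtained by pairing the limit momentum equation of $(\ref{limit_before_identification})$ directly with $u$---and to close the resulting difference by invoking the renormalised continuity equation applied at both the $\varepsilon$-level and the limit level. The hypothesis $\gamma>3$ will be consumed essentially in this latter step.

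Concretely, I test the momentum equation in $(\ref{system})$ scalarly by $u^{\varepsilon}$. Using $\operatorname{div}(\rho^{\varepsilon}u^{\varepsilon})=0$ the convective term becomes $\operatorname{div}\bigl(\tfrac{1}{2}\rho^{\varepsilon}u^{\varepsilon}|u^{\varepsilon}|^{2}\bigr)$, the decomposition $(\ref{Au_scalaire_u})$ applied to $\langle\mathcal{A}u^{\varepsilon},u^{\varepsilon}\rangle$ produces $\mathcal{C}(u^{\varepsilon},u^{\varepsilon})-\mathcal{B}(u^{\varepsilon},u^{\varepsilon})$, and the pressure work is split as $\nabla(\rho^{\varepsilon})^{\gamma}\cdot u^{\varepsilon}=\operatorname{div}((\rho^{\varepsilon})^{\gamma}u^{\varepsilon})-(\rho^{\varepsilon})^{\gamma}\operatorname{div}u^{\varepsilon}$. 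Passage to the distributional limit is legitimate thanks to the strong convergence $u^{\varepsilon}\to u$ in $L^{q}$ for every $q<3(\gamma-1)$, the weak convergences listed in $(\ref{convergence_properties})$, and crucially the property $(\ref{convergence_property})$ giving $\mathcal{B}(u^{\varepsilon},u^{\varepsilon})\rightharpoonup\mathcal{B}(u,u)$; writing $\overline{\rho^{\gamma}\operatorname{div}u}$ for the weak limit of $(\rho^{\varepsilon})^{\gamma}\operatorname{div}u^{\varepsilon}$, one obtains
\begin{equation*}
\operatorname{div}\bigl(\tfrac{1}{2}\rho u|u|^{2}\bigr)+\overline{\mathcal{C}(u,u)}-\mathcal{B}(u,u)+\operatorname{div}\bigl(\overline{\rho^{\gamma}}u\bigr)-\overline{\rho^{\gamma}\operatorname{div}u}=g\cdot u.
\end{equation*}
Repeating the same manipulations on the momentum equation of $(\ref{limit_before_identification})$, where $\overline{\rho^{\gamma}}$ now plays the role of the pressure, produces
\begin{equation*}
\operatorname{div}\bigl(\tfrac{1}{2}\rho u|u|^{2}\bigr)+\mathcal{C}(u,u)-\mathcal{B}(u,u)+\operatorname{div}\bigl(\overline{\rho^{\gamma}}u\bigr)-\overline{\rho^{\gamma}}\operatorname{div}u=g\cdot u,
\end{equation*}
and subtracting the two displays cancels every common term, leaving
\begin{equation*}
\overline{\mathcal{C}(u,u)}-\mathcal{C}(u,u)=\overline{\rho^{\gamma}\operatorname{div}u}-\overline{\rho^{\gamma}}\operatorname{div}u.
\end{equation*}

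To rewrite the right hand side I invoke the renormalised continuity equation with $b(s)=s^{\gamma}$ at both levels. At the $\varepsilon$-level Proposition \ref{Prop_ren2} yields $\operatorname{div}((\rho^{\varepsilon})^{\gamma}u^{\varepsilon})+(\gamma-1)(\rho^{\varepsilon})^{\gamma}\operatorname{div}u^{\varepsilon}=0$, and taking the weak limit gives $\overline{\rho^{\gamma}\operatorname{div}u}=-\tfrac{1}{\gamma-1}\operatorname{div}(\overline{\rho^{\gamma}}u)$. Applied to the limit pair $(\rho,u)$ itself, the same proposition delivers $\rho^{\gamma}\operatorname{div}u=-\tfrac{1}{\gamma-1}\operatorname{div}(\rho^{\gamma}u)$. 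Substituting both expressions in the previous identity and grouping yields
\begin{equation*}
\overline{\mathcal{C}(u,u)}-\mathcal{C}(u,u)=-\tfrac{1}{\gamma-1}\operatorname{div}\bigl((\overline{\rho^{\gamma}}-\rho^{\gamma})u\bigr)-(\overline{\rho^{\gamma}}-\rho^{\gamma})\operatorname{div}u,
\end{equation*}
which is precisely $(\ref{Identity})$ after rearrangement. I expect the decisive obstacle to be this closing step: applying Proposition \ref{Prop_ren2} to the limit pair requires $u\in W^{1,p}$ with $p\geq 2$, and this is barely ensured by the hypothesis $\gamma>3$, which yields $\nabla u\in L^{3(\gamma-1)/\gamma}$ with $3(\gamma-1)/\gamma>2$. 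The secondary technical point is to justify that the cubic term $\rho^{\varepsilon}u^{\varepsilon}|u^{\varepsilon}|^{2}$ together with the products $(\rho^{\varepsilon})^{\gamma}u^{\varepsilon}$ and $(\rho^{\varepsilon})^{\gamma}\operatorname{div}u^{\varepsilon}$ converge in $\mathcal{D}'$ to their expected limits, but this is a routine consequence of $(\ref{uniform_bounds})$ combined with the strong convergence of $u^{\varepsilon}$.
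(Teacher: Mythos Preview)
Your proof is correct and follows essentially the same route as the paper's: test the momentum equation with the velocity, use the decomposition $\langle\mathcal{A}u,u\rangle=\mathcal{B}(u,u)-\mathcal{C}(u,u)$ together with the renormalised continuity equation with $b(s)=s^{\gamma}$ at both the $\varepsilon$-level and the limit level, and subtract. The only organisational difference is that the paper substitutes the renormalised identity $(\rho^{\varepsilon})^{\gamma}\operatorname{div}u^{\varepsilon}=-\tfrac{1}{\gamma-1}\operatorname{div}((\rho^{\varepsilon})^{\gamma}u^{\varepsilon})$ \emph{before} passing to the limit, thereby never introducing the auxiliary object $\overline{\rho^{\gamma}\operatorname{div}u}$, whereas you pass to the limit first and eliminate this quantity afterwards; both orderings are legitimate once you observe that $(\rho^{\varepsilon})^{\gamma}\operatorname{div}u^{\varepsilon}$ is bounded in $L^{3(\gamma-1)/(2\gamma)}$ with exponent strictly larger than $1$ when $\gamma>3$.
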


\begin{proof}
Owing to Proposition \ref{Prop_ren2} we get that
\begin{equation}
\operatorname{div}\left(  (\rho^{\varepsilon})^{\gamma}u^{\varepsilon}\right)
+\left(  \gamma-1\right)  (\rho^{\varepsilon})^{\gamma}\operatorname{div}%
u^{\varepsilon}=0. \label{renorm_eps}%
\end{equation}
The fact that $\left(  \rho^{\varepsilon},u^{\varepsilon}\right)  $ verify the
bounds $\left(  \text{\ref{uniform_bounds}}\right)  $ allows us to extend the
weak formulation of the velocity's equation to test functions $\psi$ for which
$\psi\in(L^{2}\left(  \mathbb{T}^{3}\right)  )^{3},\nabla\psi\in(L^{2}\left(
\mathbb{T}^{3}\right)  )^{9}$. \ Thus, taking $\varphi\in C^{\infty}\left(
\mathbb{T}^{3}\right)  $, may use $\varphi u$ as a test function in the weak
formulation of the velocity's equation and using $\left(
\text{\ref{Au_scalaire_u}}\right)  $ and $\left(  \text{\ref{renorm_eps}%
}\right)  $ we get that%
\[
\operatorname{div}\left(  (\rho^{\varepsilon})^{\gamma}u^{\varepsilon}\right)
=-\frac{\left(  \gamma-1\right)  }{\gamma}\left(  \frac{1}{2}%
\operatorname{div}\left(  \rho^{\varepsilon}u^{\varepsilon}\left\vert
u^{\varepsilon}\right\vert ^{2}\right)  -\mathcal{B}\left(  u^{\varepsilon
},u^{\varepsilon}\right)  +\mathcal{C}\left(  u^{\varepsilon},u^{\varepsilon
}\right)  +u^{\varepsilon}g^{\varepsilon}\right)  ,
\]
where $\mathcal{B},\mathcal{C}$ are defined by $\left(
\text{\ref{definition_B_et_C}}\right)  $. The convergence properties announced
in $\left(  \text{\ref{convergence_properties}}\right)  $ allow us to conclude
that%
\[
\operatorname{div}\left(  \overline{\rho^{\gamma}}u\right)  =-\frac{\left(
\gamma-1\right)  }{\gamma}\left(  \frac{1}{2}\operatorname{div}\left(  \rho
u\left\vert u\right\vert ^{2}\right)  -\mathcal{B}\left(  u,u\right)
+\overline{\mathcal{C}\left(  u,u\right)  }+ug\right)  .
\]
Of course, we can do the same manipulations to $\left(  \rho,u\right)  $ in
order to obtain that%
\begin{align*}
\operatorname{div}\left(  \rho^{\gamma}u\right)   &  =\frac{\left(
\gamma-1\right)  }{\gamma}\left\{  \operatorname{div}\left(  \left(
\overline{\rho^{\gamma}}-\rho^{\gamma}\right)  u\right)  -\left(
\overline{\rho^{\gamma}}-\rho^{\gamma}\right)  \operatorname{div}u\right\} \\
&  -\frac{\left(  \gamma-1\right)  }{\gamma}\left(  \frac{1}{2}%
\operatorname{div}\left(  \rho u\left\vert u\right\vert ^{2}\right)
-\mathcal{B}\left(  u,u\right)  +\mathcal{C}\left(  u,u\right)  +ug\right)  .
\end{align*}
Thus, by taking the difference we get $\left(  \text{\ref{Identity}}\right)  $
which ends the proof.
\end{proof}
Next, we claim that

\begin{proposition}
\label{Convergenta_gradient}Under the hypothesis of Theorem \ref{Main2}, we
have that%
\[
\nabla u^{\varepsilon}\rightarrow\nabla u\text{ strongly in }L^{\frac{3\left(
\gamma-1\right)  }{\gamma}}\left(  \mathbb{T}^{3}\right)  .
\]

\end{proposition}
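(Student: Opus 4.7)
My plan is to extract strong convergence of $\nabla u^{\varepsilon}$ from the identity of Proposition~\ref{Identity_1} by forcing, through a non-negativity sign argument, the vanishing of the defect $\Theta := \overline{\mathcal{C}(u,u)} - \mathcal{C}(u,u)$, and then converting this into genuine strong convergence of the gradient. To set the stage I introduce the companion defect $\Pi := \overline{\rho^{\gamma}}-\rho^{\gamma}$ and note that $\Pi \geq 0$ by convexity of $s\mapsto s^{\gamma}$ on $[0,\infty)$, while $\Theta \geq 0$ since $\mathcal{C}(v,v) = \mu\nabla_{\theta}v:\nabla_{\theta}v + (\mu+\lambda)(\operatorname{div}v)^{2}$ is a non-negative quadratic form in $\nabla v$. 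With these notations, (\ref{Identity}) rewrites, after multiplication by $\gamma-1$ and using $\operatorname{div}(u\Pi) = u\cdot\nabla\Pi + \Pi\operatorname{div}u$, as the transport-like equation
\[
u\cdot\nabla\Pi + \gamma\,\Pi\operatorname{div}u + (\gamma-1)\,\Theta = 0.
\]

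The central algebraic step is to renormalize with $b(s)=s^{1/\gamma}$, exploiting the cancellation $\gamma\,b'(s)\,s = b(s)$ that is specific to this choice. Multiplying the equation above by $b'(\Pi) = \frac{1}{\gamma}\Pi^{1/\gamma-1}$ and reassembling yields (\ref{relation}) in the form
\[
\operatorname{div}\bigl(u\,\Pi^{1/\gamma}\bigr) + \frac{\gamma-1}{\gamma}\,\Pi^{1/\gamma-1}\,\Theta = 0.
\]
Integrating over $\mathbb{T}^{3}$ and using periodicity to discard the divergence term leaves
\[
\int_{\mathbb{T}^{3}}\Pi^{1/\gamma-1}\,\Theta\,dx = 0.
\]
Because both factors are non-negative and $\Pi^{1/\gamma-1}$ is strictly positive on $\{\Pi>0\}$, this forces $\Theta=0$ almost everywhere on $\{\Pi>0\}$; the blow-up of $(\Pi+\delta)^{1/\gamma-1}$ as $\delta\to 0^{+}$, applied to the regularized renormalization with $b_{\delta}(s)=(s+\delta)^{1/\gamma}$, forces the same conclusion on $\{\Pi=0\}$, so that $\Theta=0$ almost everywhere on $\mathbb{T}^{3}$.

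Having secured $\overline{\mathcal{C}(u,u)}=\mathcal{C}(u,u)$ pointwise a.e., I integrate this equality over $\mathbb{T}^{3}$ to obtain convergence of the quadratic energies
\[
\int_{\mathbb{T}^{3}}\mathcal{C}(u^{\varepsilon},u^{\varepsilon})\,dx \longrightarrow \int_{\mathbb{T}^{3}}\mathcal{C}(u,u)\,dx.
\]
Under Hypothesis~(H) one has $\mu>0$, $\mu+\lambda>0$, $\theta>-1$, and $\mathcal{C}(v,v) \geq \min\{1,1+\theta\}\mu\,|\nabla v|^{2}$, so that $\mathcal{C}$ defines a norm equivalent to the $H^{1}$-seminorm on mean-zero fields. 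Combining this norm convergence with the weak convergence $\nabla u^{\varepsilon}\rightharpoonup\nabla u$ in $L^{2}(\mathbb{T}^{3})$ (which is guaranteed since $L^{3(\gamma-1)/\gamma}\hookrightarrow L^{2}$ for $\gamma>3$) yields strong convergence $\nabla u^{\varepsilon}\to\nabla u$ in $L^{2}(\mathbb{T}^{3})$. Extracting an a.e.-convergent subsequence and using the uniform bound in $L^{3(\gamma-1)/\gamma}$, a Vitali / uniform integrability argument upgrades this to strong convergence in $L^{3(\gamma-1)/\gamma}(\mathbb{T}^{3})$, as announced.

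The main obstacle in executing this plan is the rigorous justification of the renormalization step, since $\Pi$ is obtained only as a weak-$L^{3(\gamma-1)/\gamma}$ limit with no a priori Sobolev regularity and the naive chain rule is unavailable. I would circumvent this by working with the smooth regularization $b_{\delta}(s)=(s+\delta)^{1/\gamma}$, deriving
\[
\operatorname{div}\bigl(u\,(\Pi+\delta)^{1/\gamma}\bigr) = \frac{\delta}{(\Pi+\delta)^{(\gamma-1)/\gamma}}\operatorname{div}u - \frac{\gamma-1}{\gamma}(\Pi+\delta)^{1/\gamma-1}\Theta,
\]
integrating, and passing to the limit $\delta\to 0^{+}$ by the uniform bound $\delta/(\Pi+\delta)^{(\gamma-1)/\gamma}\leq \delta^{1/\gamma}$ on the right-hand side and monotone convergence on the left. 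The handling of the singular factor $\Pi^{1/\gamma-1}$ on the vanishing set of $\Pi$ is the subtle point, and it is precisely the improved integrability of $\rho^{\varepsilon}$ coming from the smallness condition (\ref{smallness}) that gives enough room for the product $\Pi^{1/\gamma-1}\Theta$ to be handled as a genuine, well-defined quantity.
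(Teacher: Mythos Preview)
Your approach is essentially the same as the paper's: both renormalize identity~(\ref{Identity}) with $b_h(s)=(s+h)^{1/\gamma}$, integrate over $\mathbb{T}^3$, and use the non-negativity of $\Theta=\overline{\mathcal{C}(u,u)}-\mathcal{C}(u,u)$ and $\Pi=\overline{\rho^\gamma}-\rho^\gamma$ to force $\Theta=0$ separately on $\{\Pi=0\}$ and $\{\Pi>0\}$ by letting $h\to 0$. Two points deserve tightening.

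First, the regularization $b_\delta(s)=(s+\delta)^{1/\gamma}$ only tames the singularity of $b'$ at zero; it does not by itself justify multiplying the distributional identity by $b_\delta'(\Pi)$, since $\Pi$ has no Sobolev regularity and the chain rule is unavailable. The paper closes this gap by first mollifying, setting $\delta_\alpha=\omega_\alpha\ast\Pi$, writing the equation for $\delta_\alpha$ with a DiPerna--Lions commutator remainder $r_\alpha$, multiplying by $\frac{1}{\gamma}(\delta_\alpha+h)^{1/\gamma-1}$ (now legitimate since $\delta_\alpha$ is smooth), and then passing $\alpha\to 0$ via Proposition~\ref{Prop_ren1} before sending $h\to 0$. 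You identified renormalization as the obstacle but proposed only half of the cure.

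Second, your Vitali upgrade to the endpoint $L^{3(\gamma-1)/\gamma}$ is not justified: a uniform bound in $L^p$ does not yield uniform integrability of $|\nabla u^\varepsilon|^p$, only of $|\nabla u^\varepsilon|^r$ for $r<p$. The paper's proof in fact concludes only strong convergence in $L^r$ for $r\in[1,\tfrac{3(\gamma-1)}{\gamma})$, which is all that is needed downstream and is what Theorem~\ref{Main2} actually asserts.
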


\begin{proof}
This will result from the manipulation of the identity proved in Proposition
\ref{Identity_1}. Consider a regularizing kernel $\left(  \omega_{\alpha
}\right)  _{\alpha>0}$ and using $\left(  \text{\ref{Identity}}\right)  $ we
may write that
\[
\operatorname{div}\left(  u\delta_{\alpha}\right)  +\left(  \gamma-1\right)
\delta_{\alpha}\operatorname{div}u=-\left(  \gamma-1\right)  \omega_{\alpha
}\ast\left(  \overline{\mathcal{C}\left(  u,u\right)  }-\mathcal{C}\left(
u,u\right)  \right)  +r_{\alpha}\left(  u,\rho,\overline{\rho^{\gamma}%
}\right)
\]
where%
\[
\left\{
\begin{array}
[c]{l}%
\delta_{\alpha}=\omega_{\alpha}\ast\left(  \overline{\rho^{\gamma}}%
-\rho^{\gamma}\right)  ,\\
r_{\alpha}\left(  u,\rho,\overline{\rho^{\gamma}}\right)  =\operatorname{div}%
\left\{  u\omega_{\alpha}\ast\left(  \overline{\rho^{\gamma}}-\rho^{\gamma
}\right)  -\omega_{\alpha}\ast\left[  \left(  \overline{\rho^{\gamma}}%
-\rho^{\gamma}\right)  u\right]  \right\}  \\
+\left(  \gamma-1\right)  \left\{  \operatorname{div}u\omega_{\alpha}%
\ast\left(  \overline{\rho^{\gamma}}-\rho^{\gamma}\right)  -\omega_{\alpha
}\ast\left[  \left(  \overline{\rho^{\gamma}}-\rho^{\gamma}\right)
\operatorname{div}u\right]  \right\}  .
\end{array}
\right.
\]
Let $h>0$ be a constant and multiply the last equality with $\frac{1}{\gamma
}(\delta_{\alpha}+h)^{\frac{1}{\gamma}-1}$ in order to get that%
\begin{align*}
\operatorname{div}\left(  u(\delta_{\alpha}+h)^{\frac{1}{\gamma}}\right)   &
=-\frac{\gamma-1}{\gamma}(\delta_{\alpha}+h)^{\frac{1}{\gamma}-1}%
\omega_{\alpha}\ast\left(  \overline{\mathcal{C}\left(  u,u\right)
}-\mathcal{C}\left(  u,u\right)  \right)  \\
&  +(\delta_{\alpha}+h)^{\frac{1}{\gamma}-1}h\operatorname{div}u+\frac
{1}{\gamma}(\delta_{\alpha}+h)^{\frac{1}{\gamma}-1}r_{\alpha}\left(
u,\rho,\overline{\rho^{\gamma}}\right)  .
\end{align*}
Using Proposition \ref{Prop_ren1} we see that taking the limit $\alpha
\rightarrow0$ yields%
\begin{align*}
\operatorname{div}\left(  u\left(  \left(  \overline{\rho^{\gamma}}%
-\rho^{\gamma}\right)  +h\right)  ^{\frac{1}{\gamma}}\right)   &
=-\frac{\gamma-1}{\gamma}\left(  \overline{\mathcal{C}\left(  u,u\right)
}-\mathcal{C}\left(  u,u\right)  \right)  \left(  \left(  \overline
{\rho^{\gamma}}-\rho^{\gamma}\right)  +h\right)  ^{\frac{1}{\gamma}-1}\\
&  +\left(  \left(  \overline{\rho^{\gamma}}-\rho^{\gamma}\right)  +h\right)
^{\frac{1}{\gamma}-1}h\operatorname{div}u.
\end{align*}
Integrating the last equation gives us%
\begin{equation}
\int_{\mathbb{T}^{3}}\left(  \overline{\mathcal{C}\left(  u,u\right)
}-\mathcal{C}\left(  u,u\right)  \right)  \left(  \left(  \overline
{\rho^{\gamma}}-\rho^{\gamma}\right)  +h\right)  ^{\frac{1}{\gamma}-1}%
=\frac{\gamma h}{\gamma-1}\int_{\mathbb{T}^{3}}\left(  \left(  \overline
{\rho^{\gamma}}-\rho^{\gamma}\right)  +h\right)  ^{\frac{1}{\gamma}%
-1}\operatorname{div}u,\label{Integrala=0}%
\end{equation}
which can be put under the form%
\begin{align}
&  \int_{\left(  \overline{\rho^{\gamma}}=\rho^{\gamma}\right)  }\left(
\overline{\mathcal{C}\left(  u,u\right)  }-\mathcal{C}\left(  u,u\right)
\right)  +\int_{\left(  \overline{\rho^{\gamma}}\not =\rho^{\gamma}\right)
}\left(  \overline{\mathcal{C}\left(  u,u\right)  }-\mathcal{C}\left(
u,u\right)  \right)  \left(  \frac{h}{\left(  \overline{\rho^{\gamma}}%
-\rho^{\gamma}\right)  +h}\right)  ^{1-\frac{1}{\gamma}}\nonumber\\
&  =\frac{\gamma h}{\gamma-1}\int_{\mathbb{T}^{3}}\left(  \frac{h}{\left(
\overline{\rho^{\gamma}}-\rho^{\gamma}\right)  +h}\right)  ^{1-\frac{1}%
{\gamma}}\operatorname{div}u.\label{Integrala_1}%
\end{align}
Now, using that
\begin{equation}
\left\{
\begin{array}
[c]{l}%
\lim_{h\rightarrow0}\dfrac{h}{\left(  \overline{\rho^{\gamma}}-\rho^{\gamma
}\right)  +h}=0\text{ a.e. on }\left(  \overline{\rho^{\gamma}}\not =%
\rho^{\gamma}\right)  \text{ and }\\
\text{for any }h>0\text{ then }\dfrac{h}{\left(  \overline{\rho^{\gamma}}%
-\rho^{\gamma}\right)  +h}\leq1\text{ a.e. on }\mathbb{T}^{3},
\end{array}
\right.  \label{dominated}%
\end{equation}
we get that%
\[
\int_{\left(  \overline{\rho^{\gamma}}=\rho^{\gamma}\right)  }\left(
\overline{\mathcal{C}\left(  u,u\right)  }-\mathcal{C}\left(  u,u\right)
\right)  =0.
\]
As a consequence we get that%
\begin{equation}
\left(  \overline{\mathcal{C}\left(  u,u\right)  }-\mathcal{C}\left(
u,u\right)  \right)  \text{ a.e. on }\left(  \overline{\rho^{\gamma}}%
=\rho^{\gamma}\right)  .\label{recover_strongconv_on_u_1}%
\end{equation}
Then we see that $\left(  \text{\ref{Integrala_1}}\right)  $ rewrites%
\[
\int_{\left(  \overline{\rho^{\gamma}}\not =\rho^{\gamma}\right)  }\left(
\overline{\mathcal{C}\left(  u,u\right)  }-\mathcal{C}\left(  u,u\right)
\right)  \left(  \frac{h}{\left(  \overline{\rho^{\gamma}}-\rho^{\gamma
}\right)  +h}\right)  ^{1-\frac{1}{\gamma}}=\frac{\gamma h}{\gamma-1}%
\int_{\mathbb{T}^{3}}\left(  \frac{h}{\left(  \overline{\rho^{\gamma}}%
-\rho^{\gamma}\right)  +h}\right)  ^{1-\frac{1}{\gamma}}\operatorname{div}u,
\]
which we put under the form%
\[
\int_{\left(  \overline{\rho^{\gamma}}\not =\rho^{\gamma}\right)  }\left(
\overline{\mathcal{C}\left(  u,u\right)  }-\mathcal{C}\left(  u,u\right)
\right)  \left(  \frac{1}{\left(  \overline{\rho^{\gamma}}-\rho^{\gamma
}\right)  +h}\right)  ^{1-\frac{1}{\gamma}}=\frac{\gamma h^{\frac{1}{\gamma}}%
}{\gamma-1}\int_{\mathbb{T}^{3}}\left(  \frac{h}{\left(  \overline
{\rho^{\gamma}}-\rho^{\gamma}\right)  +h}\right)  ^{1-\frac{1}{\gamma}%
}\operatorname{div}u,
\]
such that using the inequality from $\left(  \text{\ref{dominated}}\right)  $
we get that%
\begin{equation}
\int_{\left(  \overline{\rho^{\gamma}}\not =\rho^{\gamma}\right)  }\left(
\overline{\mathcal{C}\left(  u,u\right)  }-\mathcal{C}\left(  u,u\right)
\right)  \left(  \frac{1}{\left(  \overline{\rho^{\gamma}}-\rho^{\gamma
}\right)  +h}\right)  ^{1-\frac{1}{\gamma}}\leq\frac{\gamma h^{\frac{1}%
{\gamma}}}{\gamma-1}\left\Vert \operatorname{div}u\right\Vert _{L^{1}%
}.\label{inegalite_div}%
\end{equation}
For all $n>0$ we have
\[
\left\{  x:\overline{\rho^{\gamma}}\left(  x\right)  \geq\rho^{\gamma}\left(
x\right)  +1/n\right\}  \subset\left\{  x:\overline{\rho^{\gamma}}\left(
x\right)  \not =\rho^{\gamma}\left(  x\right)  \right\}
\]
and as the integrand from the left hand side of the inequality $\left(
\text{\ref{inegalite_div}}\right)  $ is positive, we get that%
\[
\int_{\left(  \overline{\rho^{\gamma}}\geq\rho^{\gamma}+1/n\right)  }\left(
\overline{\mathcal{C}\left(  u,u\right)  }-\mathcal{C}\left(  u,u\right)
\right)  \left(  \frac{1}{\left(  \overline{\rho^{\gamma}}-\rho^{\gamma
}\right)  +h}\right)  ^{1-\frac{1}{\gamma}}\leq\frac{\gamma h^{\frac{1}%
{\gamma}}}{\gamma-1}\left\Vert \operatorname{div}u\right\Vert _{L^{1}}.
\]
Taking in account that
\begin{equation}
\left\{
\begin{array}
[c]{l}%
\lim_{h\rightarrow0}\dfrac{1}{\left(  \overline{\rho^{\gamma}}-\rho^{\gamma
}\right)  +h}=\dfrac{1}{\left(  \overline{\rho^{\gamma}}-\rho^{\gamma}\right)
}\text{ a.e. on }\left(  \overline{\rho^{\gamma}}\geq\rho^{\gamma}+\frac{1}%
{n}\right)  \text{ and }\\
\left(  \dfrac{1}{\left(  \overline{\rho^{\gamma}}-\rho^{\gamma}\right)
+h}\right)  ^{1-\frac{1}{\gamma}}\leq n^{1-\frac{1}{\gamma}}\text{ a.e. on
}\left(  \overline{\rho^{\gamma}}\geq\rho^{\gamma}+\frac{1}{n}\right)  ,
\end{array}
\right.  \label{dominated2}%
\end{equation}
we get via the dominated convergence theorem that
\[
\int_{\left(  \overline{\rho^{\gamma}}\geq\rho^{\gamma}+1/n\right)  }\left(
\overline{\mathcal{C}\left(  u,u\right)  }-\mathcal{C}\left(  u,u\right)
\right)  \left(  \overline{\rho^{\gamma}}-\rho^{\gamma}\right)  ^{\frac
{1}{\gamma}-1}=0,
\]
which yields
\begin{equation}
\overline{\mathcal{C}\left(  u,u\right)  }-\mathcal{C}\left(  u,u\right)
=0\text{ a.e. on }\left\{  x:\overline{\rho^{\gamma}}\left(  x\right)
\geq\rho^{\gamma}\left(  x\right)  +1/n\right\}
.\label{recover_strongconv_on_u_2}%
\end{equation}
As $n$ is arbitrary we deduce that
\begin{equation}
\overline{\mathcal{C}\left(  u,u\right)  }=\mathcal{C}\left(  u,u\right)
\text{ a.e. on }\left(  \overline{\rho^{\gamma}}>\rho^{\gamma}\right).
\label{egalitate_pe_stricta}%
\end{equation}
Putting together the two relations $\left(
\text{\ref{recover_strongconv_on_u_1}}\right)  $ and $\left(
\text{\ref{egalitate_pe_stricta}}\right)  $ we get that%
\[
\overline{\mathcal{C}\left(  u,u\right)  }-\mathcal{C}\left(  u,u\right)
=0\text{ a.e. on }\mathbb{T}^{3}%
\]
and consequently%
\[
\nabla u^{\varepsilon}\rightarrow\nabla u\text{ in }L^{r}\left(
\mathbb{T}^{3}\right)  ,
\]
for all $r\in\lbrack1,\frac{3\left(  \gamma-1\right)  }{\gamma})$. This
concludes the proof of Proposition \ref{Convergenta_gradient}.
\end{proof}\begin{proof}[End of proof of Theorem \ref{Main2}.] The fact that $(\nabla
u^{\varepsilon})_{\varepsilon>0}$ converges strongly to $\nabla u$ along with
the fact that the anisotropic effective flux is compact will be used to
identify $\overline{\rho^{\gamma}}$ with $\rho^{\gamma}$. Indeed, let us
observe that owing to $\left(  \text{\ref{div_theta_Au}}\right)  $, when
applying $\operatorname{div}_{\theta}$ in the second equation of $\left(
\text{\ref{system}}\right)  $ we obtain that%
\[
-\Delta_{\theta}(\mu\operatorname{div}_{\theta}u^{\varepsilon}+\left(
\mu+\lambda\right)  \operatorname{div}u^{\varepsilon}+\xi\ast
\operatorname{div}u^{\varepsilon}-(\rho^{\varepsilon})^{\gamma})=\Delta
(\eta\ast\operatorname{div}_{\theta}u)-\operatorname{div}_{\theta}\left(
\rho^{\varepsilon}u^{\varepsilon}\cdot\nabla u^{\varepsilon}\right)
+\operatorname{div}_{\theta}g^{\varepsilon}%
\]
such that%
\begin{align*}
\nabla\left(  \mu\operatorname{div}_{\theta}u^{\varepsilon}+\left(
\mu+\lambda\right)  \operatorname{div}u^{\varepsilon}+\xi\ast
\operatorname{div}u^{\varepsilon}-(\rho^{\varepsilon})^{\gamma}\right)   &
=-\left(  -\Delta_{\theta}\right)  ^{-1}(-\Delta)(\eta\ast\operatorname{div}%
_{\theta}u)\\
&  +\left(  -\Delta_{\theta}\right)  ^{-1}\nabla\left(  -\operatorname{div}%
_{\theta}\left(  \rho^{\varepsilon}u^{\varepsilon}\cdot\nabla u^{\varepsilon
}\right)  +\operatorname{div}_{\theta}g^{\varepsilon}\right)
\end{align*}
and we recover that%
\[
\mu\operatorname{div}_{\theta}u^{\varepsilon}+\left(  \mu+\lambda\right)
\operatorname{div}u^{\varepsilon}+\xi\ast\operatorname{div}u^{\varepsilon
}-(\rho^{\varepsilon})^{\gamma}\in W^{1,\frac{3(\gamma-1)}{2\gamma-1}}\left(
\mathbb{T}^{3}\right)
\]
and therefore, owing to the Rellich-Kondrachov we get that%
\[
\lim_{\varepsilon\rightarrow0}(\mu\operatorname{div}_{\theta}u^{\varepsilon
}+\left(  \mu+\lambda\right)  \operatorname{div}u^{\varepsilon}+\xi
\ast\operatorname{div}u^{\varepsilon}-(\rho^{\varepsilon})^{\gamma}%
)=\mu\operatorname{div}_{\theta}u+\left(  \mu+\lambda\right)
\operatorname{div}u+\xi\ast\operatorname{div}u-\overline{\rho^{\gamma}}%
\]
strongly in $L^{r}\left(  \mathbb{T}^{3}\right)  $ for all $r\in\lbrack
1,\frac{3\left(  \gamma-1\right)  }{\gamma})$. This implies that%
\begin{align*}
&  \lim_{\varepsilon\rightarrow0}\rho^{\varepsilon}\left(  \mu
\operatorname{div}_{\theta}u^{\varepsilon}+\left(  \mu+\lambda\right)
\operatorname{div}u^{\varepsilon}+\xi\ast\operatorname{div}u^{\varepsilon
}-(\rho^{\varepsilon})^{\gamma}\right) \\
&  =\rho\left(  \mu\operatorname{div}_{\theta}u+\left(  \mu+\lambda\right)
\operatorname{div}u+\xi\ast\operatorname{div}u-\overline{\rho^{\gamma}%
}\right)  \text{ weakly in }L^{r}\left(  \mathbb{T}^{3}\right)  ,
\end{align*}
for some $r>1$. Of course, we may use the strong convergence of $\nabla
u^{\varepsilon}$ to $\nabla u$ in order to conclude that%
\begin{align*}
&  \lim_{\varepsilon\rightarrow0}\rho^{\varepsilon}\left(  \mu
\operatorname{div}_{\theta}u^{\varepsilon}+\left(  \mu+\lambda\right)
\operatorname{div}u^{\varepsilon}+\xi\ast\operatorname{div}u^{\varepsilon
}\right) \\
&  =\rho\left(  \mu\operatorname{div}_{\theta}u+\left(  \mu+\lambda\right)
\operatorname{div}u+\xi\ast\operatorname{div}u\right)  \text{ weakly in }%
L^{r}\left(  \mathbb{T}^{3}\right)  ,
\end{align*}
for some $r>1$. Combining the last two identities we get that%
\[
\lim_{\varepsilon\rightarrow0}(\rho^{\varepsilon})^{\gamma+1}=\rho
\overline{\rho^{\gamma}}\text{ weakly in }L^{r}\left(  \mathbb{T}^{3}\right)
,
\]
with $r>1$ which, of course, implies that $\rho^{\gamma}=\overline
{\rho^{\gamma}}$. This concludes the proof of Theorem \ref{Main2}.
\end{proof}

\section{Construction of approximate solutions \label{CAS}}

A weak solution for system
\eqref{Stationary_Stokes}--\eqref{diffusion_operator} will be obtained as the
limit of solutions of the following regularized system%
\begin{equation}
\left\{
\begin{array}
[c]{l}%
-\varepsilon\Delta\rho+\delta\left(  \rho-M\right)  +\operatorname{div}\left(
\rho\omega_{\delta}\ast u\right)  =0,\\
\dfrac{\delta}{2}\left(  \rho u-%
%TCIMACRO{\dint _{\mathbb{T}^{3}}}%
%BeginExpansion
{\displaystyle\int_{\mathbb{T}^{3}}}
%EndExpansion
\rho u\right)  +\operatorname{div}\left(  \rho\omega_{\delta}\ast u\otimes
u\right)  -\mathcal{A}u+\nabla\left(  \omega_{\delta}\ast\rho^{\gamma}\right)
+\varepsilon\left(  \nabla u\nabla\rho-%
%TCIMACRO{\dint _{\mathbb{T}^{3}}}%
%BeginExpansion
{\displaystyle\int_{\mathbb{T}^{3}}}
%EndExpansion
\nabla u\nabla\rho\right)  =\omega_{\delta}\ast g,\\
\rho\geq0,\text{ }%
%TCIMACRO{\dint _{\mathbb{T}^{3}}}%
%BeginExpansion
{\displaystyle\int_{\mathbb{T}^{3}}}
%EndExpansion
\rho=M,\text{ }%
%TCIMACRO{\dint _{\mathbb{T}^{3}}}%
%BeginExpansion
{\displaystyle\int_{\mathbb{T}^{3}}}
%EndExpansion
u=0,
\end{array}
\right.  \label{Approximare_eps_delta}%
\end{equation}
when the regularization parameters $\delta,\varepsilon\in\left(  0,1\right)
^{2}$ tend to $0$. Above,
\[
\omega_{\delta}\left(  \cdot\right)  =\frac{1}{\delta^{3}}\omega\left(
\frac{1}{\delta}\cdot\right)
\]
with $\omega\in\mathcal{D}\left(  \mathbb{R}^{3}\right)  $ a smooth,
non-negative, even function which is compactly supported in the unit ball
centered at the origin and with integral $1$. The fact that we can solve the
above system is a consequence of the Leray-Schauder fixed point theorem, see
Theorem \ref{Schauder_Leray} from the Appendix. The objective of the next
section is to construct solutions for $\left(
\text{\ref{Approximare_eps_delta}}\right)  $.

\subsection{Existence of solutions for the approximate system $\left(
\text{\ref{Approximare_eps_delta}}\right)  $%
\label{Section_Existence_for_regularized1}}

Let us fix $\left(  \varepsilon,\delta\right)  \in\left(  0,1\right)  ^{2}$.
We begin by the following proposition.

\begin{proposition}
\label{transport_stationar}Consider $v\in\left(  W^{1,\infty}\left(
\mathbb{T}^{3}\right)  \right)  ^{3}$ and $M,\delta,\varepsilon>0$. Then there
exists a unique positive solution $\rho\in W^{2,2}\left(  \mathbb{T}%
^{3}\right)  $ for the equation%
\[
-\varepsilon\Delta\rho+\delta\left(  \rho-M\right)  +\operatorname{div}\left(
\rho v\right)  =0.
\]
Moreover, there exists a positive constant $C\left(  M,\varepsilon\right)  $
depending on $\varepsilon$ and $M$ such that:%
\[
\left\Vert \rho\right\Vert _{W^{2,2}}\leq C\left(  M,\varepsilon\right)
\left(  1+\left\Vert v\right\Vert _{W^{1,\infty}}^{2}\right)  .
\]

\end{proposition}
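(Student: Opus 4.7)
My plan is to treat the problem as a linear second-order elliptic equation with bounded coefficients on $\mathbb{T}^3$, proceeding in three steps: (i) existence and uniqueness via the Fredholm alternative, with the adjoint kernel killed by the maximum principle, (ii) positivity via a maximum-principle argument, and (iii) $W^{2,2}$ regularity via elliptic bootstrap starting from an $H^1$ energy estimate. The mean constraint $\int_{\mathbb{T}^3}\rho = M$ is obtained for free by integrating the equation and using the periodicity of $\rho$ and $\rho v$.

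For well-posedness, I write the equation as $L\rho := -\varepsilon\Delta\rho + \delta\rho + \operatorname{div}(\rho v) = \delta M$ and view $L$ as a bounded operator $H^2(\mathbb{T}^3) \to L^2(\mathbb{T}^3)$. Since $v \in W^{1,\infty}$, the map $\rho \mapsto \operatorname{div}(\rho v) = v\cdot\nabla\rho + (\operatorname{div} v)\rho$ is bounded from $H^2$ to $H^1$, and composed with the compact Rellich embedding $H^1 \hookrightarrow L^2$ on the compact torus, it is a compact operator into $L^2$. Hence $L$ is a compact perturbation of the isomorphism $-\varepsilon\Delta + \delta$, and the Fredholm alternative applies. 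It suffices to verify that the adjoint homogeneous equation $L^{\ast}\phi := -\varepsilon\Delta\phi + \delta\phi - v\cdot\nabla\phi = 0$ admits only $\phi \equiv 0$ as solution; this follows from the strong maximum principle on $\mathbb{T}^3$: at a positive maximum $x_0$ one has $\nabla\phi(x_0)=0$ and $\Delta\phi(x_0)\le 0$, so $0 = L^{\ast}\phi(x_0) \ge \delta\phi(x_0) > 0$, a contradiction, and similarly $\phi$ admits no negative minimum.

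For positivity, the cleanest route is parabolic: the non-stationary equation $\partial_t\widetilde{\rho} - \varepsilon\Delta\widetilde{\rho} + \delta(\widetilde{\rho}-M) + \operatorname{div}(\widetilde{\rho}v) = 0$ with $\widetilde{\rho}|_{t=0} = M$ preserves non-negativity---at the first point $(t_0, x_0)$ where $\widetilde{\rho}$ would touch zero, the equation forces $\partial_t\widetilde{\rho}(t_0, x_0) \ge \delta M > 0$, contradicting non-increase in time---and the damping $\delta > 0$ drives exponential convergence to the unique stationary solution, hence $\rho \ge 0$. For the $W^{2,2}$ estimate, I rearrange to $-\varepsilon\Delta\rho = -\delta(\rho - M) - v\cdot\nabla\rho - (\operatorname{div} v)\rho$ and invoke standard elliptic $L^2$ regularity for $-\Delta$ on the torus to obtain $\|\rho\|_{W^{2,2}} \le C(\varepsilon, M)\,(1 + \|v\|_{W^{1,\infty}})\,\|\rho\|_{H^1}$. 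The $H^1$ bound is obtained by testing against $\rho$ itself: using $\int v\cdot\nabla\rho\,\rho = -\tfrac{1}{2}\int(\operatorname{div} v)\rho^2$ and $\int\rho = M$, I reach an energy identity whose sign-indefinite transport contribution is absorbed via Poincar\'e on the mean-zero part $\rho - M$ (with $\varepsilon > 0$ fixed), giving $\|\rho\|_{H^1} \le C(\varepsilon, M)\,(1 + \|v\|_{W^{1,\infty}})$. Combining the two bounds produces the claimed quadratic dependence $1 + \|v\|_{W^{1,\infty}}^2$.

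The main obstacle is precisely the sign-indefiniteness of the term $\tfrac{1}{2}\int(\operatorname{div} v)\rho^2$ that appears when testing against $\rho$: no naive coercivity argument closes the $H^1$ energy estimate. Controlling it requires exploiting that $\varepsilon$ is fixed, the mean constraint $\int\rho = M$, and a Young-type absorption, and it is exactly this combination that produces the quadratic rather than linear dependence on $\|v\|_{W^{1,\infty}}$ in the final bound, since the elliptic-regularity bootstrap from $L^2$ data to $W^{2,2}$ output multiplies the two linear factors.
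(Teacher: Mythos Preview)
Your Fredholm-alternative approach to existence and uniqueness is correct and in fact cleaner than the paper's: since the equation is \emph{linear} in $\rho$, the paper's Leray--Schauder fixed-point argument is overkill, and your observation that $\rho\mapsto\operatorname{div}(\rho v)$ is compact $H^2\to L^2$ with the adjoint kernel killed by the maximum principle settles well-posedness directly.

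There is, however, a genuine gap in your $H^1$ energy estimate. Testing against $\rho$ yields
\[
\varepsilon\int_{\mathbb{T}^3}|\nabla\rho|^2+\delta\int_{\mathbb{T}^3}\rho^2=\delta M^2+\frac{1}{2}\int_{\mathbb{T}^3}(\operatorname{div} v)\,\rho^2,
\]
and you propose to absorb the right-hand side via Poincar\'e on $\rho-M$. But Poincar\'e only gives $\|\rho\|_{L^2}^2\le M^2+C_P^2\|\nabla\rho\|_{L^2}^2$, so the transport term contributes $\tfrac{1}{2}C_P^2\|\operatorname{div} v\|_{L^\infty}\|\nabla\rho\|_{L^2}^2$ on the right, which cannot be absorbed into $\varepsilon\|\nabla\rho\|_{L^2}^2$ unless $\|\operatorname{div} v\|_{L^\infty}$ is small relative to $\varepsilon$---an assumption you do not have. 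The paper closes this estimate differently: it uses the positivity $\rho\ge 0$ together with the mass constraint to get the $L^1$ bound $\|\rho\|_{L^1}=M$, then interpolates $\|\rho\|_{L^2}^2\le\|\rho\|_{L^1}^{4/5}\|\rho\|_{L^6}^{6/5}$ and invokes Sobolev $\|\rho-M\|_{L^6}\lesssim\|\nabla\rho\|_{L^2}$. This produces a \emph{subquadratic} power of $\|\nabla\rho\|_{L^2}$ on the right, which is then absorbed by Young's inequality regardless of the size of $v$. Without this interpolation step your estimate does not close.

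A secondary gap: your parabolic route to positivity correctly shows that $\widetilde\rho\ge 0$ is preserved, but the claim that ``damping $\delta>0$ drives exponential convergence to the stationary solution'' is not justified by the naive $L^2$ energy, which suffers from exactly the same sign-indefinite term as above. One can repair this by working in $L^1$ (Kato's inequality gives $\frac{d}{dt}\|\widetilde\rho-\rho\|_{L^1}+\delta\|\widetilde\rho-\rho\|_{L^1}\le 0$), but the paper's direct argument---multiplying the stationary equation by a smooth approximation $\psi_\eta'(\rho)$ of the derivative of the negative part and integrating---is shorter and avoids the detour.
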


\begin{proof}
The proof is a classical application of the Leray-Schauder theorem. For any
$r\in W^{1,2}\left(  \mathbb{T}^{3}\right)  $ we consider $T\left(  r\right)
\in W^{1,2}\left(  \mathbb{T}^{3}\right)  $ verifying
\begin{equation}
-\varepsilon\Delta T\left(  r\right)  +\delta\left(  T\left(  r\right)
-M\right)  +\operatorname{div}\left(  rv\right)  =0.\label{definition_T}%
\end{equation}
The existence of $T\left(  r,v\right)  \in W^{1,2}\left(  \mathbb{T}%
^{3}\right)  $ is a consequence of the Lax-Milgram theorem.
\medskip \noindent\textit{Continuity and compactness of the operator $T$.} Observe
that
\begin{equation}
\int_{\mathbb{T}^{3}}T\left(  r\right)  dx=M.\label{interala_M}%
\end{equation}
Using $\left(  \text{\ref{interala_M}}\right)  $ we have that%
\begin{align*}
\varepsilon\int_{\mathbb{T}^{3}}\left\vert \nabla T\left(  r\right)
\right\vert ^{2}+\delta\int_{\mathbb{T}^{3}}\left\vert T\left(  r\right)
\right\vert ^{2} &  \leq\delta M^{2}+\left\Vert r\right\Vert _{L^{6}%
}\left\Vert v\right\Vert _{L^{3}}\left\Vert \nabla T\left(  r\right)
\right\Vert _{L^{2}}\\
&  \leq M^{2}+\frac{1}{2\varepsilon}\left\Vert r\right\Vert _{L^{6}}%
^{2}\left\Vert v\right\Vert _{L^{\infty}}^{2}+\frac{\varepsilon}{2}\left\Vert
\nabla T\left(  r\right)  \right\Vert _{L^{2}}^{2},
\end{align*}
which gives us%
\begin{equation}
\varepsilon^{\frac{1}{2}}\left\Vert \nabla T\left(  r\right)  \right\Vert
_{L^{2}}+\delta^{\frac{1}{2}}\left\Vert T\left(  r\right)  \right\Vert
_{L^{2}}\leq M+C\left(  \varepsilon\right)  \left\Vert v\right\Vert
_{L^{\infty}}\left\Vert r\right\Vert _{W^{1,2}}%
.\label{estimare_independenta_delta_1}%
\end{equation}
\bigskip We also have that%
\begin{align}
\varepsilon\left\Vert \Delta T\left(  r\right)  \right\Vert _{L^{2}} &
\leq\delta  \left\Vert T\left(  r\right)  -M\right\Vert _{L^{2}}
+\left\Vert r\right\Vert _{L^{2}}\left\Vert \operatorname{div}v\right\Vert
_{L^{\infty}}+\left\Vert v\right\Vert _{L^{\infty}}\left\Vert \nabla
r\right\Vert _{L^{2}}\nonumber\\
&  \leq C\left(  M,\varepsilon\right)  \left(  1+\left\Vert v\right\Vert
_{W^{1,\infty}}\left\Vert r\right\Vert _{W^{1,2}}\right)
,\label{estimare_Delta_T}%
\end{align}
Consequently
\begin{equation}
T\left(  r\right)  \in W^{2,2}\left(  \mathbb{T}^{3}\right)
,\label{compactness_in_rho}%
\end{equation}
such that using the Sobolev inequality, one also has that $\nabla T\left(
r\right)  \in L^{6}\left(  \mathbb{T}^{3}\right)  $ and $T\left(  r\right)
\in L^{r}\left(  \mathbb{T}^{3}\right)  $ for all $r\in\lbrack1,\infty]$ with%
\begin{equation}
\left\Vert \nabla T\left(  r\right)  \right\Vert _{L^{6}}+\left\Vert T\left(
r\right)  \right\Vert _{L^{r}}\leq C\left(  M,\varepsilon\right)  \left(
1+\left\Vert v\right\Vert _{W^{1,\infty}}\left\Vert r\right\Vert _{W^{1,2}%
}\right)  .\label{estimari_gradient_si_sup}%
\end{equation}
Next, consider $r_{0}\in W^{1,2}\left(  \mathbb{T}^{3}\right)  $ and $r\in
W^{1,2}\left(  \mathbb{T}^{3}\right)  $ such that%
\[
\left\Vert r-r_{0}\right\Vert _{W^{1,2}}\leq1.
\]
First, we see that%
\[
-\varepsilon\Delta\left(  T\left(  r\right)  -T\left(  r_{0}\right)  \right)
+\delta\left(  T\left(  r\right)  -T(r_{0})\right)  +\operatorname{div}\left(
(r-r_{0})v\right)  =0,
\]
such that multiplying with $T\left(  r\right)  -T\left(  r_{0}\right)  $ and
applying Cauchy's inequality gives us%
\begin{equation}
\left\Vert T\left(  r\right)  -T\left(  r_{0}\right)  \right\Vert _{W^{1,2}%
}\leq C\left(  M,\varepsilon\right)  \left\Vert v\right\Vert _{L^{\infty}%
}\left\Vert r-r_{0}\right\Vert _{L^{2}}\leq C\left(  M,\varepsilon\right)
\left\Vert v\right\Vert _{L^{\infty}}\left\Vert r-r_{0}\right\Vert _{W^{1,2}}.
\end{equation}
Thus, $T$ is a continuous operator from $W^{1,2}\left(  \mathbb{T}^{3}\right)
$ into itself which is compact in view of $\left(
\text{\ref{compactness_in_rho}}\right)  $. To apply the Leray-Schauder fixed point theorem, we define the set
\[
\mathcal{P=}\left\{  \rho\in W^{1,2}\left(  \mathbb{T}^{3}\right)
:\rho=\lambda T(\rho)\text{ for some }\lambda\in(0,1]\right\}  .
\]
and prove that it is a bounded set.
\noindent\textit{The set $\mathcal{P}$ is bounded.}
Consider $\rho$ $\in\mathcal{P}$ and $\lambda\in(0,1]$ such that%
\begin{equation}
-\varepsilon\Delta\rho+\delta\left(  \rho-\lambda M\right)  +\lambda
\operatorname{div}\left(  \rho v\right)  =0.\label{equation_rho_lambda}%
\end{equation}
We begin by proving that that such a $\rho$ is positive. In order to achieve
this, consider
\[
\psi_{\eta}\left(  s\right)  =\frac{\sqrt{\eta+s^{2}}-s}{2}%
\]
which is smooth and verifies for all $s\in\mathbb{R}$ and $\eta>0$%
\begin{equation}
\left\{
\begin{array}
[c]{l}%
0\leq\psi_{\eta}\left(  s\right)  -\left(  \dfrac{\left\vert s\right\vert
-s}{2}\right)  \leq\dfrac{\sqrt{\eta}}{2},\\
\psi_{\eta}^{\prime}\left(  s\right)  \leq0,\psi_{\eta}^{\prime\prime}\left(
s\right)  \geq0,\\
0\leq\psi_{\eta}\left(  s\right)  -s\psi_{\eta}^{\prime}\left(  s\right)
\leq\dfrac{\sqrt{\eta}}{2}.
\end{array}
\right.  \label{negative_part_approx}%
\end{equation}
Moreover, one can justify by regularization that for all $\eta>0$%
\begin{align*}
&  \delta\left(  \rho\psi_{\eta}^{\prime}\left(  \rho\right)  -\lambda
M\psi_{\eta}^{\prime}\left(  \rho\right)  \right)  +\lambda\operatorname{div}%
\left(  \psi_{\eta}\left(  \rho\right)  v\right)  +\lambda\left(  \rho
\psi_{\eta}^{\prime}\left(  \rho\right)  -\psi_{\eta}\left(  \rho\right)
\right)  \operatorname{div}v\\
&  =\varepsilon\Delta\psi_{\eta}\left(  \rho\right)  -\varepsilon\psi_{\eta
}^{^{\prime\prime}}\left(  \rho\right)  \left\vert \nabla\rho\right\vert ^{2}.
\end{align*}
We rewrite the last equation under the form%
\begin{align*}
\delta\psi_{\eta}\left(  \rho\right)   &  =\delta\left(  \psi_{\eta}\left(
\rho\right)  -\rho\psi_{\eta}^{\prime}\left(  \rho\right)  \right)  +\lambda
M\delta\psi_{\eta}^{\prime}\left(  \rho\right)  \\
&  -\lambda\operatorname{div}\left(  \psi_{\eta}\left(  \rho\right)  v\right)
-\lambda\left(  \rho\psi_{\eta}^{\prime}\left(  \rho\right)  -\psi_{\eta
}\left(  \rho\right)  \right)  \operatorname{div}v\\
&  +\varepsilon\Delta\psi_{\eta}\left(  \rho\right)  -\varepsilon\psi_{\eta
}^{^{\prime\prime}}\left(  \rho\right)  \left\vert \nabla\rho\right\vert ^{2}.
\end{align*}
By integration and using $\left(  \text{\ref{negative_part_approx}}\right)  $
we end up with%
\[
\delta\int_{\mathbb{T}^{3}}\psi_{\eta}(\rho)\leq\delta\int_{\mathbb{T}^{3}%
}\left(  \psi_{\eta}(\rho)-\rho\psi_{\eta}^{\prime}\left(  \rho\right)
\right)  -\lambda\int_{\mathbb{T}^{3}}\left(  \rho\psi_{\eta}^{\prime}\left(
\rho\right)  -\psi_{\eta}\left(  \rho\right)  \right)  \operatorname{div}v,
\]
which gives when $\eta\rightarrow0$
\[
\frac{\delta}{2}\int_{\mathbb{T}^{3}}\left(  \left\vert \rho\right\vert
-\rho\right)  \leq0,
\]
which implies that
\[
\rho\left(  x\right)  \geq0\text{ a.e. on }\mathbb{T}^{3}.
\]
Next, we see that by integrating $\left(  \text{\ref{equation_rho_lambda}%
}\right)  $ we get that%
\[
\left\Vert \rho\right\Vert _{L^{1}}=\int_{\mathbb{T}^{3}}\rho=\lambda M\leq M.
\]
We also have that%
\begin{align*}
\varepsilon\int_{\mathbb{T}^{3}}\left\vert \nabla\rho\right\vert ^{2}%
+\delta\int_{\mathbb{T}^{3}}\rho^{2} &  =\delta\lambda M^{2}+\lambda\int
\rho^{2}\operatorname{div}v\leq\delta M^{2}+\lambda\left\Vert
\operatorname{div}v\right\Vert _{L^{\infty}}\left\Vert \rho\right\Vert
_{L^{2}}^{2}\\
&  \leq\delta M^{2}+\left\Vert \operatorname{div}v\right\Vert _{L^{\infty}%
}\left\Vert \rho\right\Vert _{L^{2}}^{2}\leq\delta M^{2}+\left\Vert
\operatorname{div}v\right\Vert _{L^{\infty}}\left\Vert \rho\right\Vert
_{L^{1}}^{\frac{4}{5}}\left\Vert \rho\right\Vert _{L^{6}}^{\frac{12}{5}}\\
&  \leq\delta M^{2}+\frac{1}{4\alpha\varepsilon}M^{\frac{8}{5}}\left\Vert
\operatorname{div}v\right\Vert _{L^{\infty}}^{2}+\alpha\varepsilon\left\Vert
\rho-M\right\Vert _{L^{6}}^{2}\\
&  \leq\delta M^{2}+\frac{1}{4\alpha\varepsilon}M^{\frac{8}{5}}\left\Vert
\operatorname{div}v\right\Vert _{L^{\infty}}^{2}+\alpha\varepsilon C\left\Vert
\nabla\rho\right\Vert _{L^{2}}^{2}.
\end{align*}
We see that choosing $\alpha$ sufficiently small gives us
\[
\varepsilon\int_{\mathbb{T}^{3}}\left\vert \nabla\rho\right\vert ^{2}%
+\delta\int_{\mathbb{T}^{3}}\rho^{2}\leq C\left(  M,\varepsilon\right)
\left(  1+\left\Vert v\right\Vert _{W^{1,\infty}}^{2}\right)
\]
which means that the set is bounded.

\noindent{\textit{Existence of solution to the nonlinear equation.}}

Thanks to the Leray--Schauder Theorem, see Theorem
\ref{Schauder_Leray}, we get the existence of a fixed point for the operator
$T$ defined by $\left(  \text{\ref{definition_T}}\right)  $\ which obviously,
satisfies the equation%
\begin{equation}
-\varepsilon\Delta\rho+\delta\left(  \rho-M\right)  +\operatorname{div}\left(
\rho v\right)  =0.\label{system_rho}%
\end{equation}
and, moreover, verifies%
\[
\left\{
\begin{array}
[c]{l}%
%TCIMACRO{\dint _{\mathbb{T}^{3}}}%
%BeginExpansion
{\displaystyle\int_{\mathbb{T}^{3}}}
%EndExpansion
\rho=M,\\
\varepsilon%
%TCIMACRO{\dint _{\mathbb{T}^{3}}}%
%BeginExpansion
{\displaystyle\int_{\mathbb{T}^{3}}}
%EndExpansion
\left\vert \nabla\rho\right\vert ^{2}+\delta%
%TCIMACRO{\dint _{\mathbb{T}^{3}}}%
%BeginExpansion
{\displaystyle\int_{\mathbb{T}^{3}}}
%EndExpansion
\rho^{2}\leq C\left(  M,\varepsilon\right)  \left(  1+\left\Vert v\right\Vert
_{W^{1,\infty}}^{2}\right)  .
\end{array}
\right.
\]
We also have that%
\[
\varepsilon\left\Vert \Delta\rho\right\Vert _{L^{2}}\leq\delta\left\Vert
\rho-M\right\Vert _{L^{2}}+\left\Vert \operatorname{div}\left(  \rho v\right)
\right\Vert _{L^{2}}\leq\left(  1+\left\Vert v\right\Vert _{W^{1,\infty}%
}\right)  \left\Vert \rho\right\Vert _{W^{1,2}}%
\]
which leads to
\[
\left\Vert \rho\right\Vert _{W^{2,2}}\leq C\left(  M,\varepsilon\right)
\left(  1+\left\Vert v\right\Vert _{W^{1,\infty}}^{2}\right)  .
\]
Let us consider two solutions $\left(  \rho,\tilde{\rho}\right)  $ of $\left(
\text{\ref{system_rho}}\right)  $ and observe that their difference verifies%
\[
-\varepsilon\Delta\left(  \rho-\tilde{\rho}\right)  +\delta\left(  \rho
-\tilde{\rho}\right)  +\operatorname{div}\left(  \left(  \rho-\tilde{\rho
}\right)  v\right)  =0.
\]
For all $\eta>0$, multiplying the above equation with $\varphi_{\eta}^{\prime
}\left(  \rho-\tilde{\rho}\right)  $ where
\[
\varphi_{\eta}\left(  s\right)  =\sqrt{\eta+s^{2}},
\]
integrating and making $\eta\rightarrow0$ we get that%
\[
\int_{\mathbb{T}^{3}}\left\vert \rho-\tilde{\rho}\right\vert =0.
\]
The details are left as exercise for the reader. This ends the proof of
Proposition \ref{transport_stationar}.
\end{proof}As was announced above, solutions for $\left(
\text{\ref{Approximare_eps_delta}}\right)  $ are obtained as fixed points of
an operator that is constructed in the following lines. We fix
\begin{equation}
M>0,\qquad g\in(L^{\frac{3\left(  \gamma-1\right)  }{2\gamma-1}}\left(
\mathbb{T}^{3}\right)  )^{3}\text{ with }\int_{\mathbb{T}^{3}}%
g=0,\label{M_et_g}%
\end{equation}
along with $\left(  \varepsilon,\delta\right)  \in\left(  0,1\right)  ^{2}$,
and for any $v\in\left(  W^{1,2}\left(  \mathbb{T}^{3}\right)  \right)  ^{3}$
with $%
%TCIMACRO{\dint _{\mathbb{T}^{3}}}%
%BeginExpansion
{\displaystyle\int_{\mathbb{T}^{3}}}
%EndExpansion
v=0$ we consider $S\left(  v\right)  \in\left(  W^{1,2}\left(  \mathbb{T}%
^{3}\right)  \right)  ^{3}$ with $%
%TCIMACRO{\dint _{\mathbb{T}^{3}}}%
%BeginExpansion
{\displaystyle\int_{\mathbb{T}^{3}}}
%EndExpansion
S\left(  v\right)  =0$ verifying
\begin{align}
-\mathcal{A}S\left(  v\right)   &  =-\frac{\delta}{2}\left(  \rho
v-\int_{\mathbb{T}^{3}}\rho v\right)  -\operatorname{div}\left(  \rho
\omega_{\delta}\ast v\otimes v\right)  -\nabla\left(  \omega_{\delta}\ast
\rho^{\gamma}\right)  \nonumber\\
&  -\varepsilon\left(  \nabla v\nabla\rho-\int_{\mathbb{T}^{3}}\nabla
v\nabla\rho\right)  +\omega_{\delta}\ast g,\label{definitie_S_}%
\end{align}
where $\rho\in W^{2,2}\left(  \mathbb{T}^{3}\right)  $ is the unique solution
of%
\[
-\varepsilon\Delta\rho+\delta\left(  \rho-M\right)  +\operatorname{div}\left(
\rho\omega_{\delta}\ast v\right)  =0.
\]
The existence of $S\left(  v\right)  $ is a consequence of the Lax-Milgram
theorem applied in the closed subspace of $\left(  W^{1,2}\left(
\mathbb{T}^{3}\right)  \right)  ^{3}$ of vector fields with zero mean. It
remains now to prove that we have a fixed point to solve the nonlinear
approximate system. This is the object of the following proposition.

\begin{proposition}
\label{Point_fix}The operator $S$ defined by $\left(  \text{\ref{definitie_S_}%
}\right)  $ admits a fixed point.
\end{proposition}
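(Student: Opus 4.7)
The plan is to invoke the Leray--Schauder fixed point theorem (Theorem \ref{Schauder_Leray}) applied to $S$ viewed as a mapping of the closed subspace $X \subset (W^{1,2}(\mathbb{T}^3))^3$ of mean-zero vector fields into itself. This requires three verifications: $S$ is well-defined, $S$ is continuous and compact on $X$, and the set $\mathcal{P} = \{v \in X : v = \lambda S(v) \text{ for some } \lambda \in (0,1]\}$ is bounded in $X$.

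Well-definedness and continuity follow by chaining the two constructions already at our disposal. Given $v \in X$, the mollified field $\omega_\delta \ast v$ lies in $W^{1,\infty}(\mathbb{T}^3)$ with a norm controlled by $\delta^{-k}\|v\|_{L^2}$ for suitable $k$; hence Proposition \ref{transport_stationar} furnishes a unique nonnegative $\rho \in W^{2,2}(\mathbb{T}^3)$ with $\int \rho = M$ and a quantitative bound in terms of $\|\omega_\delta \ast v\|_{W^{1,\infty}}$. The right-hand side of \eqref{definitie_S_} then sits in $L^2(\mathbb{T}^3)$ with zero mean (the mean of $\omega_\delta \ast g$ vanishes because $\int g=0$, and each other block is written precisely to have zero mean), so the ellipticity estimate \eqref{ellipticity_of_diffusion_operator_1} or \eqref{ellipticity_of_diffusion_operator_2} combined with Lax--Milgram on $X$ produces a unique $S(v) \in X$, in fact with regularity $W^{2,2}$. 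If $v_n \to v$ in $W^{1,2}$ then $\omega_\delta \ast v_n \to \omega_\delta \ast v$ in $W^{1,\infty}$; the linear elliptic continuous dependence in Proposition \ref{transport_stationar} gives convergence of the associated $\rho_n$ in $W^{2,2}$, the right-hand side of \eqref{definitie_S_} converges in $L^2$, and inverting $\mathcal{A}$ yields continuity of $S$. The gain of regularity from $X$ into $W^{2,2} \hookrightarrow \hookrightarrow W^{1,2}$ supplies the compactness.

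The main obstacle, and the only step requiring genuine work, is a uniform a priori bound on $\mathcal{P}$. For $v = \lambda S(v)$ the pair $(\rho, v)$ solves the system \eqref{Approximare_eps_delta} with the momentum equation carrying a factor $1/\lambda$ in front of $\mathcal{A}$ (or, equivalently, $\lambda$ in front of every other term). I would test this momentum equation against $v$ itself: the dissipation yields, via \eqref{ellipticity_of_diffusion_operator_1}/\eqref{ellipticity_of_diffusion_operator_2}, a coercive quantity bounded below by $c\,\|\nabla v\|_{L^2}^2$; the forcing term is absorbed by the assumed integrability of $g$ and Young's inequality. To handle the convection and pressure, I would simultaneously test the approximate mass equation against $\tfrac{\gamma}{\gamma-1}\rho^{\gamma-1}$, producing $\varepsilon\gamma \int \rho^{\gamma-2}|\nabla\rho|^2$, a $\delta$-coercive $\rho^\gamma$ term, and the crucial $\int \rho^\gamma\,\omega_\delta \ast \operatorname{div} v$ arising from the transport piece (since $\omega_\delta$ is even, this equals $\int \omega_\delta \ast \rho^\gamma \operatorname{div} v$, which cancels with the corresponding term in the momentum estimate after integration by parts). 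The specific artificial terms in \eqref{Approximare_eps_delta} are tailored to make further cancellations: the term $\frac{\delta}{2}\rho v$ absorbs what remains from writing $\operatorname{div}(\rho\,\omega_\delta\ast v \otimes v)\cdot v = \tfrac{1}{2}\operatorname{div}(\rho\,\omega_\delta\ast v\,|v|^2) - \tfrac{1}{2}|v|^2 \operatorname{div}(\rho\,\omega_\delta\ast v)$ and invoking the mass equation to rewrite the divergence, while the $\varepsilon\,\nabla v \nabla\rho$ correction compensates for the remainder produced by $\varepsilon\Delta\rho$. After these cancellations, a single application of Young's inequality together with the smallness built into the $\delta$- and $\varepsilon$-terms gives
\[
\|\nabla v\|_{L^2}^2 + \delta\|\rho\|_{L^\gamma}^\gamma + \varepsilon\|\nabla \rho^{\gamma/2}\|_{L^2}^2 \leq C(M,\varepsilon,\delta,\|g\|_{L^2}),
\]
uniformly in $\lambda \in (0,1]$. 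Combined with the Poincar\'e inequality on $X$, this bounds $\mathcal{P}$ in $W^{1,2}$ and Leray--Schauder then delivers the desired fixed point of $S$.
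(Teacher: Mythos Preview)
Your proof is correct and follows essentially the same route as the paper: Leray--Schauder applied to $S$, with compactness coming from an elliptic regularity gain and the a priori bound on $\mathcal{P}$ obtained by testing the momentum equation against $v$ and using the built-in cancellations between the convective term, the $\varepsilon\nabla v\nabla\rho$ correction, the $\tfrac{\delta}{2}\rho v$ term, and the mass equation---exactly as you describe. One small slip: the right-hand side of \eqref{definitie_S_} is not in $L^2$ in general (with $\nabla v\in L^2$ and $\nabla\rho\in L^6$ the terms $\operatorname{div}(\rho\,\omega_\delta\ast v\otimes v)$ and $\varepsilon\nabla v\nabla\rho$ are only $L^{3/2}$), so one gets $S(v)\in W^{2,3/2}$ rather than $W^{2,2}$; this still embeds compactly into $W^{1,2}$ and the argument is unaffected.
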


\begin{proof}
Proposition \ref{Point_fix} is a consequence of the Schauder-Leray theorem
(see Theorem \ref{Schauder_Leray}). We will first prove that $S$ is continuous
and compact and in a second time that the set%
\begin{equation}
\mathcal{P=}\left\{  u\in\left(  W^{1,2}\left(  \mathbb{T}^{3}\right)
\right)  ^{3}:u=\lambda S(u)\text{ for some }\lambda\in(0,1]\right\}
\label{set_P}%
\end{equation}
is bounded.
\smallskip\noindent1) \textit{Continuity and compactness of theoperator }$S$. First, let us recall that%
\[
-\varepsilon\Delta\rho+\delta\left(  \rho-M\right)  +\operatorname{div}\left(
\rho\omega_{\delta}\ast v\right)  =0,
\]
then%
\begin{equation}
\left\Vert \rho\right\Vert _{W^{2,2}}\leq C\left(  M,\varepsilon
,\delta\right)  \left(  1+\left\Vert v\right\Vert _{W^{1,2}}^{2}\right),
\label{Nouvelle_estimation_reg_rho}%
\end{equation}
see Proposition \ref{transport_stationar}. In the following lines we show
that $S\left(  v\right)  $ is actually more regular than $v$. We begin with%
\begin{align*}
&  \left\Vert \operatorname{div}\left(  \rho\omega_{\delta}\ast v\otimes
v\right)  \right\Vert _{L^{\frac{3}{2}}}\\
&  \leq\left\Vert \operatorname{div}\left(  \rho\omega_{\delta}\ast v\right)
v\right\Vert _{L^{\frac{3}{2}}}+\left\Vert (\rho\omega_{\delta}\ast
v)\cdot\nabla v\right\Vert _{L^{\frac{3}{2}}}\\
&  \leq\left\Vert \omega_{\delta}\ast v\right\Vert _{L^{\infty}}\left\Vert
\nabla\rho\right\Vert _{L^{2}}\left\Vert v\right\Vert _{L^{6}}+\left\Vert
\omega_{\delta}\ast\operatorname{div}v\right\Vert _{L^{\infty}}\left\Vert
\rho\right\Vert _{L^{2}}\left\Vert v\right\Vert _{L^{6}}+\left\Vert
\rho\right\Vert _{L^{6}}\left\Vert \omega_{\delta}\ast v\right\Vert
_{L^{\infty}}\left\Vert \nabla v\right\Vert _{L^{2}}\\
&  \leq C\left(  M,\varepsilon,\delta\right)  \left(  1+\left\Vert
v\right\Vert _{W^{1,2}}^{4}\right).
\end{align*}
Using $\left(  \text{\ref{Nouvelle_estimation_reg_rho}}\right)  $ we arrive at%
\[
\left\Vert \omega_{\delta}\ast g\right\Vert _{L^{\infty}}+\left\Vert
\nabla(\omega_{\delta}\ast\rho^{\gamma})\right\Vert _{L^{\infty}}\leq
C(M,\varepsilon,\delta,\left\Vert v\right\Vert _{W^{1,2}},\left\Vert
g\right\Vert _{L^{\frac{6}{5}}}).
\]
Using again $\left(  \text{\ref{estimari_gradient_si_sup}}\right)  $ we have
that%
\[
\left\Vert \nabla v\nabla\rho-%
%TCIMACRO{\dint _{\mathbb{T}^{3}}}%
%BeginExpansion
{\displaystyle\int_{\mathbb{T}^{3}}}
%EndExpansion
\nabla v\nabla\rho\right\Vert _{L^{\frac{3}{2}}}\leq2\left\Vert \nabla
v\right\Vert _{L^{2}}\left\Vert \nabla\rho\right\Vert _{L^{6}}\leq2\left\Vert
\nabla v\right\Vert _{L^{2}}\left\Vert \rho\right\Vert _{W^{2,2}}\leq C\left(
M,\varepsilon,\delta\right)  \left(  1+\left\Vert v\right\Vert _{W^{1,2}}%
^{3}\right)  .
\]
Gathering the last three inequalities we get that%
\begin{equation}
\left\{
\begin{array}
[c]{l}%
\operatorname{div}\left(  \rho\omega_{\delta}\ast v\otimes v\right)
=\operatorname{div}\left(  \rho\omega_{\delta}\ast v\right)  v+(\rho
\omega_{\delta}\ast v)\cdot\nabla v\in(L^{\frac{3}{2}}\left(  \mathbb{T}%
^{3}\right)  )^{3},\\
\nabla(\omega_{\delta}\ast\rho^{\gamma}),\omega_{\delta}\ast g\in(L^{\infty
}\left(  \mathbb{T}^{3}\right)  )^{3},\\
\varepsilon\left(  \nabla v\nabla\rho-%
%TCIMACRO{\dint _{\mathbb{T}^{3}}}%
%BeginExpansion
{\displaystyle\int_{\mathbb{T}^{3}}}
%EndExpansion
\nabla v\nabla\rho\right)  ,\dfrac{\delta}{2}\left(  \rho v-%
%TCIMACRO{\dint _{\mathbb{T}^{3}}}%
%BeginExpansion
{\displaystyle\int_{\mathbb{T}^{3}}}
%EndExpansion
\rho v\right)  \in(L^{\frac{3}{2}}\left(  \mathbb{T}^{3}\right)  )^{3},
\end{array}
\right.  \label{terms_in_u}%
\end{equation}
which implies that%
\[
\mathcal{A}S\left(  v\right)  \in(L^{\frac{3}{2}}\left(  \mathbb{T}%
^{3}\right)  )^{3}%
\]
and consequently we get that%
\begin{equation}
S\left(  v\right)  \in(W^{2,\frac{3}{2}}\left(  \mathbb{T}^{3}\right)
)^{3}.\label{compactness_in_u}%
\end{equation}
Of course, sequences bounded in $W^{2,\frac{3}{2}}\left(  \mathbb{T}%
^{3}\right)  $ are precompact in $W^{1,2}\left(  \mathbb{T}^{3}\right)  $.\newline
Consider $\left(  v_{0},v_{1}\right)  \in\left(  W^{1,2}\left(  \mathbb{T}%
^{3}\right)  \right)  ^{3}\times\left(  W^{1,2}\left(  \mathbb{T}^{3}\right)
\right)  ^{3}$ such that
\[
\left\Vert v_{1}-v_{0}\right\Vert _{W^{1,2}}\leq1.
\]
Also, for $i\in\left\{  0,1\right\}  $ consider%
\[
-\varepsilon\Delta\rho_{i}+\delta\left(  \rho_{i}-M\right)
+\operatorname{div}\left(  \rho_{i}\omega_{\delta}\ast v_{i}\right)  =0
\]
and%
\begin{align*}
-\mathcal{A}S\left(  v_{i}\right)   &  =-\dfrac{\delta}{2}\left(  \rho
_{i}v_{i}-\int_{\mathbb{T}^{3}}\rho_{i}v_{i}\right)  -\operatorname{div}%
\left(  \rho_{i}\omega_{\delta}\ast v_{i}\otimes v_{i}\right)  -\nabla\left(
\omega_{\delta}\ast\rho_{i}^{\gamma}\right)  \\
&  -\varepsilon\left(  \nabla v_{i}\nabla\rho_{i}-\int\nabla v_{i}\nabla
\rho_{i}\right)  +\omega_{\delta}\ast g.
\end{align*}
First of all, the estimates $\left(  \text{\ref{Nouvelle_estimation_reg_rho}%
}\right)  $ allow us to conclude that%
\[
\left\Vert \rho_{0}\right\Vert _{W^{2,2}}+\left\Vert \rho_{1}\right\Vert
_{W^{2,2}}\leq C\left(  M,\varepsilon,\delta\right)  .
\]
The difference $\left(  \rho_{1}-\rho_{0}\right)  $ verifies%
\begin{equation}
-\varepsilon\Delta\left(  \rho_{1}-\rho_{0}\right)  +\delta\left(  \left(
\rho_{1}-\rho_{0}\right)  \right)  +\operatorname{div}\left(  \rho_{1}%
\omega_{\delta}\ast v_{1}-\rho_{0}\omega_{\delta}\ast v_{0}\right)
=0,\label{diferenta}%
\end{equation}
which provides the following estimate:%
\begin{equation}
\delta \int_{\mathbb{T}^{3}}\left\vert \rho_{1}-\rho_{0}\right\vert \leq
\int_{\mathbb{T}^{3}}\left\vert \operatorname{div}\left(  \rho_{0}\left(
v_{1}-v_{0}\right)  \right)  \right\vert \leq\left\Vert \rho_{0}\right\Vert
_{W^{1,2}}\left\Vert v_{1}-v_{0}\right\Vert _{W^{1,2}}%
.\label{dependenta_norma_L1}%
\end{equation}
Next, we see that%
\begin{align}
\left\Vert \rho_{1}-\rho_{0}\right\Vert _{W^{1,2}} &  \leq C\left\Vert
\rho_{1}\omega_{\delta}\ast v_{1}-\rho_{0}\omega_{\delta}\ast v_{0}\right\Vert
_{L^{2}}\nonumber\\
&  \leq C\left\Vert \omega_{\delta}\ast v_{1}\right\Vert _{L^{\infty}%
}\left\Vert \rho_{1}-\rho_{0}\right\Vert _{L^{2}}+C\left\Vert \rho
_{0}\right\Vert _{L^{6}}\left\Vert \omega_{\delta}\ast v_{1}-\omega_{\delta
}\ast v_{0}\right\Vert _{L^{3}}\nonumber\\
&  \leq C\left(  M,\varepsilon,\delta\right)  \left\Vert \rho_{1}-\rho
_{0}\right\Vert _{L^{1}}^{\frac{2}{5}}+C\left(  M,\varepsilon,\delta\right)
\left\Vert v_{1}-v_{0}\right\Vert _{W^{1,2}}\nonumber\\
&  \leq C\left(  M,\varepsilon,\delta\right)  \left\Vert v_{1}-v_{0}%
\right\Vert _{W^{1,2}}^{\frac{2}{5}}+C\left(  M,\varepsilon,\delta\right)
\left\Vert v_{1}-v_{0}\right\Vert _{W^{1,2}}.\label{continuite_T_1}%
\end{align}
Moreover, multiplying $\left(  \text{\ref{diferenta}}\right)  $
with $-\Delta\left(  \rho_{1}-\rho_{0}\right)  $ one gets%
\begin{align*}
&  \varepsilon\left\Vert \Delta\left(  \rho_{1}-\rho_{0}\right)  \right\Vert
_{L^{2}}^{2}+\delta\left\Vert \nabla\left(  \rho_{1}-\rho_{0}\right)
\right\Vert _{L^{2}}^{2}\\
&  \leq\left\Vert \Delta\left(  \rho_{1}-\rho_{0}\right)  \right\Vert _{L^{2}%
}\left\Vert \operatorname{div}\left(  \rho_{1}\omega_{\delta}\ast v_{1}%
-\rho_{0}\omega_{\delta}\ast v_{0}\right)  \right\Vert _{L^{2}},
\end{align*}
from which we deduce that%
\begin{equation}
\left\Vert \Delta\left(  \rho_{1}-\rho_{0}\right)  \right\Vert _{L^{2}%
}+\left\Vert \nabla\left(  \rho_{1}-\rho_{0}\right)  \right\Vert _{L^{2}}\leq
C\left(  M,\varepsilon,\delta\right)  \left(  \left\Vert v_{1}-v_{0}%
\right\Vert _{W^{1,2}}^{\frac{2}{5}}+\left\Vert v_{1}-v_{0}\right\Vert
_{W^{1,2}}\right)  \label{continuite_T_2}%
\end{equation}
Next, by taking the difference of the velocity equations we end up with%
\begin{align*}
\mathcal{A}\left(  S\left(  v_{1}\right)  -S\left(  v_{0}\right)  \right)
&  =\frac{\delta}{2}\left(  \rho_{0}v_{0}-\rho_{1}v_{1}\right)  -\frac{\delta
}{2}\left(  \int_{\mathbb{T}^{3}}\rho_{0}v_{0}-\int_{\mathbb{T}^{3}}\rho
_{1}v_{1}\right)  \\
&  +\operatorname{div}\left(  \rho_{1}\omega_{\delta}\ast v_{1}\otimes
v_{1}-\rho_{0}\omega_{\delta}\ast v_{0}\otimes v_{0}\right)  \\
&  +\nabla\omega_{\delta}\ast\left(  \rho_{1}^{\gamma}-\rho_{0}^{\gamma
}\right)  +\varepsilon\left(  \nabla v_{1}\nabla\rho_{1}-\nabla v_{0}%
\nabla\rho_{0}\right)  -\varepsilon\left(  \int_{\mathbb{T}^{3}}\nabla
v_{1}\nabla\rho_{1}-\int_{\mathbb{T}^{3}}\nabla v_{0}\nabla\rho_{0}\right)  ,
\end{align*}
from which we deduce that%
\begin{align*}
\left\Vert S\left(  v_{1}\right)  -S\left(  v_{0}\right)  \right\Vert
_{W^{1,2}} &  \lesssim\delta\left\Vert \rho_{0}v_{0}-\rho_{1}v_{1}\right\Vert
_{L^{\frac{6}{5}}}+\left\Vert \rho_{1}\omega_{\delta}\ast v_{1}\otimes
v_{1}-\rho_{0}\omega_{\delta}\ast v_{0}\otimes v_{0}\right\Vert _{L^{2}}\\
&  +\left\Vert \rho_{1}^{\gamma}-\rho_{0}^{\gamma}\right\Vert _{L^{2}%
}+2\varepsilon\left\Vert \nabla v_{1}\nabla\rho_{1}-\nabla v_{0}\nabla\rho
_{0}\right\Vert _{L^{\frac{6}{5}}}.
\end{align*}
Using $\left(  \text{\ref{continuite_T_1}}\right)  $, the first term is
treated as follows%
\begin{align}
\left\Vert \rho_{0}v_{0}-\rho_{1}v_{1}\right\Vert _{L^{\frac{6}{5}}} &
\leq\left\Vert v_{0}\right\Vert _{L^{3}}\left\Vert \left(  \rho_{1}-\rho
_{0}\right)  \right\Vert _{L^{2}}+\left\Vert \rho_{1}\right\Vert _{L^{\frac
{3}{2}}}\left\Vert v_{1}-v_{0}\right\Vert _{L^{6}}\nonumber\\
&  \leq C\left(  M,\varepsilon,\delta\right)  (\left\Vert v_{1}-v_{0}%
\right\Vert _{W^{1,2}}^{\frac{2}{5}}+\left\Vert v_{1}-v_{0}\right\Vert
_{W^{1,2}}).\label{Term0}%
\end{align}
The second term is estimated as follows%
\begin{align}
&  \left\Vert \rho_{1}\omega_{\delta}\ast v_{1}\otimes v_{1}-\rho_{0}%
\omega_{\delta}\ast v_{0}\otimes v_{0}\right\Vert _{L^{2}}\nonumber\\
&  \leq\left\Vert \omega_{\delta}\ast v_{1}\otimes v_{1}\right\Vert _{L^{3}%
}\left\Vert \rho_{1}-\rho_{0}\right\Vert _{L^{6}}+\left\Vert \rho
_{0}\right\Vert _{L^{6}}\left\Vert \omega_{\delta}\ast v_{1}\otimes
v_{1}-\omega_{\delta}\ast v_{0}\otimes v_{0}\right\Vert _{L^{3}}\nonumber\\
&  \leq\left\Vert v_{1}\right\Vert _{L^{6}}^{2}\left\Vert \rho_{1}-\rho
_{0}\right\Vert _{W^{1,2}}+\left\Vert \rho_{0}\right\Vert _{L^{6}}\left\{
\left\Vert v_{1}\right\Vert _{L^{6}}\left\Vert \omega_{\delta}\ast
v_{1}-\omega_{\delta}\ast v_{0}\right\Vert _{L^{6}}+\left\Vert \omega_{\delta
}\ast v_{0}\right\Vert _{L^{6}}\left\Vert v_{1}-v_{0}\right\Vert _{L^{6}%
}\right\}  \nonumber\\
&  \leq C\left(  M,\varepsilon,\delta\right)  (\left\Vert v_{1}-v_{0}%
\right\Vert _{W^{1,2}}^{\frac{2}{5}}+\left\Vert v_{1}-v_{0}\right\Vert
_{W^{1,2}}).  \label{Term1}%
\end{align}
The third term is treated using the Sobolev inequality along with $\left(
\text{\ref{Nouvelle_estimation_reg_rho}}\right)  $ and $\left(
\text{\ref{continuite_T_2}}\right)  $%
\begin{align}
\left\Vert \rho_{1}^{\gamma}-\rho_{0}^{\gamma}\right\Vert _{L^{2}} &
\leq\left(  \left\Vert \rho_{1}\right\Vert _{L^{\infty}}^{\gamma-1}+\left\Vert
\rho_{0}\right\Vert _{L^{\infty}}^{\gamma-1}\right)  \left\Vert \rho_{1}%
-\rho_{0}\right\Vert _{L^{2}}\nonumber\\
&  \leq C\left(  \left\Vert \rho_{1}\right\Vert _{W^{2,2}}^{\gamma
-1}+\left\Vert \rho_{0}\right\Vert _{W^{2,2}}^{\gamma-1}\right)  \left\Vert
\rho_{1}-\rho_{0}\right\Vert _{W^{1,2}}\nonumber\\
&  \leq C\left(  M,\varepsilon,\delta\right)  \left\Vert v_{1}-v_{0}%
\right\Vert _{W^{1,2}}^{\frac{2}{5}}+\left\Vert v_{1}-v_{0}\right\Vert
_{W^{1,2}}). \label{Term2}%
\end{align}
The forth term is treated with the help of relations $\left(
\text{\ref{Nouvelle_estimation_reg_rho}}\right)  $,$\left(
\text{\ref{continuite_T_2}}\right)  $ and $\left(  \text{\ref{continuite_T_1}%
}\right)  $
\begin{align}
&  \left\Vert \nabla v_{1}\nabla\rho_{1}-\nabla v_{0}\nabla\rho_{0}\right\Vert
_{L^{\frac{6}{5}}}\nonumber\\
&  \leq\left\Vert \nabla v_{1}\right\Vert _{L^{2}}\left\Vert \nabla(\rho
_{1}-\rho_{0})\right\Vert _{L^{3}}+\left\Vert \nabla\rho_{1}\right\Vert
_{L^{3}}\left\Vert \nabla v-\nabla v_{0}\right\Vert _{L^{2}}\nonumber\\
&  \leq C\left(  M,\varepsilon,\delta\right)  \left\Vert v_{1}-v_{0}%
\right\Vert _{W^{1,2}}^{\frac{2}{5}}+\left\Vert v_{1}-v_{0}\right\Vert
_{W^{1,2}}).\label{Term3}%
\end{align}
From $\left(  \text{\ref{Term0}}\right)  $, $\left(  \text{\ref{Term1}%
}\right)  $, $\left(  \text{\ref{Term2}}\right)  $, $\left(  \text{\ref{Term3}%
}\right)  $ we obtain that
\begin{equation}
\left\Vert S\left(  v_{1}\right)  -S\left(  v_{0}\right)  \right\Vert
_{W^{1,2}}\leq C\left(  M,\varepsilon,\delta\right)  \left\Vert v_{1}%
-v_{0}\right\Vert _{W^{1,2}}^{\frac{2}{5}}+\left\Vert v_{1}-v_{0}\right\Vert
_{W^{1,2}}).\label{continuite_S_1}%
\end{equation}
Of course, the above relation shows that $S$ is continuos. Moreover, using
$\left(  \text{\ref{compactness_in_u}}\right)  $ we get that the operator $S$
is compact.
\medskip\noindent2) \textit{The set }$\mathcal{P}$ \textit{defined in }$\left(  \text{\ref{set_P}}\right)  $\textit{ is bounded}. In the
following we prove that the set%
\[
\left\{  u\in\left(  W^{1,2}\left(  \mathbb{T}^{3}\right)  \right)
^{3}:u=\lambda S\left(  u\right)  \text{ for some }\lambda\in(0,1]\right\}
\]
is bounded. Thus, consider $\lambda\in(0,1]$ and $u\in\left(  W^{1,2}\left(
\mathbb{T}^{3}\right)  \right)  ^{3}$ such that $u=\lambda S\left(  u\right)
$. Obviously, one has%
\begin{equation}
\left\{
\begin{array}
[c]{l}%
-\varepsilon\Delta\rho+\delta\left(  \rho-M\right)  +\operatorname{div}\left(
\rho\omega_{\delta}\ast u\right)  =0,\\
\dfrac{\delta}{2}\left(  \rho u-%
%TCIMACRO{\dint _{\mathbb{T}^{3}}}%
%BeginExpansion
{\displaystyle\int_{\mathbb{T}^{3}}}
%EndExpansion
\rho u\right)  -\dfrac{1}{\lambda}\mathcal{A}u+\operatorname{div}\left(
\rho\omega_{\delta}\ast u\otimes u\right)  +\nabla\left(  \omega_{\delta}%
\ast\rho^{\gamma}\right)  +\varepsilon\left(  \nabla u\nabla\rho-%
%TCIMACRO{\dint _{\mathbb{T}^{3}}}%
%BeginExpansion
{\displaystyle\int_{\mathbb{T}^{3}}}
%EndExpansion
\nabla u\nabla\rho\right)  =\omega_{\delta}\ast g,\\%
%TCIMACRO{\dint _{\mathbb{T}^{3}}}%
%BeginExpansion
{\displaystyle\int_{\mathbb{T}^{3}}}
%EndExpansion
\rho=M,\text{ }%
%TCIMACRO{\dint _{\mathbb{T}^{3}}}%
%BeginExpansion
{\displaystyle\int_{\mathbb{T}^{3}}}
%EndExpansion
u=0.
\end{array}
\right.  \label{Leray-Schauder}%
\end{equation}
Observe that%
\begin{align}
&  \left\langle \operatorname{div}\left(  \rho\omega_{\delta}\ast u\otimes
u\right)  +\varepsilon\nabla u\nabla\rho,u\right\rangle \\
&  =\frac{1}{2}\operatorname{div}\left(  \rho\omega_{\delta}\ast u\left\vert
u\right\vert ^{2}\right)  +\operatorname{div}\left(  \rho\omega_{\delta}\ast
u\right)  \frac{\left\vert u\right\vert ^{2}}{2}+\dfrac{\varepsilon}%
{2}\left\langle \nabla\left\vert u\right\vert ^{2},\nabla\rho\right\rangle
\nonumber\\
&  =\frac{1}{2}\operatorname{div}\left(  \rho\omega_{\delta}\ast u\left\vert
u\right\vert ^{2}\right)  +\left(  \varepsilon\Delta\rho-\delta\left(
\rho-M\right)  \right)  \frac{\left\vert u\right\vert ^{2}}{2}+\dfrac
{\varepsilon}{2}\left\langle \nabla\left\vert u\right\vert ^{2},\nabla
\rho\right\rangle \nonumber\\
&  =\frac{1}{2}\operatorname{div}\left(  \rho\omega_{\delta}\ast u\left\vert
u\right\vert ^{2}\right)  +\dfrac{\varepsilon}{2}\operatorname{div}\left(
\left\vert u\right\vert ^{2}\nabla\rho\right)  -\delta\left(  \rho-M\right)
\frac{\left\vert u\right\vert ^{2}}{2}.\label{calcul_div(u_otimesu)}%
\end{align}
Next%
\[
\int u\nabla\left(  \omega_{\delta}\ast\rho^{\gamma}\right)  =-\int
\rho^{\gamma}\operatorname{div}\omega_{\delta}\ast u=\frac{4}{\gamma\left(
\gamma-1\right)  }\int_{\mathbb{T}^{3}}\left\vert \nabla\rho^{\frac{\gamma}%
{2}}\right\vert ^{2}+\gamma\delta\left(  \int_{\mathbb{T}^{3}}\rho^{\gamma
}-M\int_{\mathbb{T}^{3}}\rho^{\gamma-1}\right).
\]
Thus, we have that%
\begin{align}
&  -\frac{1}{\lambda}\int_{\mathbb{T}^{3}}\left\langle \mathcal{A}%
u,u\right\rangle +\frac{\delta M}{2}\int_{\mathbb{T}^{3}}\left\vert
u\right\vert ^{2}+\frac{4\varepsilon}{\gamma\left(  \gamma-1\right)  }%
\int_{\mathbb{T}^{3}}\left\vert \nabla\rho^{\frac{\gamma}{2}}\right\vert
^{2}+\gamma\delta\int_{\mathbb{T}^{3}}\rho^{\gamma}\nonumber\\
&  =\int_{\mathbb{T}^{3}}\omega_{\delta}\ast gu+\gamma\delta M\int
_{\mathbb{T}^{3}}\rho^{\gamma-1}.\label{bound_vitesse_ind_eps_delta}%
\end{align}
We use Young's inequality in order to obtain that
\begin{align*}
\gamma\delta M\int_{\mathbb{T}^{3}}\rho^{\gamma-1}+\int_{\mathbb{T}^{3}}%
\omega_{\delta}\ast gu &  \leq\gamma\delta M\left(  \int_{\mathbb{T}^{3}}%
\rho^{\gamma}\right)  ^{\frac{\gamma-1}{\gamma}}+\left\Vert g\right\Vert
_{L^{\frac{6}{5}}}\left\Vert u\right\Vert _{L^{6}}\\
&  \leq C\left(  M,\gamma\right)  (\left\Vert g\right\Vert _{L^{\frac{6}{5}}%
}^{2}+\delta)+\frac{1}{2\lambda}\int_{\mathbb{T}^{3}}\left\langle
\mathcal{A}u,u\right\rangle +\frac{\gamma\delta}{2}\int_{\mathbb{T}^{3}}%
\rho^{\gamma}.
\end{align*}
We obtain the existence of a constant $C\left(  M,\gamma\right)  $ depending
only on $M$ and $\gamma$ such that
\begin{align}
&  -\frac{1}{2\lambda}\int_{\mathbb{T}^{3}}\left\langle \mathcal{A}%
u,u\right\rangle +\frac{\delta M}{2}\int_{\mathbb{T}^{3}}\left\vert
u\right\vert ^{2}+\frac{4\varepsilon}{\gamma\left(  \gamma-1\right)  }%
\int_{\mathbb{T}^{3}}\left\vert \nabla\rho^{\frac{\gamma}{2}}\right\vert
^{2}+\frac{\gamma\delta}{2}\int_{\mathbb{T}^{3}}\rho^{\gamma}\nonumber\\
&  \leq C\left(  M,\gamma\right)  (\left\Vert g\right\Vert _{L^{\frac{6}{5}}%
}^{2}+\delta)\leq C\left(  M,\gamma\right)  (\left\Vert g\right\Vert
_{L^{\frac{6}{5}}}^{2}+1).\label{energy_estimate_eps_delta}%
\end{align}
The last estimate implies that $\mathcal{P}$ is a bounded set of $\left(
W^{1,2}\left(  \mathbb{T}^{3}\right)  \right)  ^{3}$. Having proved that the
operator $S$ verifies the hypothesis announced in Theorem \ref{Schauder_Leray}
we conclude that$\mathcal{\ }S$ admits a fixed point. This concludes the proof
of Proposition \ref{Point_fix}.\end{proof}

As an immediate consequence of Proposition \ref{Point_fix} we get the
following result

\begin{corollary}
\label{Corollary_point_fix}Consider $\left(  \varepsilon,\delta\right)
\in\left(  0,1\right)  ^{2}$. For all $M>0$ and $g\in(L^{\frac{3\left(
\gamma-1\right)  }{2\gamma-1}}\left(  \mathbb{T}^{3}\right)  )^{3}$ with $%
%TCIMACRO{\dint _{\mathbb{T}^{3}}}%
%BeginExpansion
{\displaystyle\int_{\mathbb{T}^{3}}}
%EndExpansion
g=0$, there exists a solution $\left(  \rho^{\varepsilon,\delta}%
,u^{\varepsilon,\delta}\right)  \in W^{2,2}\left(  \mathbb{T}^{3}\right)
\times(W^{2,\frac{3}{2}}\left(  \mathbb{T}^{3}\right)  )^{3}$ of $\left(
\text{\ref{Approximare_eps_delta}}\right)  $ verifying the following
estimates:%
\begin{equation}
\left\{
\begin{array}
[c]{l}%
-\frac{1}{2}%
%TCIMACRO{\dint _{\mathbb{T}^{3}}}%
%BeginExpansion
{\displaystyle\int_{\mathbb{T}^{3}}}
%EndExpansion
\left\langle \mathcal{A}u^{\varepsilon,\delta},u^{\varepsilon,\delta
}\right\rangle +\frac{4\varepsilon}{\gamma\left(  \gamma-1\right)  }%
%TCIMACRO{\dint _{\mathbb{T}^{3}}}%
%BeginExpansion
{\displaystyle\int_{\mathbb{T}^{3}}}
%EndExpansion
\left\vert \nabla\left(  \rho^{\varepsilon,\delta}\right)  ^{\frac{\gamma}{2}%
}\right\vert ^{2}+\frac{\gamma\delta}{2}%
%TCIMACRO{\dint _{\mathbb{T}^{3}}}%
%BeginExpansion
{\displaystyle\int_{\mathbb{T}^{3}}}
%EndExpansion
(\rho^{\varepsilon,\delta})^{\gamma}\leq C\left(  M,\gamma\right)  (\left\Vert
g\right\Vert _{L^{\frac{6}{5}}}^{2}+1),\\
\left\Vert \Delta\rho^{\varepsilon,\delta}\right\Vert _{L^{\frac{3}{2}}%
}+\left\Vert \nabla\rho^{\varepsilon,\delta}\right\Vert _{L^{2}}%
^{2}+\left\Vert \mathcal{A}u^{\varepsilon,\delta}\right\Vert _{L^{\frac{6}{5}%
}}\leq C(M,\varepsilon,\left\Vert g\right\Vert _{L^{\frac{6}{5}}}),
\end{array}
\right.  \label{estimari_eps_delta_prop}%
\end{equation}

\end{corollary}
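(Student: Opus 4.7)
\textbf{Proof proposal for Corollary \ref{Corollary_point_fix}.} The plan is to package the output of Proposition \ref{Point_fix} together with the estimates established along its proof. Applying Proposition \ref{Point_fix} yields a fixed point $u^{\varepsilon,\delta}\in (W^{1,2}(\mathbb{T}^{3}))^{3}$ of the operator $S$, so that $u^{\varepsilon,\delta}=S(u^{\varepsilon,\delta})$. Letting $\rho^{\varepsilon,\delta}\in W^{2,2}(\mathbb{T}^{3})$ be the unique solution of the associated transport-diffusion equation provided by Proposition \ref{transport_stationar}, the pair $(\rho^{\varepsilon,\delta},u^{\varepsilon,\delta})$ solves \eqref{Approximare_eps_delta} by construction, and the regularity $u^{\varepsilon,\delta}\in (W^{2,\frac{3}{2}}(\mathbb{T}^{3}))^{3}$ is exactly \eqref{compactness_in_u}.

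For the first estimate in \eqref{estimari_eps_delta_prop}, I would specialize the energy identity \eqref{bound_vitesse_ind_eps_delta}–\eqref{energy_estimate_eps_delta} to $\lambda=1$, which is legitimate since the fixed point provides a genuine solution of the system. Dropping the nonnegative term $\frac{\delta M}{2}\int|u^{\varepsilon,\delta}|^{2}$ and using the Sobolev embedding $W^{1,2}\hookrightarrow L^{6}$ together with Young's inequality to absorb $\int \omega_{\delta}\ast g\,u^{\varepsilon,\delta}$ into the dissipation supplied by $-\int\langle\mathcal{A}u^{\varepsilon,\delta},u^{\varepsilon,\delta}\rangle$ (via \eqref{ellipticity_of_diffusion_operator_1} or \eqref{ellipticity_of_diffusion_operator_2}) gives the stated bound with a constant depending only on $M$, $\gamma$ and $\|g\|_{L^{6/5}}$. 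Note that by hypothesis $g\in L^{\frac{3(\gamma-1)}{2\gamma-1}}\subset L^{6/5}$ for $\gamma>3$.

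For the second estimate, I would exploit the improved integrability of each term in the momentum equation that was already recorded in \eqref{terms_in_u}. Indeed, combined with the fact that $g\in L^{6/5}$ and $\omega_{\delta}\ast g\in L^{\infty}$, the identity $\mathcal{A}u^{\varepsilon,\delta}=$ (all the other terms in the second line of \eqref{Approximare_eps_delta}) immediately yields $\|\mathcal{A}u^{\varepsilon,\delta}\|_{L^{6/5}}\leq C(M,\varepsilon,\|g\|_{L^{6/5}})$ once the previous energy bound controls $\|u^{\varepsilon,\delta}\|_{W^{1,2}}$ (via $\mathcal{A}$-ellipticity) and $\|\rho^{\varepsilon,\delta}\|_{L^{\gamma}}$. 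To obtain $\nabla\rho^{\varepsilon,\delta}\in L^{2}$ I would multiply the density equation by $\rho^{\varepsilon,\delta}$, integrate, and use the bound on $\omega_{\delta}\ast u^{\varepsilon,\delta}$ in $L^{\infty}$ (a consequence of $\|u^{\varepsilon,\delta}\|_{L^{2}}$ controlled and $\omega_{\delta}$ smooth) exactly as in the proof of Proposition \ref{transport_stationar}. Finally, to estimate $\Delta\rho^{\varepsilon,\delta}$ in $L^{3/2}$, I would isolate $-\varepsilon\Delta\rho^{\varepsilon,\delta}=-\delta(\rho^{\varepsilon,\delta}-M)-\operatorname{div}(\rho^{\varepsilon,\delta}\omega_{\delta}\ast u^{\varepsilon,\delta})$ in the mass equation and bound each right-hand side term in $L^{3/2}$ using $\|\rho^{\varepsilon,\delta}\|_{L^{2}}$, $\|\nabla\rho^{\varepsilon,\delta}\|_{L^{2}}$ and $\|\omega_{\delta}\ast u^{\varepsilon,\delta}\|_{W^{1,\infty}}\leq C(\delta)\|u^{\varepsilon,\delta}\|_{L^{2}}$.

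There is no substantial obstacle here: the argument is a bookkeeping of quantitative versions of the qualitative bounds proved in Propositions \ref{transport_stationar} and \ref{Point_fix}. The only point requiring mild care is tracking the dependence of constants on $\varepsilon$ and $\delta$, and checking that no term in the fixed-point energy identity was inadvertently absorbed on the wrong side — precisely why I would first fix $\lambda=1$ and rerun the integration against $u^{\varepsilon,\delta}$ rather than invoking the $\lambda$-dependent bound directly.
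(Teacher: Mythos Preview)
Your overall plan is sound and matches the paper's for the existence part and for the first energy estimate (specializing \eqref{energy_estimate_eps_delta} to $\lambda=1$). However, there is a real gap in your argument for the second line of \eqref{estimari_eps_delta_prop}: the constant there must be independent of $\delta$ (this is precisely what makes the $\delta\to 0$ passage in Section~\ref{delta_goes_to_0} work), and your route cannot deliver that.

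Concretely, you invoke \eqref{terms_in_u}, but those bounds carry an explicit $C(M,\varepsilon,\delta)$ because they rely on $\|\omega_\delta\ast v\|_{L^\infty}$ and $\|\nabla(\omega_\delta\ast\rho^\gamma)\|_{L^\infty}$. You then propose to control $\|\nabla\rho\|_{L^2}$ via $\|\omega_\delta\ast u\|_{L^\infty}\leq C(\delta)\|u\|_{L^2}$ and $\|\Delta\rho\|_{L^{3/2}}$ via $\|\omega_\delta\ast u\|_{W^{1,\infty}}\leq C(\delta)\|u\|_{L^2}$; both estimates blow up like a negative power of $\delta$. You also appeal to ``$\|\rho\|_{L^\gamma}$ controlled by the energy'', but the first line only gives $\delta\int\rho^\gamma\leq C$, hence $\|\rho\|_{L^\gamma}^\gamma\leq C/\delta$. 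So as written, your second estimate would read $C(M,\varepsilon,\delta,\|g\|_{L^{6/5}})$, not $C(M,\varepsilon,\|g\|_{L^{6/5}})$.

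The fix, which is what the paper does, is to systematically replace the $L^\infty$ smoothing estimates by the $\delta$-free bound $\|\omega_\delta\ast f\|_{L^p}\leq\|f\|_{L^p}$ (since $\|\omega_\delta\|_{L^1}=1$) and close by interpolation. For $\|\nabla\rho\|_{L^2}$: multiply the mass equation by $\rho$ and write $\int\omega_\delta\ast\operatorname{div}u\,\rho^2\leq\|\operatorname{div}u\|_{L^2}\|\rho\|_{L^4}^2$, then interpolate $\|\rho\|_{L^4}$ between $\|\rho\|_{L^1}=M$ and $\|\rho\|_{L^6}\lesssim\|\nabla\rho\|_{L^2}+M$ and absorb via Young. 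For $\|\Delta\rho\|_{L^{3/2}}$: estimate $\|\rho\,\omega_\delta\ast\operatorname{div}u\|_{L^{3/2}}\leq\|\rho\|_{L^6}\|\operatorname{div}u\|_{L^2}$ and $\|(\omega_\delta\ast u)\nabla\rho\|_{L^{3/2}}\leq\|u\|_{L^6}\|\nabla\rho\|_{L^2}$. For $\|\mathcal{A}u\|_{L^{6/5}}$: rebound each right-hand side term in $L^{6/5}$ using $\|\rho\|_{L^6}$, $\|\nabla\rho\|_{L^3}$ (from Sobolev on $\Delta\rho\in L^{3/2}$), $\|u\|_{L^6}$, $\|\nabla u\|_{L^2}$, and treat the pressure term via $\|\nabla(\omega_\delta\ast\rho^\gamma)\|_{L^{3/2}}\leq\|\rho^{\gamma/2}\|_{L^6}\|\nabla\rho^{\gamma/2}\|_{L^2}\lesssim\|\nabla\rho^{\gamma/2}\|_{L^2}^2$, which is controlled $\delta$-independently by the first energy line. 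None of these steps uses $\|\omega_\delta\|_{L^q}$ for $q>1$, so no $\delta$ enters.
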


\begin{proof}
The existence part of Corollary \ref{Corollary_point_fix} follows by observing
that a fixed point $u\in\left(  W^{1,2}\left(  \mathbb{T}^{3}\right)  \right)
^{3}$ of the operator $S$ defined by $\left(  \text{\ref{definitie_S_}%
}\right)  $ turns out to verify $\left(  \text{\ref{Approximare_eps_delta}%
}\right)  $. In order to finish the proof we must show that the pair $\left(
\rho^{\varepsilon,\delta},u^{\varepsilon,\delta}\right)  \in W^{2,2}\left(
\mathbb{T}^{3}\right)  \times\left(  W^{1,\frac{3}{2}}\left(  \mathbb{T}^{3}\right)
\right)  ^{3}$ constructed above verifies the announced estimates. We drop the
$\varepsilon,\delta$ upper scripts in order to render the computations easier
to follow. The first estimate of $\left(  \text{\ref{estimari_eps_delta_prop}}\right)  $ is nothing else but $\left(  \text{\ref{energy_estimate_eps_delta}%
}\right)  $ with $\lambda=1$. Of course, we will use it in order to prove the second inequality from $\left(  \text{\ref{estimari_eps_delta_prop}}\right)  $.
We begin with
\begin{align*}
\varepsilon\left\Vert \nabla\rho\right\Vert _{L^{2}}^{2}+\delta\left\Vert
\rho\right\Vert _{L^{2}}^{2} &  =\lambda\left(  \delta M^{2}+\int
_{\mathbb{T}^{3}}\omega_{\delta}\ast\operatorname{div}u\rho^{2}\right)  \\
&  \leq M^{2}+\left\Vert \operatorname{div}u\right\Vert _{L^{2}}\left\Vert
\rho\right\Vert _{L^{4}}^{2}\leq M^{2}+C\left(  1+\left\Vert g\right\Vert
_{L^{\frac{6}{5}}}\right)  \left\Vert \rho\right\Vert _{L^{1}}^{\frac{1}{5}%
}\left\Vert \rho\right\Vert _{L^{6}}^{\frac{9}{5}}%
\end{align*}
which by means of the Young inequality yields%
\begin{equation}
\varepsilon\left\Vert \nabla\rho\right\Vert _{L^{2}}^{2}+\delta\left\Vert
\rho\right\Vert _{L^{2}}^{2}\leq C(M,\varepsilon,\left\Vert g\right\Vert
_{L^{\frac{6}{5}}}).\label{bound_rho_ind_delta}%
\end{equation}
The estimate for the laplacian of $\rho$ is recovered using $\left(
\text{\ref{bound_rho_ind_delta}}\right)  $ and the Poincar\'{e} inequality:
\begin{align*}
\varepsilon\left\Vert \Delta\rho\right\Vert _{L^{\frac{3}{2}}} &  \leq
\delta\left\Vert \rho-M\right\Vert _{L^{\frac{3}{2}}}+\left\Vert
\rho\operatorname{div}\omega_{\delta}\ast u\right\Vert _{L^{\frac{3}{2}}%
}+\left\Vert \omega_{\delta}\ast u\nabla\rho\right\Vert _{L^{\frac{3}{2}}}\\
&  \leq\left\Vert \nabla\rho\right\Vert _{L^{2}}+\left\Vert \rho\right\Vert
_{L^{6}}\left\Vert \operatorname{div}\omega_{\delta}\ast u\right\Vert _{L^{2}%
}+\left\Vert \omega_{\delta}\ast u\right\Vert _{L^{6}}\left\Vert \nabla
\rho\right\Vert _{L^{2}}\\
&  \leq C\left(  M,\varepsilon,\left\Vert g\right\Vert _{L^{\frac{6}{5}}%
}\right)  .
\end{align*}
The last estimate along with Sobolev's inequality implies that%
\begin{equation}
\left\Vert \nabla\rho\right\Vert _{L^{3}}\leq C\left(  M,\varepsilon
,\left\Vert g\right\Vert _{L^{\frac{6}{5}}}\right)  .\label{rho_in_L3}%
\end{equation}
In the following lines, we analyze the terms appearing in the velocity
equation. Using $\left(  \text{\ref{bound_rho_ind_delta}}\right)  $ and
$\left(  \text{\ref{rho_in_L3}}\right)  $ we obtain%
\begin{equation}
\left\Vert \rho u\right\Vert _{L^{3}}+\left\Vert \nabla u\nabla\rho\right\Vert
_{L^{6/5}}\leq\left\Vert \rho\right\Vert _{L^{6}}\left\Vert u\right\Vert
_{L^{6}}+\left\Vert \nabla u\right\Vert _{L^{2}}\left\Vert \nabla
\rho\right\Vert _{L^{3}}\leq C\left(  M,\varepsilon,\left\Vert g\right\Vert
_{L^{\frac{6}{5}}}\right)  .\label{viteza_1}%
\end{equation}
Next, writing that%
\begin{align}
\left\Vert \nabla\left(  \omega_{\delta}\ast\rho^{\gamma}\right)  \right\Vert
_{L^{3/2}} &  =\left\Vert \omega_{\delta}\ast\left(  \rho^{\gamma/2}\nabla
\rho^{\gamma/2}\right)  \right\Vert _{L^{3/2}}\leq\left\Vert \rho^{\gamma
/2}\right\Vert _{L^{6}}\left\Vert \nabla\rho^{\gamma/2}\right\Vert _{L^{2}%
}\nonumber\\
&  \leq C\left\Vert \nabla\rho^{\gamma/2}\right\Vert _{L^{2}}^{2}\leq C\left(
M,\varepsilon,\left\Vert g\right\Vert _{L^{\frac{6}{5}}}\right)
.\label{viteza_2}%
\end{align}
Notice that we also have that%
\begin{align}
\left\Vert \operatorname{div}\left(  \rho\omega_{\delta}\ast u\otimes
u\right)  \right\Vert _{L^{\frac{6}{5}}} &  =\left\Vert \left(  \nabla
\rho\cdot\omega_{\delta}\ast u\right)  u\right\Vert _{L^{\frac{6}{5}}%
}+\left\Vert \rho\operatorname{div}(\omega_{\delta}\ast u)u\right\Vert
_{L^{\frac{6}{5}}}+\left\Vert \rho\omega_{\delta}\ast u\cdot\nabla
u\right\Vert _{L^{\frac{6}{5}}}\nonumber\\
&  \leq\left\Vert \nabla\rho\right\Vert _{L^{2}}\left\Vert u\right\Vert
_{L^{6}}^{2}+\left\Vert \rho\right\Vert _{L^{6}}\left\Vert \operatorname{div}%
u\right\Vert _{L^{2}}\left\Vert u\right\Vert _{L^{6}}+\left\Vert
\rho\right\Vert _{L^{6}}\left\Vert u\right\Vert _{L^{6}}\left\Vert \nabla
u\right\Vert _{L^{2}}\nonumber\\
&  \leq C\left(  M,\varepsilon,\left\Vert g\right\Vert _{L^{\frac{6}{5}}%
}\right)  \label{viteza_3}%
\end{align}
From $\left(  \text{\ref{viteza_1}}\right)  $, $\left(  \text{\ref{viteza_2}%
}\right)  $ and $\left(  \text{\ref{viteza_3}}\right)  $ it follows that
$\mathcal{A}u\in\left(  L^{6/5}\left(  \mathbb{T}^{3}\right)  \right)  ^{3}$
with
\[
\left\Vert \mathcal{A}u\right\Vert _{L^{\frac{6}{5}}}\leq C\left(
M,\varepsilon,\left\Vert g\right\Vert _{L^{\frac{6}{5}}}\right)  .
\]
This concludes the proof of Corollary \ref{Corollary_point_fix}.
\end{proof}

\section{Proof of Theorem \ref{Main1} \label{TE}}

To obtain Theorem \ref{Main1} from the approximate system
\eqref{Approximare_eps_delta}, it remains to pass to the limit first with
respect to $\delta$ and secondly with respect to $\varepsilon$. As usually, in
order to use the nonlinear weak stability obtained in a previous section, one
important step will be to obtain estimates uniformly with respect to
$\varepsilon$.

\subsection{The approximate system in the limit $\delta\rightarrow
0$\label{delta_goes_to_0}}

Owing to the Corollary \ref{Point_fix} we see that for any $\varepsilon
,\delta\in\left(  0,1\right)  $ we may consider $\left(  \rho^{\varepsilon
,\delta},u^{\varepsilon,\delta}\right)  \in W^{2,\frac{3}{2}}\left(
\mathbb{T}^{3}\right)  \times(W^{2,\frac{3}{2}}\left(  \mathbb{T}^{3}\right)
)^{3}$ solution of $\left(  \text{\ref{Approximare_eps_delta}}\right)  $ which
verifies, uniformly in $\delta$ the estimates announced in $\left(
\text{\ref{estimari_eps_delta_prop}}\right)  $. By virtue of the
Rellich--Kondrachov theorem, these estimates are sufficient in order to pass
to the limit when $\delta$ tends to $0$ and obtain a solution of the limit
system verifying the first estimate in $\left(
\text{\ref{estimari_eps_delta_prop}}\right)  $. We skip the details. More
precisely, we obtain the following:

\begin{proposition}
\bigskip\label{existence_system_eps}Consider $\varepsilon\in\left(
0,1\right)  $. For all $M>0$ and $g\in(L^{\frac{3\left(  \gamma-1\right)
}{2\gamma-1}}\left(  \mathbb{T}^{3}\right)  )^{3}$ with $%
%TCIMACRO{\dint _{\mathbb{T}^{3}}}%
%BeginExpansion
{\displaystyle\int_{\mathbb{T}^{3}}}
%EndExpansion
g=0$, there exists $\left(  \rho^{\varepsilon},u^{\varepsilon}\right)  \in
W^{2,\frac{3}{2}}\left(  \mathbb{T}^{3}\right)  \times(W^{2,\frac{6}{5}%
}\left(  \mathbb{T}^{3}\right)  )^{3}$ verifying
\begin{equation}
\left\{
\begin{array}
[c]{l}%
-\varepsilon\Delta\rho^{\varepsilon}+\operatorname{div}\left(  \rho
^{\varepsilon}u^{\varepsilon}\right)  =0,\\
\operatorname{div}\left(  \rho u^{\varepsilon}\otimes u^{\varepsilon}\right)
-\mathcal{A}u^{\varepsilon}+\nabla(\rho^{\varepsilon})^{\gamma}+\varepsilon
(\nabla u^{\varepsilon}\nabla\rho^{\varepsilon}-%
%TCIMACRO{\dint _{\mathbb{T}^{3}}}%
%BeginExpansion
{\displaystyle\int_{\mathbb{T}^{3}}}
%EndExpansion
\nabla u^{\varepsilon}\nabla\rho^{\varepsilon})=g,\\
\rho^{\varepsilon}\geq0,\text{ }%
%TCIMACRO{\dint _{\mathbb{T}^{3}}}%
%BeginExpansion
{\displaystyle\int_{\mathbb{T}^{3}}}
%EndExpansion
\rho^{\varepsilon}=M,\text{ }%
%TCIMACRO{\dint _{\mathbb{T}^{3}}}%
%BeginExpansion
{\displaystyle\int_{\mathbb{T}^{3}}}
%EndExpansion
u^{\varepsilon}=0.
\end{array}
\right.  \label{system_eps_Prop}%
\end{equation}
along with the estimates
\begin{equation}
\left\{
\begin{array}
[c]{l}%
-%
%TCIMACRO{\dint _{\mathbb{T}^{3}}}%
%BeginExpansion
{\displaystyle\int_{\mathbb{T}^{3}}}
%EndExpansion
\left\langle \mathcal{A}u^{\varepsilon},u^{\varepsilon}\right\rangle
+\varepsilon\tfrac{4}{\gamma\left(  \gamma-1\right)  }%
%TCIMACRO{\dint _{\mathbb{T}^{3}}}%
%BeginExpansion
{\displaystyle\int_{\mathbb{T}^{3}}}
%EndExpansion
\left\vert \nabla(\rho^{\varepsilon})^{\frac{\gamma}{2}}\right\vert ^{2}\leq
C_{0}\left(  1+\left\Vert g\right\Vert _{L^{\frac{6}{5}}}^{2}\right)  ,\\
\left\Vert \rho^{\varepsilon}\right\Vert _{L^{3\left(  \gamma-1\right)  }%
}+\left\Vert \nabla u^{\varepsilon}\right\Vert _{L^{\frac{3\left(
\gamma-1\right)  }{\gamma}}}\leq C,\\
\varepsilon\left\Vert \nabla u^{\varepsilon}\nabla\rho^{\varepsilon
}\right\Vert _{L^{\frac{3(\gamma-1)}{2\gamma-1}}}\leq\varepsilon^{\frac
{\theta}{2}}C\text{ for some }\theta\in(0,1),
\end{array}
\right.  \label{energy_estimate}%
\end{equation}
Where $C_{0}$ and $C=C(\theta,\mu,\lambda,\gamma,\left\Vert g\right\Vert
_{L^{\frac{3(\gamma-1)}{2\gamma-1}}},\left\Vert \eta\right\Vert _{L^{\frac
{6\left(  \gamma-1\right)  }{4\gamma-3}}},\left\Vert \xi\right\Vert
_{L^{\frac{6\left(  \gamma-1\right)  }{4\gamma-3}}},M)$ are positive constants
independent of $\varepsilon$.
\end{proposition}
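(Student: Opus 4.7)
The plan is to pass to the limit $\delta\to 0$ in the sequence $(\rho^{\varepsilon,\delta},u^{\varepsilon,\delta})$ supplied by Corollary~\ref{Corollary_point_fix}. The first line in \eqref{estimari_eps_delta_prop} is already independent of $\delta$ and transfers directly to the limit. The genuine analytic task is therefore twofold: to upgrade that basic energy bound to a $\delta$-uniform $L^{3(\gamma-1)}$-bound on $\rho^{\varepsilon,\delta}$ (with a corresponding $W^{1,3(\gamma-1)/\gamma}$-bound on $u^{\varepsilon,\delta}$), and to show that the parasite term $\varepsilon\nabla u^{\varepsilon,\delta}\nabla\rho^{\varepsilon,\delta}$ decays like a positive power of $\varepsilon$ in a suitable Lebesgue space.

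To improve the integrability of the pressure I would implement the classical Bogovski\u{\i} argument, adapted to the anisotropic and nonlocal viscosities. Choose the exponent $\alpha = 2\gamma-3$ so that $\gamma+\alpha = 3(\gamma-1)$, and test the momentum equation in \eqref{Approximare_eps_delta} against
\[
\psi^{\varepsilon,\delta} = \mathcal{B}\Bigl[(\rho^{\varepsilon,\delta})^{\alpha} - \tfrac{1}{|\mathbb{T}^{3}|}\!\int_{\mathbb{T}^{3}}\!(\rho^{\varepsilon,\delta})^{\alpha}\Bigr],
\]
where $\mathcal{B}$ denotes the Bogovski\u{\i}-type solution operator of $\operatorname{div}\psi = f$ on the torus, which maps $L^{p}_{0}$ continuously into $(W^{1,p})^{3}$ for any $1<p<\infty$. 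The pressure pairing produces on the left-hand side, thanks to the mollified continuity equation, essentially $\int_{\mathbb{T}^{3}}(\rho^{\varepsilon,\delta})^{3(\gamma-1)}$, while the right-hand side gathers dualities of $\mathcal{A}u^{\varepsilon,\delta}$, $\operatorname{div}(\rho^{\varepsilon,\delta}\omega_\delta\ast u\otimes u)$, $\omega_\delta\ast g$, and the parasite pieces with $\nabla\psi^{\varepsilon,\delta}$. Each of these is estimated via Sobolev embedding, H\"older and the $L^{p}$-continuity of $\nabla\mathcal{B}$, and turns out to be polynomial in $\|\rho^{\varepsilon,\delta}\|_{L^{3(\gamma-1)}}$ with strictly sublinear exponent, so that they can be absorbed on the left—except for the contribution of the anisotropic piece $\mu\theta\partial_{33}u$ coupled with $(\mu+\lambda)\nabla\operatorname{div}u$, which generates a term linear in $\|\rho^{\varepsilon,\delta}\|_{L^{3(\gamma-1)}}^{3(\gamma-1)}$ whose coefficient has precisely the size $(1+|\theta|)|\theta|\mu(2\lambda+\mu)/(\lambda+\mu)^{2}$ appearing in \eqref{smallness}. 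The smallness hypothesis then permits absorption of this term as well, closing the estimate; the $W^{1,3(\gamma-1)/\gamma}$-bound on $u^{\varepsilon,\delta}$ follows from elliptic regularity for $\mathcal{A}$.

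For the decay of the capillary-like term, I would test the regularized continuity equation against $\rho^{\varepsilon,\delta}$ itself, which gives
\[
\varepsilon\int_{\mathbb{T}^{3}}|\nabla\rho^{\varepsilon,\delta}|^{2} = \tfrac{1}{2}\int_{\mathbb{T}^{3}}(\rho^{\varepsilon,\delta})^{2}\operatorname{div}(\omega_{\delta}\ast u^{\varepsilon,\delta}) - \delta\int_{\mathbb{T}^{3}}\rho^{\varepsilon,\delta}(\rho^{\varepsilon,\delta}-M),
\]
which is $\delta$-uniformly bounded because $\gamma>3$ implies $L^{4}\subset L^{3(\gamma-1)}$. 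Combining $\varepsilon^{1/2}\nabla\rho^{\varepsilon,\delta}\in L^{2}$ with the $L^{3(\gamma-1)/\gamma}$-bound on $\nabla u^{\varepsilon,\delta}$ via H\"older yields $\varepsilon\|\nabla u^{\varepsilon,\delta}\nabla\rho^{\varepsilon,\delta}\|_{L^{p}}\lesssim\varepsilon^{1/2}$ at a concrete exponent $p$, and then interpolation against a rough $L^{1}$-bound gives the announced $\varepsilon^{\theta/2}$ decay in $L^{3(\gamma-1)/(2\gamma-1)}$ for some $\theta\in(0,1)$.

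Once these uniform bounds are in hand, the passage $\delta\to 0$ is classical since $\varepsilon>0$ stays frozen: the elliptic regularization $-\varepsilon\Delta\rho^{\varepsilon,\delta}$ provides, uniformly in $\delta$, enough Sobolev regularity on $\rho^{\varepsilon,\delta}$ to extract strong compactness in every $L^{r}$ with $r<\infty$ via Rellich--Kondrachov, while $u^{\varepsilon,\delta}$ converges strongly in $L^{r}$ for $r<3(\gamma-1)$; the mollifier convergences $\omega_\delta\ast w\to w$ then identify every nonlinear product in the limit, in particular $(\rho^{\varepsilon,\delta})^{\gamma}$ and $\rho^{\varepsilon,\delta}u^{\varepsilon,\delta}\otimes u^{\varepsilon,\delta}$. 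The main obstacle in this plan is undoubtedly the Bogovski\u{\i} step: the non-standard pieces of $\mathcal{A}$ (the anisotropic $\mu\theta\partial_{33}$ and the convolutions with kernels $\eta,\xi$) must be tracked carefully when evaluated on $\nabla\psi^{\varepsilon,\delta}$, and one must check that the resulting coefficients combine to match exactly the smallness expression \eqref{smallness}, the analysis relying on Plancherel-type identities for the convolution kernels together with the $L^{p}$-continuity of $\nabla\mathcal{B}$.
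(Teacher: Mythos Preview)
Your overall architecture is sound, but two points deserve correction when compared with the paper's argument. First, the paper reverses your order of operations: it passes to the limit $\delta\to 0$ \emph{before} deriving the improved $L^{3(\gamma-1)}$ estimate, using only the $\varepsilon$-dependent (but $\delta$-uniform) $W^{2,2}\times W^{2,3/2}$ bounds from Corollary~\ref{Corollary_point_fix} together with Rellich--Kondrachov. The improved density estimate is then obtained directly on the cleaner $\varepsilon$-system \eqref{system_eps_Prop}, where the pressure is $\nabla(\rho^\varepsilon)^\gamma$ without mollification. Your choice to carry out the Bogovski\u{\i} test at the $(\varepsilon,\delta)$ level runs into the mollified pressure $\nabla(\omega_\delta\ast\rho^\gamma)$: the pairing with $\psi=\mathcal{B}[(\rho^{\varepsilon,\delta})^\alpha-\mathrm{avg}]$ produces $\int(\omega_\delta\ast\rho^\gamma)\rho^\alpha=\int\rho^\gamma(\omega_\delta\ast\rho^\alpha)$, which has no $\delta$-uniform lower bound in terms of $\int\rho^{\gamma+\alpha}$. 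This is why the paper postpones the estimate. Also, rather than a Bogovski\u{\i} test, the paper applies $\operatorname{div}$ to the momentum equation and inverts the full anisotropic operator $\mu\Delta_\theta+(\mu+\lambda)\Delta$; this isolates the multiplier $Id-(2\mu+\lambda)(\mu\Delta_\theta+(\mu+\lambda)\Delta)^{-1}\Delta$ and yields the smallness constant $(1+|\theta|)|\theta|\mu(2\lambda+\mu)/(\lambda+\mu)^2$ directly via Mihlin's theorem (see the Appendix). Your Bogovski\u{\i} route produces instead a cross term $\mu\theta\int\operatorname{div}u\,\partial_{33}\Delta^{-1}(\rho^\alpha)$, and closing it requires reexpressing $\operatorname{div}u$ through the momentum equation---which leads back to the same multiplier analysis.

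Second, your argument for the third estimate in \eqref{energy_estimate} has a genuine gap. From $\varepsilon^{1/2}\nabla\rho\in L^2$ and $\nabla u\in L^{3(\gamma-1)/\gamma}$, H\"older gives $\varepsilon\|\nabla u\,\nabla\rho\|_{L^p}\lesssim\varepsilon^{1/2}$ only for $p=6(\gamma-1)/(5\gamma-3)$, which is strictly \emph{smaller} than the target $3(\gamma-1)/(2\gamma-1)$; interpolating against an $L^1$ bound cannot raise the exponent. The paper supplies the missing ingredient: from the mass equation one writes $\varepsilon\nabla\rho=\Delta^{-1}\nabla\operatorname{div}(\rho u)$, and since $\rho\in L^{3(\gamma-1)}$, $u\in L^{3(\gamma-1)}$ (Sobolev from $\nabla u\in L^{3(\gamma-1)/\gamma}$) one gets $\varepsilon\|\nabla\rho\|_{L^{3(\gamma-1)/2}}\leq C$. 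Interpolating this with $\varepsilon^{1/2}\|\nabla\rho\|_{L^2}\leq C$ yields $\varepsilon\|\nabla\rho\|_{L^3}\leq C\varepsilon^{\theta/2}$ for some $\theta\in(0,1)$; then H\"older with $\nabla u\in L^{3(\gamma-1)/\gamma}$ hits exactly $L^{3(\gamma-1)/(2\gamma-1)}$ since $\tfrac{\gamma}{3(\gamma-1)}+\tfrac13=\tfrac{2\gamma-1}{3(\gamma-1)}$.
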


We fill focus instead on proving the second and third estimates announced in
$\left(  \text{\ref{energy_estimate}}\right)  $ which say that it is possible
to recover estimates for the density that are independent of $\varepsilon$
along with better integrability properties for the velocity $u$. This is the
objective of the next section.

\subsubsection{Estimates for the density and improved estimates for the
velocity\label{Estimates_density_velocity}}

We will drop the $\varepsilon$ superscript in order to ease the reading of the
computation that follow. Thus, consider a pair $\left(  \rho,u\right)  \in
W^{2,\frac{3}{2}}\left(  \mathbb{T}^{3}\right)  \times(W^{2,\frac{6}{5}%
}\left(  \mathbb{T}^{3}\right)  )^{3}$ solution of $\left(
\text{\ref{system_eps_Prop}}\right)  $ verifying the first estimate from
$\left(  \text{\ref{energy_estimate}}\right)  $. Apply the divergence
$\operatorname{div}$ operator in the momentum equation such as to obtain%
\begin{equation}
-\left\{  \left(  \mu\Delta_{\theta}+\left(  \mu+\lambda\right)
\Delta\right)  \operatorname{div}u+\Delta\left(  (\eta+\xi)\ast
\operatorname{div}u\right)  \right\}  +\Delta\rho^{\gamma}=\operatorname{div}%
g-\operatorname{div}\operatorname{div}\left(  \rho u\otimes u\right)
-\varepsilon\operatorname{div}\left(  \nabla u\nabla\rho\right)
.\label{div_in_momentum}%
\end{equation}
from which we obtain that%
\begin{align*}
\rho^{\gamma} &  =\int_{\mathbb{T}^{3}}\rho^{\gamma}+\left(  2\mu
+\lambda\right)  \operatorname{div}u+\left(  Id-\left(  2\mu+\lambda\right)
\left(  \mu\Delta_{\theta}+\left(  \mu+\lambda\right)  \Delta\right)
^{-1}\Delta\right)  \left(  \rho^{\gamma}-\int_{\mathbb{T}^{3}}\rho^{\gamma
}\right)  \\
&  +\left(  2\mu+\lambda\right)  \left(  \mu\Delta_{\theta}+\left(
\mu+\lambda\right)  \Delta\right)  ^{-1}\left\{  \Delta(\eta+\xi
)\ast\operatorname{div}u+\operatorname{div}g-\operatorname{div}%
\operatorname{div}\left(  \rho u\otimes u\right)  -\varepsilon
\operatorname{div}\left(  \nabla u\nabla\rho\right)  \right\}  \\
&  \overset{not.}{=}\sum_{i=1}^{7}T_{i}.
\end{align*}
In the following we will search for an $\alpha>0$ such that all $i\in
\overline{1,7}$
\[
\int\rho^{\alpha}T_{i}\leq C\left(  \left\Vert g\right\Vert ,M\right)
+\beta\int_{\mathbb{T}^{3}}\rho^{\alpha+\gamma},
\]
with a sufficiently small $\beta$ from which we will obtain an estimate of the
form
\[
\int_{\mathbb{T}^{3}}\rho^{\alpha+\gamma}\leq C\left(  \left\Vert g\right\Vert
,M\right)  .
\]

\smallskip

\noindent\textit{First term $T_{1}$} We simply write that%
\begin{align}
\int_{\mathbb{T}^{3}}\rho^{\alpha}\int_{\mathbb{T}^{3}}\rho^{\gamma} &
\leq\frac{\alpha}{\alpha+\gamma}\left(  \int_{\mathbb{T}^{3}}\rho^{\alpha
}\right)  ^{\frac{\alpha+\gamma}{\alpha}}+\frac{\gamma}{\alpha+\gamma}\left(
\int_{\mathbb{T}^{3}}\rho^{\gamma}\right)  ^{\frac{\alpha+\gamma}{\gamma}%
}\nonumber\\
&  \leq\frac{\alpha}{\alpha+\gamma}\left(  \int_{\mathbb{T}^{3}}\rho\right)
^{(1-\theta_{1})\left(  \alpha+\gamma\right)  }\left(  \int_{\mathbb{T}^{3}%
}\rho^{\alpha+\gamma}\right)  ^{\theta_{1}\left(  \alpha+\gamma\right)
}\nonumber\\
&  +\frac{\gamma}{\alpha+\gamma}\left(  \int_{\mathbb{T}^{3}}\rho\right)
^{(1-\theta_{2})\left(  \alpha+\gamma\right)  }\left(  \int_{\mathbb{T}^{3}%
}\rho^{\alpha+\gamma}\right)  ^{\theta_{2}\left(  \alpha+\gamma\right)
}\nonumber\\
&  \leq C\left(  \alpha,\gamma,M,\beta\right)  +\beta\int_{\mathbb{T}^{3}}%
\rho^{\alpha+\gamma},\label{apriori_first_term}%
\end{align}
for any $\beta>0$ and some $\theta_{1},\theta_{2}\in(0,1)$.

\smallskip\noindent\textit{Second Term $T_{2}$.} Using the equation of $\rho$
we see that
\[
-\varepsilon\Delta\rho^{\alpha}+\varepsilon\frac{4}{\alpha}\left\vert
\nabla\rho^{\frac{\alpha}{2}}\right\vert ^{2}+\operatorname{div}(\rho^{\alpha
}u)+\left(  \alpha-1\right)  \rho^{\alpha}\operatorname{div}u=0,
\]
and consequently $T_{2}$ is a negative term:%
\begin{equation}
\int_{\mathbb{T}^{3}}\rho^{\alpha}T_{2}=\int_{\mathbb{T}^{3}}\rho^{\alpha
}\operatorname{div}u=-\varepsilon\dfrac{4}{\alpha\left(  \alpha-1\right)
}\int_{\mathbb{T}^{3}}\left\vert \nabla\rho^{\frac{\alpha}{2}}\right\vert
^{2}. \label{apriori_second_term}%
\end{equation}

\smallskip

\noindent\textit{Third term $T_{3}$.} The third term is more delicate to treat
because it is of the same order as $\rho^{\alpha+\gamma}$ such that we need
the smallness assumption $\left(  \text{\ref{smallness}}\right)  $. Again
using the mass equation we have that%
\begin{align}
&  \int_{\mathbb{T}^{3}}\rho^{\alpha}T_{3}\nonumber\\
&  \leq\left\Vert \rho^{\alpha}\right\Vert _{L^{\frac{\alpha+\gamma}{\alpha}}%
}\left\Vert \left(  Id-\left(  2\mu+\lambda\right)  \left(  \mu\Delta_{\theta
}+\left(  \mu+\lambda\right)  \Delta\right)  ^{-1}\Delta\right)  \left(
\rho^{\gamma}-\int_{\mathbb{T}^{3}}\rho^{\gamma}\right)  \right\Vert
_{L^{\frac{\alpha+\gamma}{\gamma}}}\nonumber\\
&  \leq C(1+|\theta|)\mu\left\vert \theta\right\vert \frac{2\lambda+\mu
}{\left(  \mu+\lambda\right)  ^{2}}\left\Vert \rho^{\alpha}\right\Vert
_{L\frac{^{\alpha+\gamma}}{\alpha}}\left\Vert \rho^{\gamma}\right\Vert
_{L^{\frac{\alpha+\gamma}{\gamma}}}=C(1+|\theta|)\mu\left\vert \theta
\right\vert \frac{2\lambda+\mu}{\left(  \mu+\lambda\right)  ^{2}}%
\int_{\mathbb{T}^{3}}\rho^{\alpha+\gamma},\label{apriori_third_term_1}%
\end{align}
where we have used that the norm of the operator
\[
Id-\left(  2\mu+\lambda\right)  \left(  \mu\Delta_{\theta}+\left(  \mu
+\lambda\right)  \Delta\right)  ^{-1}\Delta
\]
is controlled by
\[
C(1+|\theta|)\mu\left\vert \theta\right\vert \dfrac{2\lambda+\mu}{(\mu
+\lambda)^{2}},
\]
see Appendix after Theorem \ref{multipliers}. Consequently if this quantity is
sufficiently small, we will be able to close the estimates absorbing this term
by the left-hand side. This is satisfied for instance if $|\theta|$ is small
enough or the bulk viscosity large enough.

\smallskip\noindent\textit{Fourth term $T_{4}$.} The fourth term is treated as
follows%
\begin{align}
T_{4}  &  \leq C\left(  \theta,\mu,\lambda\right)  \left\Vert \rho\right\Vert
_{L^{\alpha+\gamma}}^{\alpha}\left\Vert \eta+\xi\right\Vert _{L^{\frac
{2\left(  \alpha+\gamma\right)  }{2\gamma+\alpha}}}\left\Vert
\operatorname{div}u\right\Vert _{L^{2}}\nonumber\\
&  \leq C\left(  \theta,\mu,\lambda\right)  \left\Vert \rho\right\Vert
_{L^{\alpha+\gamma}}^{\alpha}\left\Vert \eta+\xi\right\Vert _{L^{\frac
{2\left(  \alpha+\gamma\right)  }{2\gamma+\alpha}}}\left\Vert g\right\Vert
_{L^{\frac{6}{5}}}. \label{apriori_forth_term}%
\end{align}

\smallskip\noindent\textit{Fifth term $T_{5}$.} The fifth term is treated as
follows%
\begin{equation}
T_{5}\leq C\left(  \theta,\mu,\lambda\right)  \left\Vert \rho\right\Vert
_{L^{\alpha+\gamma}}^{\alpha}\left\Vert g\right\Vert _{L^{\frac{3\left(
\alpha+\gamma\right)  }{4\gamma+\alpha}}}, \label{apriori_fifth}%
\end{equation}
provided that%
\[
\frac{3\left(  \alpha+\gamma\right)  }{4\gamma+\alpha}>1\text{ which yields
}2\alpha>\gamma.
\]

\smallskip\noindent\textit{Sixth term $T_{6}$.} The sixth term is treated as
follows
\begin{align}
T_{6}  &  \leq C\left(  \theta,\mu,\lambda\right)  \left\Vert \rho\right\Vert
_{L^{\alpha+\gamma}}^{\alpha}\left\Vert \rho u\otimes u\right\Vert
_{L^{\frac{\alpha+\gamma}{\gamma}}}\leq C\left(  \theta,\mu,\lambda\right)
\left\Vert \rho\right\Vert _{L^{\alpha+\gamma}}^{\alpha}\left\Vert
u\right\Vert _{L^{6}}^{2}\left\Vert \rho\right\Vert _{L^{\frac{3\left(
\alpha+\gamma\right)  }{2\gamma-\alpha}}}\nonumber\\
&  \leq C\left(  \theta,\mu,\lambda\right)  \left\Vert g\right\Vert
_{L^{\frac{6}{5}}}^{2}\left\Vert \rho\right\Vert _{L^{\alpha+\gamma}%
}^{1+\alpha}. \label{apriori_sixth_term}%
\end{align}
Of course in order to pass to the second line of $\left(
\text{\ref{apriori_sixth_term}}\right)  $ we need to have%
\[
\frac{3\left(  \alpha+\gamma\right)  }{2\gamma-\alpha}\leq\alpha+\gamma\text{
which yields }\alpha\leq2\gamma-3\text{.}%
\]
This is the point where we see that a rather large adiabatic coefficient
$\gamma$ is needed in order to recover that the pressure is a bit better than
$L^{2}$.

\smallskip\noindent\textit{Seventh term $T_{7}$.} The seventh term is treated
as follows. First we write that%
\[
-\varepsilon\operatorname{div}\left(  \nabla u\nabla\rho\right)
=\operatorname{div}\left(  \nabla u\Delta^{-1}\nabla\operatorname{div}\left(
\rho u\right)  \right)  .
\]
Next, using the Sobolev inequality we get that%
\begin{align}
T_{7}  &  \leq\left\Vert \rho\right\Vert _{L^{\alpha+\gamma}}^{\alpha
}\left\Vert \nabla u\Delta^{-1}\nabla\operatorname{div}\left(  \rho u\right)
\right\Vert _{L^{\frac{3\left(  \alpha+\gamma\right)  }{4\gamma+\alpha}}%
}\nonumber\\
&  \leq\left\Vert \rho\right\Vert _{L^{\alpha+\gamma}}^{\alpha}\left\Vert
\nabla u\right\Vert _{L^{2}}\left\Vert \Delta^{-1}\nabla\operatorname{div}%
\left(  \rho u\right)  \right\Vert _{L^{\frac{6\left(  \alpha+\gamma\right)
}{5\gamma-\alpha}}}\leq\left\Vert \rho\right\Vert _{L^{\alpha+\gamma}}%
^{\alpha}\left\Vert \nabla u\right\Vert _{L^{2}}\left\Vert \rho u\right\Vert
_{L^{\frac{6\left(  \alpha+\gamma\right)  }{5\gamma-\alpha}}}\nonumber\\
&  \leq\left\Vert \rho\right\Vert _{L^{\alpha+\gamma}}^{\alpha}\left\Vert
\nabla u\right\Vert _{L^{2}}\left\Vert u\right\Vert _{L^{6}}\left\Vert
\rho\right\Vert _{L^{\frac{3\left(  \alpha+\gamma\right)  }{2\gamma-\alpha}}%
}\nonumber\\
&  \leq\left\Vert \rho\right\Vert _{L^{\alpha+\gamma}}^{1+\alpha}\left\Vert
\nabla u\right\Vert _{L^{2}}^{2}. \label{apriori_seventh_term}%
\end{align}

\smallskip\noindent\textit{Conclusion.} Finally, choosing $\alpha=2\gamma-3$
and putting together all the above estimates concerning $T_{i}$ for
$i=1,\cdots,7$ we get that
\begin{equation}
\left\Vert \rho\right\Vert _{L^{3\left(  \gamma-1\right)  }}\leq C\left(
\theta,\mu,\lambda,\gamma,\left\Vert g\right\Vert _{L^{\frac{3(\gamma
-1)}{2\gamma-1}}},\left\Vert \eta\right\Vert _{L^{\frac{6\left(
\gamma-1\right)  }{4\gamma-3}}},\left\Vert \xi\right\Vert _{L^{\frac{6\left(
\gamma-1\right)  }{4\gamma-3}}},M\right)  .\label{bound_on_density}%
\end{equation}
Of course, going back to the identity $\left(  \text{\ref{div_in_momentum}%
}\right)  $ and using the uniform bound on $\rho^{\gamma}$ in $L^{\frac
{3\left(  \gamma-1\right)  }{\gamma}}$ and proceeding as we did in estimate
$\left(  \text{\ref{apriori_seventh_term}}\right)  $ we can recover that%
\begin{equation}
\left\Vert \operatorname{div}u\right\Vert _{L^{\frac{3(\gamma-1)}{\gamma}}%
}+\left\Vert \rho\right\Vert _{L^{3\left(  \gamma-1\right)  }}\leq C\left(
\theta,\mu,\lambda,\gamma,\left\Vert g\right\Vert _{L^{\frac{3(\gamma
-1)}{2\gamma-1}}},\left\Vert \eta\right\Vert _{L^{\frac{6\left(
\gamma-1\right)  }{4\gamma-3}}},\left\Vert \xi\right\Vert _{L^{\frac{6\left(
\gamma-1\right)  }{4\gamma-3}}},M\right)  .\label{bound_on_density_and_div}%
\end{equation}
The last estimate can be used to get \textit{extra-integrability for the
velocity field} with respect to the basic energy estimate. This is achieved by
observing that%
\begin{align*}
\mu\nabla u &  =\Delta_{\theta}^{-1}\nabla\operatorname{div}\left(  \rho
u\otimes u\right)  +\Delta_{\theta}^{-1}\nabla^{2}\rho^{\gamma}--\left(
\mu+\lambda\right)  \Delta_{\theta}^{-1}\nabla^{2}\operatorname{div}%
u-\Delta_{\theta}^{-1}\nabla g\\
&  -\Delta_{\theta}^{-1}\Delta\left(  \eta\ast\nabla u\right)  -\Delta
_{\theta}^{-1}\nabla^{2}\left(  \xi\ast\operatorname{div}u\right)
+\varepsilon\Delta_{\theta}^{-1}\nabla(\nabla u\nabla\rho)
\end{align*}
such that we obtain
\begin{equation}
\left\Vert \nabla u\right\Vert _{L^{\frac{3(\gamma-1)}{\gamma}}}\leq C\left(
\theta,\mu,\lambda,\gamma,\left\Vert g\right\Vert _{L^{\frac{3(\gamma
-1)}{2\gamma-1}}},\left\Vert \eta\right\Vert _{L^{\frac{6\left(
\gamma-1\right)  }{4\gamma-3}}},\left\Vert \xi\right\Vert _{L^{\frac{6\left(
\gamma-1\right)  }{4\gamma-3}}},M\right)  .\label{improved_bound_on_velocity}%
\end{equation}

\smallskip\noindent\textit{Estimates for the gradient of the density.}
Finally, we aim at recovering some improved estimates for the gradient of
$\rho$. In order to do that, we write in a first time that%
\begin{equation}
\varepsilon\left\Vert \nabla\rho\right\Vert _{L^{2}}^{2}=\int\rho
^{2}\operatorname{div}u\leq\left\Vert \rho\right\Vert _{L^{4}}^{2}\left\Vert
\operatorname{div}u\right\Vert _{L^{2}}\leq C\left(  \theta,\mu,\lambda
,\gamma,\left\Vert g\right\Vert _{L^{\frac{3(\gamma-1)}{2\gamma-1}}%
},\left\Vert \eta\right\Vert _{L^{\frac{6\left(  \gamma-1\right)  }{4\gamma
-3}}},\left\Vert \xi\right\Vert _{L^{\frac{6\left(  \gamma-1\right)  }%
{4\gamma-3}}},M\right)  .\label{gradient_rho1}%
\end{equation}
Also, using%
\[
\varepsilon\nabla\rho=\Delta^{-1}\nabla\operatorname{div}\left(  \rho
u\right)  ,
\]
we get
\begin{equation}
\varepsilon\left\Vert \nabla\rho\right\Vert _{L^{\frac{3\left(  \gamma
-1\right)  }{2}}}\leq\left\Vert \rho\right\Vert _{L^{3\left(  \gamma-1\right)
}}\left\Vert u\right\Vert _{L^{3\left(  \gamma-1\right)  }}\leq C\left(
\theta,\mu,\lambda,\gamma,\left\Vert g\right\Vert _{L^{\frac{3(\gamma
-1)}{2\gamma-1}}},\left\Vert \eta\right\Vert _{L^{\frac{6\left(
\gamma-1\right)  }{4\gamma-3}}},\left\Vert \xi\right\Vert _{L^{\frac{6\left(
\gamma-1\right)  }{4\gamma-3}}},M\right)  .\label{gradient_rho2}%
\end{equation}
Using $\left(  \text{\ref{gradient_rho1}}\right)  $ and $\left(
\text{\ref{gradient_rho2}}\right)  $ we obtain
\[
\varepsilon\left\Vert \nabla\rho\right\Vert _{L^{3}}\leq\varepsilon\left\Vert
\nabla\rho\right\Vert _{L^{2}}^{\theta}\left\Vert \rho\right\Vert
_{L^{\frac{3\left(  \gamma-1\right)  }{2}}}^{1-\theta}\leq\varepsilon
^{\frac{\theta}{2}}C\left(  \theta,\mu,\lambda,\gamma,\left\Vert g\right\Vert
_{L^{\frac{3(\gamma-1)}{2\gamma-1}}},\left\Vert \eta\right\Vert _{L^{\frac
{6\left(  \gamma-1\right)  }{4\gamma-3}}},\left\Vert \xi\right\Vert
_{L^{\frac{6\left(  \gamma-1\right)  }{4\gamma-3}}},M\right)  ,
\]
where $\theta\in(0,1)$ is given by%
\[
\frac{1}{3}=\frac{\theta}{2}+\frac{2\left(  1-\theta\right)  }{3\left(
\gamma-1\right)  }.
\]
Moreover,
\begin{align}
\varepsilon\left\Vert \nabla u\nabla\rho\right\Vert _{L^{\frac{3(\gamma
-1)}{2\gamma-1}}} &  \leq\varepsilon\left\Vert \nabla u\right\Vert
_{L^{\frac{3\left(  \gamma-1\right)  }{\gamma}}}\left\Vert \nabla
\rho\right\Vert _{L^{3}}\nonumber\\
&  \leq\varepsilon^{\frac{\theta}{2}}C\left(  \theta,\mu,\lambda
,\gamma,\left\Vert g\right\Vert _{L^{\frac{3(\gamma-1)}{2\gamma-1}}%
},\left\Vert \eta\right\Vert _{L^{\frac{6\left(  \gamma-1\right)  }{4\gamma
-3}}},\left\Vert \xi\right\Vert _{L^{\frac{6\left(  \gamma-1\right)  }%
{4\gamma-3}}},M\right)  \label{the_extra_term_momentum_goes_to_0}%
\end{align}

\subsection{The limit passage $\varepsilon\to0$.}

The Proof of Theorem \ref{Main1} is based on the existence of solutions for
the regularized system $\left(  \text{\ref{system_eps_Prop}}\right)  $ and on
an adoption of the proof of the stability result Theorem \ref{Main1}. Owing to
Proposition \ref{existence_system_eps}, let us consider a sequence $\left(
\rho^{\varepsilon},u^{\varepsilon}\right)  _{\varepsilon>0}\subset
W^{2,\frac{3}{2}}\left(  \mathbb{T}^{3}\right)  \times(W^{2,\frac{6}{5}%
}\left(  \mathbb{T}^{3}\right)  )^{3}$ verifying $\left(
\text{\ref{system_eps_Prop}}\right)  $ and uniformly in $\varepsilon$ the
estimate $\left(  \text{\ref{energy_estimate}}\right)  $. Using the theory of
Sobolev spaces and the Rellich-Kondrachov theorem, we get the existence of
functions $\left(  \rho,u,\overline{\rho^{\gamma}},\overline{\mathcal{C(}%
u,u)}\right)  $ verifying%
\begin{equation}
\left\{
\begin{array}
[c]{l}%
\rho^{\varepsilon}\rightharpoonup\rho\text{ weakly\ in }L^{3(\gamma-1)}\left(
\mathbb{T}^{3}\right)  ,\\
(\rho^{\varepsilon})^{\gamma}\rightharpoonup\overline{\rho^{\gamma}%
}\text{\ weakly in }L^{\frac{3\left(  \gamma-1\right)  }{\gamma}}\left(
\mathbb{T}^{3}\right)  ,\\
\nabla u^{\varepsilon}\rightharpoonup\nabla u\text{ weakly in }L^{\frac
{3\left(  \gamma-1\right)  }{\gamma}}\left(  \mathbb{T}^{3}\right)  ,\\
\mathcal{C(}u^{\varepsilon},u^{\varepsilon})\rightharpoonup\overline
{\mathcal{C(}u,u)}\text{ weakly in }L^{\frac{3\left(  \gamma-1\right)
}{2\gamma}}\left(  \mathbb{T}^{3}\right)  ,\\
u^{\varepsilon}\rightarrow u\text{ strongly in }L^{q}\left(  \mathbb{T}%
^{3}\right)  \text{ for any }1\leq q<3\left(  \gamma-1\right)  .
\end{array}
\right.  \label{convergence_properties_2}%
\end{equation}
We recall that $\mathcal{C}$ is defined in relation $\left(
\text{\ref{definition_B_et_C}}\right)  $. We deduce that%
\begin{equation}
\left\{
\begin{array}
[c]{l}%
\operatorname{div}\left(  \rho u\right)  =0,\\
\operatorname{div}\left(  \rho u\otimes u\right)  -\mathcal{A}u+\nabla
\overline{\rho^{\gamma}}=g,\\%
%TCIMACRO{\dint _{\mathbb{T}^{3}}}%
%BeginExpansion
{\displaystyle\int_{\mathbb{T}^{3}}}
%EndExpansion
\rho\left(  x\right)  dx=M,\text{ }%
%TCIMACRO{\dint _{\mathbb{T}^{3}}}%
%BeginExpansion
{\displaystyle\int_{\mathbb{T}^{3}}}
%EndExpansion
u\left(  x\right)  dx=0,\text{ }\rho\geq0.
\end{array}
\right.  \label{limit_before_identification_2}%
\end{equation}
In order to identify $\overline{\rho^{\gamma}}$ with $\rho^{\gamma}$ we may
proceed exactly as we did in Section \ref{NWS} the only difference being that
we have%
\[
\frac{1}{\gamma-1}\operatorname{div}\left(  u\left(  \overline{\rho^{\gamma}%
}-\rho^{\gamma}\right)  \right)  +\left(  \overline{\rho^{\gamma}}%
-\rho^{\gamma}\right)  \operatorname{div}u+\overline{\mathcal{C}\left(
u,u\right)  }-C\left(  u,u\right)  \leq0,
\]
instead of $\left(  \text{\ref{Identity}}\right)  $. Indeed, the negative sign
comes from the fact that when we write the energy equations
\begin{align*}
&  -\frac{\varepsilon}{\gamma-1}\Delta(\rho^{\varepsilon})^{\gamma}%
+\frac{4\varepsilon}{\gamma\left(  \gamma-1\right)  }\left\vert \nabla
(\rho^{\varepsilon})^{\frac{\gamma}{2}}\right\vert ^{2}+\frac{\gamma}%
{\gamma-1}\operatorname{div}\left(  (\rho^{\varepsilon})^{\gamma
}u^{\varepsilon}\right)  \\
&  =u^{\varepsilon}\nabla(\rho^{\varepsilon})^{\gamma}\\
&  =-u^{\varepsilon}\left\{  \operatorname{div}\left(  \rho u^{\varepsilon
}\otimes u^{\varepsilon}\right)  +\varepsilon\nabla u^{\varepsilon}\nabla
\rho^{\varepsilon}\right\}  +\left\langle \mathcal{A}u^{\varepsilon
},u^{\varepsilon}\right\rangle +gu^{\varepsilon}+\varepsilon u^{\varepsilon
}\int_{\mathbb{T}^{3}}\nabla u^{\varepsilon}\nabla\rho^{\varepsilon}\\
&  =-\frac{1}{2}\operatorname{div}\left(  u^{\varepsilon}\rho^{\varepsilon
}\left\vert u^{\varepsilon}\right\vert ^{2}\right)  -\frac{\varepsilon}%
{2}\operatorname{div}\left(  u^{\varepsilon}\otimes\nabla\rho^{\varepsilon
}\right)  +\mathcal{B}\left(  u^{\varepsilon},u^{\varepsilon}\right)
-\mathcal{C}\left(  u^{\varepsilon},u^{\varepsilon}\right)  +gu^{\varepsilon
}+\varepsilon u^{\varepsilon}\int_{\mathbb{T}^{3}}\nabla u^{\varepsilon}%
\nabla\rho^{\varepsilon}.
\end{align*}
Thus, using $\left(  \text{\ref{convergence_property}}\right)  $ and $\left(
\text{\ref{the_extra_term_momentum_goes_to_0}}\right)  $ we get that%
\begin{equation}
\frac{\gamma}{\gamma-1}\operatorname{div}\left(  \overline{\rho^{\gamma}%
}u\right)  =-\frac{1}{2}\operatorname{div}\left(  u\rho\left\vert u\right\vert
^{2}\right)  +\mathcal{B}\left(  u,u\right)  -\overline{\mathcal{C}\left(
u,u\right)  }+gu-\Xi,\label{diference1}%
\end{equation}
where $\Xi$ is the limiting positive measure
\begin{equation}
\Xi=\lim_{\varepsilon\rightarrow0}\frac{4\varepsilon}{\gamma\left(
\gamma-1\right)  }\left\vert \nabla(\rho^{\varepsilon})^{\frac{\gamma}{2}%
}\right\vert ^{2}.\label{masura}%
\end{equation}
But we also have that%
\begin{align}
\frac{\gamma}{\gamma-1}\operatorname{div}\left(  \rho^{\gamma}u\right)   &
=\operatorname{div}\left(  u\left(  \overline{\rho^{\gamma}}-\rho^{\gamma
}\right)  \right)  -\left(  \overline{\rho^{\gamma}}-\rho^{\gamma}\right)
\operatorname{div}u\nonumber\\
&  -\frac{1}{2}\operatorname{div}\left(  u\rho\left\vert u\right\vert
^{2}\right)  +\mathcal{B}\left(  u,u\right)  -\mathcal{C}\left(  u,u\right)
+gu,\label{diference2}%
\end{align}
such that when taking the difference of $\left(  \text{\ref{diference1}%
}\right)  $ with $\left(  \text{\ref{diference2}}\right)  $ we end up with%
\[
\operatorname{div}\left(  u\left(  \overline{\rho^{\gamma}}-\rho^{\gamma
}\right)  \right)  +\left(  \gamma-1\right)  \left(  \overline{\rho^{\gamma}%
}-\rho^{\gamma}\right)  \operatorname{div}u+\overline{\mathcal{C}\left(
u,u\right)  }-\mathcal{C}\left(  u,u\right)  =-\Xi,
\]
with $\Xi$ the measure defined by $\left(  \text{\ref{masura}}\right)  $. The
proof of the fact that $\nabla u^{\varepsilon}\rightarrow\nabla u$ strongly in
$L^{r}\left(  \mathbb{T}^{3}\right)  $ for all $r\in\lbrack1,\frac{3\left(
\gamma-1\right)  }{\gamma})$ remains essentially the same as in Proposition
\ref{Convergenta_gradient}. Observe that%
\begin{align*}
\operatorname{div}\left(  \rho^{\varepsilon}u^{\varepsilon}\otimes
u^{\varepsilon}\right)  +\varepsilon\nabla u^{\varepsilon}\nabla
\rho^{\varepsilon} &  =\operatorname{div}\left(  \rho^{\varepsilon
}u^{\varepsilon}\right)  u^{\varepsilon}+\rho^{\varepsilon}u^{\varepsilon
}\cdot\nabla u^{\varepsilon}+\varepsilon\nabla u^{\varepsilon}\nabla
\rho^{\varepsilon}\\
&  =\varepsilon\Delta\rho^{\varepsilon}u^{\varepsilon}+\varepsilon\nabla
u^{\varepsilon}\nabla\rho^{\varepsilon}+\rho^{\varepsilon}u^{\varepsilon}%
\cdot\nabla u^{\varepsilon}\\
&  =\varepsilon\operatorname{div}\left(  u^{\varepsilon}\otimes\nabla
\rho^{\varepsilon}\right)  +\rho^{\varepsilon}u^{\varepsilon}\cdot\nabla
u^{\varepsilon}.
\end{align*}
Applying $\operatorname{div}_{\theta}$ in the velocity's equation we obtain
that%
\begin{align*}
\Delta_{\theta}(\mu\operatorname{div}_{\theta}u^{\varepsilon}+\left(
\mu+\lambda\right)  \operatorname{div}u^{\varepsilon}+\xi\ast
\operatorname{div}u^{\varepsilon}-(\rho^{\varepsilon})^{\gamma}) &
=-\operatorname{div}\left(  \nabla\eta\ast\operatorname{div}_{\theta
}u^{\varepsilon}\right)  -\varepsilon\operatorname{div}_{\theta}%
\operatorname{div}\left(  u^{\varepsilon}\otimes\nabla\rho^{\varepsilon
}\right)  \\
&  -\operatorname{div}_{\theta}\left(  \rho^{\varepsilon}u^{\varepsilon}%
\cdot\nabla u^{\varepsilon}\right)  -\operatorname{div}_{\theta}g.
\end{align*}
thus, by denoting%
\[
w^{\varepsilon}\overset{not.}{=}\mu\operatorname{div}_{\theta}u^{\varepsilon
}+\left(  \mu+\lambda\right)  \operatorname{div}u^{\varepsilon}+\xi
\ast\operatorname{div}u^{\varepsilon}-(\rho^{\varepsilon})^{\gamma
}+\varepsilon\Delta_{\theta}^{-1}\operatorname{div}_{\theta}\operatorname{div}%
\left(  u^{\varepsilon}\otimes\nabla\rho^{\varepsilon}-\int_{\mathbb{T}^{3}%
}u^{\varepsilon}\otimes\nabla\rho^{\varepsilon}\right)
\]
using the uniform estimates $\left(  \text{\ref{energy_estimate}}\right)  $ we
get that
\[
w^{\varepsilon}\in W^{1,\frac{3(\gamma-1)}{2\gamma-1}}\left(  \mathbb{T}%
^{3}\right)
\]
such that using the Rellich-Kondrachov theorem we get that%
\[
w^{\varepsilon}\rightarrow w=\mu\operatorname{div}_{\theta}u+\left(
\mu+\lambda\right)  \operatorname{div}u+\xi\ast\operatorname{div}%
u-\overline{\rho^{\gamma}}%
\]
strongly for all $r\in\lbrack1,\frac{3(\gamma-1)}{\gamma})$. Armed with this
piece of information we proceed as in Section \ref{NWS} concerning the
nonlinear weak stability in order to conclude that $\rho^{\gamma}%
=\overline{\rho^{\gamma}}$. This ends the proof of Theorem \ref{Main1}. \appendix

\section{Appendix\label{Section_tools}}

\subsection*{Functional analysis tools}

This section is devoted to a quick recall of the main results from functional
analysis that we used thought the text. Consider $p\in\lbrack1,\infty)$, $g\in
L^{p}\left(  \mathbb{T}^{3}\right)  $ and $\omega\in\mathcal{D}\left(
\mathbb{R}^{3}\right)  $ a smooth, nonnegative, even function compactly
supported in the unit ball centered at the origin and with integral equal to
$1$. For all $\varepsilon>0$, we introduce the averaged functions
\begin{equation}
g_{\varepsilon}=g\ast\omega_{\varepsilon}(x)\qquad\hbox{ where }\qquad
\omega_{\varepsilon}=\frac{1}{\varepsilon^{3}}\omega(\frac{x}{\varepsilon
}).\label{notation_approx}%
\end{equation}
We recall the following classical analysis result
\[
\lim_{\varepsilon\rightarrow0}\left\Vert g_{\varepsilon}-g\right\Vert
_{L^{p}({\mathbb{T}}^{3})}=0.
\]
Moreover, for any multi-index $\alpha$ there exists a constant $C\left(
\varepsilon,\alpha\right)  $ such that
\[
\left\Vert \partial^{\alpha}g_{\varepsilon}\right\Vert _{L^{\infty}}\leq
C\left(  \varepsilon,\alpha\right)  \left\Vert g\right\Vert _{L^{p}}.
\]
Next let us recall the following result concerning the commutator between the
convolution with $\omega_{\varepsilon}$ and the product with a given function.
More precisely, we have that

\begin{proposition}
[Sobolev's inequality]Consider $p\in\lbrack1,3)$ and $g\in W^{1,p}\left(
\mathbb{T}^{3}\right)  $ with $\int_{\mathbb{T}^{3}}g=0$. Then,
\[
\left\Vert g\right\Vert _{L^{p_{\star}}}\leq\left\Vert \nabla g\right\Vert
_{L^{p}}%
\]
where $\frac{1}{p^{\star}}=\frac{1}{p}-\frac{1}{3}$.  
\end{proposition}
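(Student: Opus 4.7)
By density of $C^\infty(\mathbb{T}^3)$ in $W^{1,p}(\mathbb{T}^3)$ I reduce to smooth, zero-mean $g$. The plan has two stages: (i) establish the classical Gagliardo--Nirenberg--Sobolev inequality on $\mathbb{R}^3$, then (ii) transfer it to the torus using the zero-mean assumption.

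For (i), I first treat the endpoint $p=1$: for $f\in C^\infty_c(\mathbb{R}^3)$, the one-dimensional fundamental theorem of calculus gives $|f(x)|\le \int_{-\infty}^{x_i}|\partial_i f|\,ds_i$ for each $i\in\{1,2,3\}$; multiplying these three estimates and integrating with the Loomis--Whitney/H\"older inequality yields $\|f\|_{L^{3/2}(\mathbb{R}^3)}\lesssim \|\nabla f\|_{L^1(\mathbb{R}^3)}$. For general $p\in(1,3)$ I then apply the $p=1$ estimate to $|f|^{s}$ with $s=2p/(3-p)$, use H\"older with the conjugate pair $(p,p/(p-1))$, and rearrange to obtain $\|f\|_{L^{p^\star}(\mathbb{R}^3)}\lesssim \|\nabla f\|_{L^p(\mathbb{R}^3)}$ (the exponent $s$ is chosen precisely so that the algebra closes: $3s/2=(s-1)p/(p-1)=p^\star$).

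For (ii), when $1<p<3$ the cleanest route is Fourier-analytic. On zero-mean periodic functions, one has the representation $g=\sum_{j=1}^3 R_j\, I_1(\partial_j g)$, where $I_1=(-\Delta)^{-1/2}$ is the operator with Fourier symbol $|2\pi k|^{-1}$ on $\mathbb{Z}^3\setminus\{0\}$ and $R_j$ are the periodic Riesz transforms. A periodic Hardy--Littlewood--Sobolev inequality bounds $I_1\colon L^p\to L^{p^\star}$, while Mikhlin's multiplier theorem bounds $R_j\colon L^p\to L^p$ for $1<p<\infty$; combining these gives the claim. For the endpoint $p=1$ singular integrals are unavailable, so I would instead extend $g$ periodically to $\mathbb{R}^3$, multiply by a smooth cut-off $\chi$ supported on a slightly enlarged box, apply the Euclidean $L^1\to L^{3/2}$ inequality to $g\chi$, and absorb the error term $g\,\nabla\chi$ using the zero-mean condition (via a Poincar\'e estimate to control $\|g\|_{L^1}$ by $\|\nabla g\|_{L^1}$).

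The main obstacle is precisely this torus-to-Euclidean transfer at $p=1$, because Calder\'on--Zygmund/Mikhlin theory fails there and the zero-mean hypothesis must be used quantitatively to kill the contribution of $\nabla\chi$; all other steps are essentially the textbook Gagliardo--Nirenberg argument.
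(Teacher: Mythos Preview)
Your proof outline is correct and substantially more detailed than what the paper provides. The paper does not give its own proof of this proposition; immediately after stating it (together with the commutator and renormalization lemmas), the authors simply write that ``the proof of the above results follow by adapting in a straightforward manner lemmas 6.7.\ and 6.9 from the book of A.~Novotn\'y--I.~Stra\v{s}kraba.'' In other words, this is treated as a standard reference result, not proved in the paper.

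Your two-stage strategy (Euclidean Gagliardo--Nirenberg via Loomis--Whitney and the power substitution, then transfer to $\mathbb{T}^3$ via periodic Riesz transforms/HLS for $1<p<3$ and via a cutoff plus Poincar\'e at $p=1$) is a legitimate and standard route; the exponent bookkeeping you wrote is correct. One cosmetic point: the statement in the paper is written without an explicit constant, which your argument does not (and cannot) yield with constant $1$; this is almost certainly sloppiness in the paper's statement rather than a sharp claim, so your $\lesssim$ version is what is actually being asserted and used.
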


\begin{proposition}
\label{Prop_ren1}Consider $\beta\in(1,\infty)$ and $\left(  a,b\right)  $ such
that $a\in L^{\beta}\left(  \mathbb{T}^{3}\right)  $ and $b,\nabla b\in
L^{p}\left(  \mathbb{T}^{3}\right)  $ where $\frac{1}{s}=\frac{1}{\beta}%
+\frac{1}{p}\leq1$. Then, we have%
\[
\lim r_{\varepsilon}\left(  a,b\right)  =0\text{ in }L^{s}\left(
\mathbb{T}^{3}\right)
\]
where
\begin{equation}
r_{\varepsilon}\left(  a,b\right)  =\partial_{i}\left(  a_{\varepsilon
}b\right)  -\partial_{i}\left(  \left(  ab\right)  _{\varepsilon}\right)
,\label{def_reminder}%
\end{equation}
with $i\in\left\{  1,2,3\right\}  $.
\end{proposition}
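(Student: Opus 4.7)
The plan is to establish this classical DiPerna--Lions type commutator lemma by deriving a convenient integral representation for $r_{\varepsilon}$, obtaining a uniform $L^{s}$ bound, and then passing to the limit via a density argument on $b$.

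First, since convolution commutes with differentiation, one has $\partial_{i}a_{\varepsilon}=a\ast\partial_{i}\omega_{\varepsilon}$, so that
\[
r_{\varepsilon}(a,b)(x)=\int_{\mathbb{T}^{3}}\partial_{i}\omega_{\varepsilon}(x-y)\,a(y)\,\bigl[b(x)-b(y)\bigr]\,dy+a_{\varepsilon}(x)\,\partial_{i}b(x).
\]
The second summand converges strongly to $a\,\partial_{i}b$ in $L^{s}(\mathbb{T}^{3})$ by Hölder's inequality, the relation $1/s=1/\beta+1/p$, and the standard convergence $a_{\varepsilon}\rightarrow a$ in $L^{\beta}$; it therefore remains to analyse the first summand.

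Next, I would rescale $z=(x-y)/\varepsilon$ and apply the fundamental theorem of calculus, $b(x)-b(x-\varepsilon z)=\varepsilon\int_{0}^{1}\nabla b(x-t\varepsilon z)\cdot z\,dt$, whose factor $\varepsilon$ exactly compensates the $\varepsilon^{-1}$ hidden in $\partial_{i}\omega_{\varepsilon}$. This leads to
\[
\int_{\mathbb{T}^{3}}\partial_{i}\omega_{\varepsilon}(x-y)\,a(y)\,[b(x)-b(y)]\,dy=\int_{B_{1}}\partial_{i}\omega(z)\,a(x-\varepsilon z)\int_{0}^{1}\nabla b(x-t\varepsilon z)\cdot z\,dt\,dz,
\]
valid for $\varepsilon$ small enough that the support of $\omega_{\varepsilon}$ fits in a fundamental domain. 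Minkowski's integral inequality in $L^{s}$, together with Hölder (using $1/s=1/\beta+1/p$), translation invariance of the Lebesgue norms, and the compact support of $\omega$, yields the uniform estimate
\[
\|r_{\varepsilon}(a,b)\|_{L^{s}(\mathbb{T}^{3})}\leq C\,\|a\|_{L^{\beta}}\,\|\nabla b\|_{L^{p}},
\]
with $C$ depending only on $\omega$.

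To conclude, I would approximate $b$ by a sequence $b_{n}\in C^{\infty}(\mathbb{T}^{3})$ with $b_{n}\rightarrow b$ in $W^{1,p}$, and split $r_{\varepsilon}(a,b)=r_{\varepsilon}(a,b-b_{n})+r_{\varepsilon}(a,b_{n})$. The first piece is controlled uniformly in $\varepsilon$ by $C\|a\|_{L^{\beta}}\|\nabla(b-b_{n})\|_{L^{p}}$, hence small for $n$ large. For the second piece, with $b_{n}$ smooth, dominated convergence applied to the rescaled integral representation gives pointwise (and then $L^{s}$) convergence to $a(x)\nabla b_{n}(x)\cdot\int z\,\partial_{i}\omega(z)\,dz=-a(x)\partial_{i}b_{n}(x)$, using the identity $\int z_{j}\partial_{i}\omega(z)\,dz=-\delta_{ij}$; this exactly cancels the limit of $a_{\varepsilon}\partial_{i}b_{n}$, so $r_{\varepsilon}(a,b_{n})\rightarrow 0$ in $L^{s}$. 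A standard $\eta/2$ argument finishes the proof. The main technical point is the uniform $L^{s}$ estimate: the rescaling is chosen precisely to expose $\nabla b$ (rather than $b$) in the integrand, which is what allows Minkowski and Hölder to yield a bound independent of $\varepsilon$.
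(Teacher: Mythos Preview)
Your argument is correct and is exactly the classical DiPerna--Lions commutator estimate: integral representation via $\partial_i\omega_\varepsilon$, rescaling to expose $\nabla b$, Minkowski plus H\"older for the uniform $L^s$ bound, and a density argument in $W^{1,p}$ to pass to the limit. The only cosmetic point is that the fundamental theorem of calculus step is written as if $b$ were smooth; strictly speaking one first establishes the uniform bound for smooth $b$ and extends by density (which you effectively do anyway in the final paragraph), so nothing is missing.

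As for comparison with the paper: the paper does not give a proof of this proposition at all---it simply refers to Lemmas~6.7 and~6.9 of the Novotn\'y--Stra\v{s}kraba monograph. Your proof is precisely the standard argument one finds in that reference (and in the original DiPerna--Lions paper), so you have filled in what the authors omitted rather than taken a different route.
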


One also has the following:

\begin{proposition}
\label{Prop_ren2}Consider $2\leq\beta<\infty$ and $\lambda_{0},\lambda_{1}$
such that $\lambda_{0}<1$ and $-1\leq\lambda_{1}\leq\beta/2-1$. Also, consider
$\rho\in L^{\beta}\left(  \mathbb{T}^{3}\right)  $, $\rho\geq0$ a.e. and
$u,\nabla u\in L^{2}\left(  \mathbb{T}^{3}\right)  $ verifying the following
stationary transport equation%
\[
\operatorname{div}\left(  \rho u\right)  =0
\]
in the sense of distributions. Then, for any function $b\in C^{0}\left(
[0,\infty)\right)  \cap C^{1}\left(  \left(  0,\infty\right)  \right)  $ such
that%
\[
\left\{
\begin{array}
[c]{l}%
b^{\prime}\left(  t\right)  \leq ct^{-\lambda_{0}}\text{ for }t\in(0,1],\\
\left\vert b^{\prime}\left(  t\right)  \right\vert \leq ct^{\lambda_{1}}\text{
for }t\geq1
\end{array}
\right.
\]
it holds that%
\begin{equation}
\operatorname{div}\left(  b\left(  \rho\right)  u\right)  +\left\{  \rho
b^{\prime}\left(  \rho\right)  -b\left(  \rho\right)  \right\}
\operatorname{div}u=0. \label{renorm}%
\end{equation}
in the sense of distributions.
\end{proposition}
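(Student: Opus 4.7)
The plan is to mimic the classical DiPerna--Lions renormalization argument adapted to the stationary setting, where the convolution commutator from Proposition \ref{Prop_ren1} plays the same role as the time-derivative commutator in the evolutionary version. First I would regularize the transport identity by convolution with the mollifier $\omega_{\varepsilon}$ introduced in \eqref{notation_approx}. Since $\operatorname{div}(\rho u)=0$ holds in $\mathcal{D}'(\mathbb{T}^{3})$, writing $\rho_{\varepsilon}=\rho\ast\omega_{\varepsilon}$ and mollifying component by component gives
\[
\operatorname{div}(\rho_{\varepsilon} u)=R_{\varepsilon},\qquad R_{\varepsilon}:=\sum_{i=1}^{3}\bigl(\partial_{i}(\rho_{\varepsilon} u^{i})-\partial_{i}((\rho u^{i})_{\varepsilon})\bigr)=\sum_{i=1}^{3}r_{\varepsilon}(\rho,u^{i}),
\]
with $r_{\varepsilon}$ the commutator defined in \eqref{def_reminder}. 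Since $\rho\in L^{\beta}(\mathbb{T}^{3})$ and $u^{i},\nabla u^{i}\in L^{2}(\mathbb{T}^{3})$, Proposition \ref{Prop_ren1} (applied with $p=2$) ensures that $R_{\varepsilon}\to0$ strongly in $L^{s}(\mathbb{T}^{3})$, where $\tfrac{1}{s}=\tfrac{1}{\beta}+\tfrac{1}{2}$.

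Next, because $\rho_{\varepsilon}$ is smooth, I can multiply the regularized equation by $b'(\rho_{\varepsilon})$ (which is well defined after approximating $b$ if necessary, using $\lambda_{0}<1$ to handle the behavior near $0$) and apply the classical chain rule to obtain
\[
\operatorname{div}\bigl(b(\rho_{\varepsilon})u\bigr)+\bigl\{\rho_{\varepsilon} b'(\rho_{\varepsilon})-b(\rho_{\varepsilon})\bigr\}\operatorname{div}u=b'(\rho_{\varepsilon})R_{\varepsilon}.
\]
It then remains to justify passage to the limit in each term. The left-hand side is handled by the a.e.\ convergence $\rho_{\varepsilon}\to\rho$ along a subsequence together with dominated convergence; here the growth conditions on $b'$ yield $|b(\rho)|+|\rho b'(\rho)|\le C(1+\rho^{1+\lambda_{1}})$, and since $1+\lambda_{1}\le\beta/2\le\beta$, these quantities are uniformly equi-integrable, so that $b(\rho_{\varepsilon})u$ and $(\rho_{\varepsilon} b'(\rho_{\varepsilon})-b(\rho_{\varepsilon}))\operatorname{div}u$ converge in $L^{1}(\mathbb{T}^{3})$.

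The main obstacle is the right-hand side $b'(\rho_{\varepsilon})R_{\varepsilon}$, and this is precisely where the upper bound $\lambda_{1}\le\beta/2-1$ is used. The growth conditions yield $|b'(\rho_{\varepsilon})|\le C(\rho_{\varepsilon}^{-\lambda_{0}}\mathbf{1}_{\{\rho_{\varepsilon}\le1\}}+\rho_{\varepsilon}^{\lambda_{1}}\mathbf{1}_{\{\rho_{\varepsilon}\ge1\}})$, and the critical large-value part gives $b'(\rho_{\varepsilon})$ uniformly bounded in $L^{2\beta/(\beta-2)}(\mathbb{T}^{3})$ because $\rho_{\varepsilon}^{\lambda_{1}}\in L^{\beta/\lambda_{1}}$ with $\beta/\lambda_{1}\ge 2\beta/(\beta-2)$ (the near-zero part contributes a bounded term since $\lambda_{0}<1$). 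A direct Hölder computation gives
\[
\frac{1}{s}+\frac{\beta-2}{2\beta}=\frac{1}{\beta}+\frac{1}{2}+\frac{1}{2}-\frac{1}{\beta}=1,
\]
so that $b'(\rho_{\varepsilon})R_{\varepsilon}\to 0$ in $L^{1}(\mathbb{T}^{3})$, which is exactly what is needed to close the argument. Combining these convergences and letting $\varepsilon\to 0$ in the distributional formulation yields \eqref{renorm}; the tightness of the exponent matching above is the heart of the proof and explains the role of the numerical conditions on $\lambda_{0}$ and $\lambda_{1}$ in the statement.
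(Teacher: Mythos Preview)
Your argument is the standard DiPerna--Lions regularization, which is precisely what the paper invokes: it gives no self-contained proof but defers to Lemmas~6.7 and~6.9 of Novotn\'{y}--Stra\v{s}kraba, and those lemmas proceed exactly along the lines you describe (mollify, use the commutator estimate of Proposition~\ref{Prop_ren1}, multiply by $b'(\rho_\varepsilon)$, pass to the limit). Your H\"older bookkeeping for $b'(\rho_\varepsilon)R_\varepsilon$ is correct and pinpoints why the constraint $\lambda_1\le\beta/2-1$ is exactly what is needed.

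One point deserves more than your parenthetical ``after approximating $b$ if necessary'': the claim that ``the near-zero part contributes a bounded term since $\lambda_0<1$'' is not literally true at this stage, because $\rho_\varepsilon$ may vanish on a set of positive measure and then $\rho_\varepsilon^{-\lambda_0}$ lies in no $L^q$. The remedy (and this is how the referenced lemmas proceed) is a two-step argument: first establish \eqref{renorm} for $b\in C^1([0,\infty))$ with bounded derivative, where $b'(\rho_\varepsilon)\in L^\infty$ and your commutator estimate closes directly; then approximate a general $b$ satisfying the stated growth conditions by such functions and pass to the limit in \eqref{renorm} itself. The condition $\lambda_0<1$ is used only in this second limit, to ensure that $b(\rho)$ and $\rho b'(\rho)-b(\rho)$ remain bounded near $\rho=0$. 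With this structure made explicit your proof is complete and matches the referenced one.
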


\noindent The proof of the above results follow by adapting in a
straightforward manner lemmas $6.7.$ and $6.9$ from the book of A.
Novotn\'{y}- I.Stra\v{s}kraba
%\cite{NoSt}
pages $155-188$. We end up this section with the following theorem that will
be used to prove existence of solutions:

\begin{theorem}
[Schauder-Leray]\label{Schauder_Leray}Let $\mathcal{T}$ be a continuous
compact mapping of a Banach space $\mathcal{B}$ into itself with the property
that there exists a real positive number $M>0$ such that%
\[
\left\Vert x\right\Vert _{\mathcal{B}}\leq M,
\]
for all $x$ such that $x=\lambda\mathcal{T}x$ for some $\lambda\in\left[
0,1\right]  $. Then $\mathcal{T}$ admits a fixed point.
\end{theorem}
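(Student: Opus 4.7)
The plan is to deduce this from the classical Schauder fixed point theorem (continuous compact self-maps of a nonempty, closed, bounded, convex subset of a Banach space have a fixed point) via a retraction trick. The idea is to cut off $\mathcal{T}$ so that it becomes a self-map of a large ball, apply Schauder, and then use the a priori bound hypothesis to show that the cut-off was not actually active at the fixed point.

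Concretely, I would fix any radius $R>M$, say $R=M+1$, and introduce the radial retraction $r:\mathcal{B}\to\overline{B_{R}(0)}$ defined by $r(y)=y$ if $\|y\|_{\mathcal{B}}\leq R$ and $r(y)=Ry/\|y\|_{\mathcal{B}}$ otherwise. This $r$ is a continuous map of $\mathcal{B}$ onto $\overline{B_{R}(0)}$ with $\|r(y)\|_{\mathcal{B}}\leq\|y\|_{\mathcal{B}}$. Define $\tilde{\mathcal{T}}:=r\circ\mathcal{T}$ restricted to the closed ball $K:=\overline{B_{R}(0)}$. Then $\tilde{\mathcal{T}}(K)\subset K$ by construction; it is continuous as a composition of continuous maps; and it is compact, since $\mathcal{T}(K)$ is relatively compact in $\mathcal{B}$ (by compactness of $\mathcal{T}$) and $r$ is continuous. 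The set $K$ is nonempty, closed, bounded, and convex, so Schauder's theorem yields a point $x^{\ast}\in K$ with $x^{\ast}=\tilde{\mathcal{T}}(x^{\ast})=r(\mathcal{T}(x^{\ast}))$.

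It remains to show that this $x^{\ast}$ is an actual fixed point of $\mathcal{T}$, i.e., that the retraction $r$ was trivial at $\mathcal{T}(x^{\ast})$. Suppose for contradiction that $\|\mathcal{T}(x^{\ast})\|_{\mathcal{B}}>R$. Then by the definition of $r$ one has
\[
x^{\ast}=\frac{R}{\|\mathcal{T}(x^{\ast})\|_{\mathcal{B}}}\,\mathcal{T}(x^{\ast})=\lambda\,\mathcal{T}(x^{\ast}),\qquad \lambda:=\frac{R}{\|\mathcal{T}(x^{\ast})\|_{\mathcal{B}}}\in(0,1),
\]
and also $\|x^{\ast}\|_{\mathcal{B}}=R$. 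The hypothesis on solutions of $x=\lambda\mathcal{T}x$ would then force $R=\|x^{\ast}\|_{\mathcal{B}}\leq M$, contradicting $R=M+1$. Hence $\|\mathcal{T}(x^{\ast})\|_{\mathcal{B}}\leq R$, so $r(\mathcal{T}(x^{\ast}))=\mathcal{T}(x^{\ast})$, and $x^{\ast}=\mathcal{T}(x^{\ast})$.

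The only nontrivial step is the second paragraph, namely verifying the hypotheses of Schauder's theorem for $\tilde{\mathcal{T}}$; the contradiction argument in the third paragraph is straightforward once one notices that the radial projection produces a relation of precisely the form $x=\lambda\mathcal{T}x$ to which the a priori bound applies. Thus the main (and only) obstacle is invoking an appropriate version of Schauder's theorem for continuous compact self-maps of closed bounded convex sets in a Banach space, which is standard.
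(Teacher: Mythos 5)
Your argument is correct: the radial retraction onto $\overline{B_{M+1}(0)}$, an application of Schauder's theorem to $r\circ\mathcal{T}$, and the use of the a priori bound to rule out the case $\Vert\mathcal{T}(x^{\ast})\Vert_{\mathcal{B}}>R$ together give the claim. The paper does not prove this theorem itself but cites Theorem 11.3 of \cite{GilTru}, whose proof is precisely this retraction-plus-Schauder argument, so your proposal matches the intended (standard) proof.
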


\noindent For a proof of this result see Theorem $11.3.$ page $280$ from
\cite{GilTru}.

\subsection{Fourier analysis tools}

In this section, we recall certain results concerning Fourier multiplier
operators on the torus and the whole space and we recall the relation between
them. More precisely, for the rest of the paper of this section we fix a
\textit{bounded} function $m:\mathbb{R}^{n}\backslash\left\{  0\right\}
\rightarrow\mathbb{C}$.

\begin{definition}
We say that $m$ is a $\left(  p,p\right)  $-multiplier on $\mathbb{R}^{n}$ if
the operator $S$ defined by%
\begin{equation}
S\left(  g\right)  =\mathcal{F}^{-1}\left(  m\left(  \xi\right)
\mathcal{F}\left(  g\right)  \right)  , \label{definitie_S}%
\end{equation}
for all tempered distributions $g$ which have the support of their Fourier
transform supported away from $0$ can be extended to an operator that maps
$L^{p}\left(  \mathbb{R}^{n}\right)  $ into itself. The class of all $\left(
p,p\right)  $-multipliers on $\mathbb{R}^{n}$ is denoted $\mathit{M}%
_{p}\left(  \mathbb{R}^{n}\right)  $ and we define the $\mathit{M}_{p}$-norm
of $m$ as being the operatorial norm of the associated operator $S$ i.e.
\[
\left\Vert m\right\Vert _{\mathit{M}_{p}\left(  \mathbb{R}^{n}\right)
}:\overset{def.}{=}\left\Vert S\right\Vert _{\mathcal{L}\left(  L^{p}\left(
\mathbb{R}^{n}\right)  ,L^{p}\left(  \mathbb{R}^{n}\right)  \right)  }.
\]

\end{definition}

\medskip

\noindent In the following we denote $L_{0}^{p}\left(  \mathbb{T}^{n}\right)
$ the closed subspace of $L^{p}\left(  \mathbb{T}^{n}\right)  $ with mean
value $0$.

\begin{definition}
We say that $\left\{  m(k)\right\}  _{k\in\mathbb{Z}^{n}\backslash\left\{
0\right\}  }$ is a $\left(  p,p\right)  $-multiplier on the torus if the
operator $T$ defined by%
\begin{equation}
T\left(  P\right)  \left(  x\right)  =P\left(  x\right)  =\sum_{k\in
\mathbb{Z}^{n}\backslash\left\{  0\right\}  }m\left(  k\right)  a_{k}%
\exp\left(  2\pi ik\cdot x\right)  , \label{definitie_T}%
\end{equation}
for all trigonometric polynomials with zero mean i.e.
\[
P\left(  x\right)  =\sum_{k\in\mathbb{Z}^{n}}a_{k}\exp\left(  2\pi ik\cdot
x\right)  ,
\]
with $\left(  a_{k}\right)  _{k\in\mathbb{Z}^{n}}$ with finite support and
$a_{0}=0$, can be extended to an operator that maps $L_{0}^{p}\left(
\mathbb{T}^{n}\right)  $ into itself. The class of all $\left(  p,p\right)
$-multipliers on the torus is denoted $\mathit{M}_{p}\left(  \mathbb{Z}%
^{n}\right)  $ and we define the $\mathit{M}_{p}$-norm of $m$ as being the
operatorial norm of the associated operator $T$ i.e.
\[
\left\Vert m\right\Vert _{\mathit{M}_{p}\left(  \mathbb{Z}^{n}\right)
}:\overset{def.}{=}\left\Vert S\right\Vert _{\mathcal{L}\left(  L^{p}\left(
\mathbb{T}^{n}\right)  ,L^{p}\left(  \mathbb{T}^{n}\right)  \right)  }.
\]

\end{definition}

\medskip

One of the classical subjects in Fourier analysis tries to capture the
properties that $m$ has to satisfy in order to be a $\left(  p,p\right)
$-Fourier multiplier. In the following, we recall Mihlin's multiplier theorem
that gives a sufficient conditions such that $m$ to be a Fourier multiplier on
$\mathbb{R}^{n}$.

\begin{theorem}
Let $m\left(  \xi\right)  $ be a complex-valued bounded function on
$\mathbb{R}^{n}\backslash\{0\}$ that satisfies Mihlin's condition%
\begin{equation}
\left\vert \partial_{\xi}^{\alpha}m\left(  \xi\right)  \right\vert \leq
A\left\vert \xi\right\vert ^{-\left\vert \alpha\right\vert },
\label{Milhin_condition}%
\end{equation}
for all multi-indices $\left\vert \alpha\right\vert \leq\left[  \frac{n}%
{2}\right]  +1$. Then, for all $p\in\left(  1,\infty\right)  $, $m$ is a
$\left(  p,p\right)  $-multiplier on $\mathbb{R}^{n}$ and there exists a
constant $C_{n}$ depending only on the dimension $n$ such that for all $g\in
L^{p}\left(  \mathbb{R}^{n}\right)  :$
\[
\left\Vert m\right\Vert _{\mathit{M}_{p}\left(  \mathbb{R}^{n}\right)  }\leq
C_{n}\max\left\{  p,\frac{1}{p-1}\right\}  \left(  A+\left\Vert m\right\Vert
_{L^{\infty}\left(  \mathbb{R}^{n}\right)  }\right)  \left\Vert g\right\Vert
_{L^{p}\left(  \mathbb{R}^{n}\right)  }\text{.}%
\]

\end{theorem}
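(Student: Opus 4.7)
The plan is to follow the classical Calderón--Zygmund route: reduce the $L^p$ boundedness of the Fourier multiplier operator $T_m$ to a Plancherel-based $L^2$ estimate plus a suitable kernel regularity condition, then interpolate and dualize.

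First, I would perform a Littlewood--Paley style decomposition of $m$. Fix a smooth radial cutoff $\varphi\in C_c^\infty(\mathbb{R}^n)$ supported in the annulus $\{1/2\le|\xi|\le 2\}$ with $\sum_{j\in\mathbb{Z}}\varphi(\xi/2^j)=1$ for $\xi\neq 0$, and set $m_j(\xi)=m(\xi)\varphi(\xi/2^j)$, so that $m=\sum_j m_j$ (away from the origin). Let $K_j=\mathcal{F}^{-1}(m_j)$. Using Mihlin's hypothesis \eqref{Milhin_condition}, each derivative $\partial_\xi^\alpha m_j$ is supported in $\{|\xi|\sim 2^j\}$ and satisfies $|\partial_\xi^\alpha m_j(\xi)|\lesssim A\,2^{-j|\alpha|}$ for $|\alpha|\le N:=[n/2]+1$. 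Integration by parts $N$ times in the inversion formula then yields the pointwise bound
\begin{equation*}
|K_j(x)|\;\lesssim\; A\,\frac{2^{jn}}{(1+2^j|x|)^{N}},\qquad |\nabla K_j(x)|\;\lesssim\; A\,\frac{2^{j(n+1)}}{(1+2^j|x|)^{N}}.
\end{equation*}
Summing the geometric-type series in $j$, depending on whether $2^j|x|\lessgtr 1$, produces the Calderón--Zygmund kernel estimates
\begin{equation*}
|K(x)|\;\lesssim\; \frac{A}{|x|^n},\qquad |\nabla K(x)|\;\lesssim\;\frac{A}{|x|^{n+1}},\qquad x\neq 0,
\end{equation*}
which in particular imply the Hörmander regularity condition $\int_{|x|\ge 2|y|}|K(x-y)-K(x)|\,dx\lesssim A$ uniformly in $y\neq 0$.

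Next, I would establish the endpoint $L^2$ bound by Plancherel: since $m\in L^\infty$, the operator $S$ initially defined on Schwartz functions with Fourier transform supported away from the origin satisfies $\|S f\|_{L^2}=\|m\hat f\|_{L^2}\le \|m\|_{L^\infty}\|f\|_{L^2}$, and thus extends by density to all of $L^2(\mathbb{R}^n)$. Combined with the Hörmander kernel condition above, the standard Calderón--Zygmund decomposition argument then produces a weak-type $(1,1)$ estimate: for $f\in L^1\cap L^2$ and $\lambda>0$, split $f=g+\sum_Q b_Q$ at level $\lambda$ where $g$ is the good part ($\|g\|_{L^2}^2\lesssim \lambda\|f\|_{L^1}$) and the bad part is supported on disjoint cubes with zero mean, giving $|\{|Sf|>\lambda\}|\lesssim (A+\|m\|_\infty)\|f\|_{L^1}/\lambda$.

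Finally, Marcinkiewicz interpolation between weak-$(1,1)$ and strong-$(2,2)$ yields strong $(p,p)$ boundedness for $p\in(1,2]$ with operator norm $\lesssim C_n\,\frac{p}{p-1}(A+\|m\|_\infty)$; for $p\in[2,\infty)$, one applies duality using that $\overline{m(-\xi)}$ satisfies exactly the same Mihlin hypothesis with the same constants, giving the symmetric factor $C_n\,p\,(A+\|m\|_\infty)$. Combining the two ranges produces the claimed bound with $\max\{p,1/(p-1)\}$. The main delicate step, as usual, is the derivation of the pointwise kernel estimates from the Mihlin frequency-side hypothesis and the verification that summing the Littlewood--Paley pieces produces a genuine Calderón--Zygmund kernel; the interpolation and duality steps are then mechanical.
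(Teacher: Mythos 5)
Note first that the paper does not prove this statement at all: it is quoted as Mihlin's multiplier theorem with a pointer to Grafakos (Theorem 5.2.7), so the only issue is whether your sketch stands on its own. It does not, because the decisive step fails. With only $N=\left[\frac{n}{2}\right]+1$ derivatives of $m$ at your disposal, integration by parts does give the correct piecewise bound $|K_j(x)|\lesssim A\,2^{jn}(1+2^j|x|)^{-N}$, but the summation in $j$ does not produce a Calder\'on--Zygmund kernel: on the range $2^j|x|>1$ the terms are of size $A\,|x|^{-N}2^{j(n-N)}$, and since $N=\left[\frac{n}{2}\right]+1\leq n$ for every $n\geq 2$ (and $N=n$ when $n=1$), the series over $j$ diverges. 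The same problem, aggravated by one extra power of $2^j$, destroys the claimed bound $|\nabla K(x)|\lesssim A|x|^{-n-1}$. Pointwise kernel estimates of Calder\'on--Zygmund type genuinely require on the order of $n+1$ derivatives of $m$; they are simply not available under the sharp hypothesis $|\alpha|\leq\left[\frac{n}{2}\right]+1$, and this is exactly the delicate point of the Mihlin--H\"ormander theorem, so the step you describe as producing ``a genuine Calder\'on--Zygmund kernel'' is the step that would fail.

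The standard repair, which is the argument behind the cited result, is to bypass pointwise bounds and verify H\"ormander's integral condition directly by weighted $L^2$ estimates. By Plancherel, $\bigl\Vert\,|x|^{N}K_j\bigr\Vert_{L^2}\lesssim\sum_{|\alpha|=N}\Vert\partial^{\alpha}m_j\Vert_{L^2}\lesssim A\,2^{j(\frac{n}{2}-N)}$, and similarly $\Vert(1+2^j|x|)^{N}K_j\Vert_{L^2}\lesssim A\,2^{jn/2}$ and $\Vert\nabla K_j\Vert_{L^1}\lesssim A\,2^{j}$ by Cauchy--Schwarz against $(1+2^j|x|)^{-N}$, which is square integrable precisely because $2N>n$. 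This yields $\int_{|x|\geq 2|y|}\bigl|K_j(x-y)-K_j(x)\bigr|\,dx\lesssim A\,\min\bigl\{2^{j}|y|,\ (2^{j}|y|)^{\frac{n}{2}-N}\bigr\}$, which is summable over $j\in\mathbb{Z}$ exactly because $N>\frac{n}{2}$; summing gives the H\"ormander condition with constant $C_nA$, and from there your outline (Plancherel for $L^2$, Calder\'on--Zygmund decomposition for weak $(1,1)$, interpolation and duality) goes through. One further small point: naive Marcinkiewicz interpolation between weak $(1,1)$ and strong $(2,2)$ has a constant blowing up as $p\to 2^{-}$, so to obtain the stated growth $\max\{p,\frac{1}{p-1}\}$ you should first fix an intermediate exponent and re-interpolate (or interpolate with the strong endpoint handled as in Grafakos); this is routine bookkeeping, unlike the kernel issue above, which is a genuine gap.
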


\noindent A proof of this result can be found in L. Grafakos's book, see
\cite{Grafakos2008} Theorem $5.2.7.$, page $367$.

\begin{remark}
\label{norma}One can check by direct calculation that $m:\mathbb{R}%
^{3}\backslash\left\{  0\right\}  \rightarrow\mathbb{R}$ defined by%
\[
m\left(  \xi\right)  =\frac{\left\vert \xi_{3}\right\vert ^{2}}{a_{1}%
\left\vert \xi_{1}\right\vert ^{2}+a_{2}\left\vert \xi_{2}\right\vert
^{2}+a_{3}\left\vert \xi_{3}\right\vert ^{2}},
\]
verifies the Milhin condition $\left(  \text{\ref{Milhin_condition}}\right)  $
with $A=\max\left\{  A_{0},A_{1},A_{2}\right\}  $ with
\[
A_{0}=\frac{1}{a_{3}},
\]
and
\[
A_{1}=\max\left\{  \frac{\sqrt{a_{1}}}{a_{3}},\frac{\sqrt{a_{2}}}{a_{3}}%
,\frac{1}{\sqrt{a_{3}}}\right\}  \frac{1}{\sqrt{\min\left\{  a_{1},a_{2}%
,a_{3}\right\}  }},
\]
and
\[
A_{2}=\max\left\{  \frac{a_{1}}{a_{3}},\frac{a_{2}}{a_{3}},1\right\}  \frac
{1}{\min\left\{  a_{1},a_{2},a_{3}\right\}  },
\]
where each $A_{i}$ represents the constant appearing in the $\left(
\text{\ref{Milhin_condition}}\right)  $ respectively for the $|\alpha|=0$,
$|\alpha|=1$ and $|\alpha|=2$ derivatives. Milhin's theorem implies that $m$
is a Fourier multiplier on $\mathbb{R}^{n}$.
\end{remark}

\begin{definition}
Let $\xi_{0}\in\mathbb{R}^{n}$. A bounded function $m$ on $\mathbb{R}^{n}$ is
called regulated at the point $\xi_{0}$ if%
\[
\lim_{\varepsilon\rightarrow0}\int_{\left\vert t\right\vert \leq\varepsilon
}\left(  m\left(  \xi_{0}-\xi\right)  -m\left(  \xi_{0}\right)  \right)
d\xi=0.
\]

\end{definition}

Obviously, if $m$ is continuous in $\xi_{0}$ then $m$ is regulated at the
point $\xi_{0}$. The following result is the key point in transferring the
Milhin theorem on the torus:

\begin{lemma}
\label{LemmaMultiplier}Let $T$ be a operator on $\mathbb{R}^{n}$ whose
multiplier is $m\left(  \xi\right)  $ and let $S$ be the operator on
$\mathbb{T}^{n}$ whose multiplier is the sequence $\left\{  m\left(  k\right)
\right\}  _{k\in\mathbb{Z}^{n}}$. Assume that $m\left(  \xi\right)  $ is
regular at every point in $\mathbb{Z}^{n}\backslash\left\{  0\right\}  $.
Suppose that $P$ and $Q$ are trigonometric polynomials on $\mathbb{T}^{n}$ and
let $L_{\varepsilon}\left(  x\right)  =\exp\left(  -\pi\varepsilon\left\vert
x\right\vert ^{2}\right)  $ for $x\in\mathbb{R}^{n}$ and $\varepsilon>0$. Then
the following identity is valid whenever $\alpha,\beta>0$ and $\alpha
+\beta=1:$%
\[
\lim_{\varepsilon\rightarrow0}\varepsilon^{\frac{n}{2}}\int_{\mathbb{R}^{n}%
}T\left(  PL_{\varepsilon\alpha}\right)  \left(  x\right)  \overline
{(QL_{\varepsilon\beta})\left(  x\right)  }dx=\int_{\mathbb{T}^{n}}S\left(
P\right)  \left(  x\right)  \overline{Q\left(  x\right)  }dx.
\]

\end{lemma}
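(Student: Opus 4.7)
My plan is to transfer the computation to frequency space via Plancherel on $\mathbb{R}^n$ and then isolate the diagonal contribution. First I would reduce to the case where the trigonometric polynomials $P=\sum_k a_k e^{2\pi i k\cdot x}$ and $Q=\sum_l b_l e^{2\pi i l\cdot x}$ have zero mean, so that all sums run over $\mathbb{Z}^n\setminus\{0\}$, the set on which $m$ is regulated and $S$ is defined. Using the modulation rule together with the explicit formula $\widehat{L_\delta}(\xi)=\delta^{-n/2}e^{-\pi|\xi|^2/\delta}$, I will compute
\[
\widehat{PL_{\varepsilon\alpha}}(\xi)=\sum_k a_k(\varepsilon\alpha)^{-n/2}e^{-\pi|\xi-k|^2/(\varepsilon\alpha)},
\]
and analogously for $\widehat{QL_{\varepsilon\beta}}$. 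Plancherel together with the definition $\widehat{Tf}=m\,\widehat f$ then converts the left-hand side of the identity into
\[
\varepsilon^{n/2}\sum_{k,l}a_k\bar b_l\,(\varepsilon\alpha)^{-n/2}(\varepsilon\beta)^{-n/2}\int_{\mathbb{R}^n}m(\xi)\,e^{-\pi|\xi-k|^2/(\varepsilon\alpha)}\,e^{-\pi|\xi-l|^2/(\varepsilon\beta)}\,d\xi.
\]

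The key algebraic ingredient is a Gaussian completion of squares. Using $\alpha+\beta=1$, a direct expansion yields the identity
\[
\frac{|\xi-k|^2}{\alpha}+\frac{|\xi-l|^2}{\beta}=\frac{|\xi-(\beta k+\alpha l)|^2}{\alpha\beta}+|k-l|^2,
\]
after which the substitution $\xi=\sqrt{\varepsilon\alpha\beta}\,\eta+\beta k+\alpha l$ in each integral absorbs all remaining powers of $\varepsilon$ and leaves
\[
\sum_{k,l}a_k\bar b_l\,e^{-\pi|k-l|^2/\varepsilon}\int_{\mathbb{R}^n}m\bigl(\sqrt{\varepsilon\alpha\beta}\,\eta+\beta k+\alpha l\bigr)\,e^{-\pi|\eta|^2}\,d\eta.
\]
Since $P$ and $Q$ are trigonometric polynomials the sum is finite, so I can treat each pair $(k,l)$ separately. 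For off-diagonal pairs $k\neq l$ one has $|k-l|\ge 1$, so $e^{-\pi|k-l|^2/\varepsilon}$ decays faster than any polynomial in $\varepsilon$, while the inner integral stays bounded by $\|m\|_{L^\infty}$; hence these contributions vanish in the limit. On the diagonal $k=l$, the prefactor equals $1$ and $\beta k+\alpha l=k$.

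The remaining task, and the step I expect to be the main obstacle, is to show
\[
\lim_{\varepsilon\to 0}\int_{\mathbb{R}^n}m\bigl(\sqrt{\varepsilon\alpha\beta}\,\eta+k\bigr)e^{-\pi|\eta|^2}\,d\eta=m(k),
\]
using only the regulated condition at $k$ and not continuity. I will handle this by a standard truncation. Given $\delta>0$, fix $R$ so large that $\|m\|_{L^\infty}\int_{|\eta|>R}e^{-\pi|\eta|^2}d\eta<\delta$, which controls the tail uniformly in $\varepsilon$. On $|\eta|\le R$ the Gaussian $e^{-\pi|\eta|^2}$ is radially decreasing, so it can be uniformly approximated in $L^1$ by nonnegative linear combinations of indicators of balls centered at $0$; expressing the inner integral this way reduces it to a finite sum of ball averages of $m$ around $k$ of radius at most $R\sqrt{\varepsilon\alpha\beta}\to 0$, and the regulated hypothesis at $k$ says each such average tends to $m(k)$ times the volume of the corresponding ball. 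Letting $\varepsilon\to 0$ first and $R\to\infty$ afterwards, combined with $\int_{\mathbb{R}^n}e^{-\pi|\eta|^2}d\eta=1$, finishes the diagonal limit. Summing diagonal contributions yields $\sum_k m(k)\,a_k\bar b_k$, which, by orthogonality of the characters $\{e^{2\pi i k\cdot x}\}_{k\in\mathbb{Z}^n}$ on $\mathbb{T}^n$ and the definition $S(P)=\sum_k m(k)a_k e^{2\pi i k\cdot x}$, coincides with $\int_{\mathbb{T}^n}S(P)(x)\overline{Q(x)}\,dx$, completing the proof.
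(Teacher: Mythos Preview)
Your proposal is correct and follows precisely the classical argument of Grafakos (Lemma~3.6.8) that the paper cites in lieu of its own proof: Plancherel on $\mathbb{R}^n$, the Gaussian completing-the-square identity exploiting $\alpha+\beta=1$, exponential decay of the off-diagonal terms, and the regulated condition to handle the diagonal limit. The paper provides no independent argument beyond that reference, so there is nothing further to compare.
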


The above lemma is different from Lemma $3.6.8.$ from \cite{Grafakos2008} page
$224$ only in one aspect: as we are looking to obtain results for functions
with mean value $0$, we may ask $m$ to be regulated at every point of
$\mathbb{Z}^{n}\backslash\left\{  0\right\}  $ instead of $\mathbb{Z}^{n}$.
However, the proof is the same word for word. Finally, we are able to ass the following

\begin{theorem}
\label{multipliers}Suppose that $m:\mathbb{R}^{n}\backslash\left\{  0\right\}
\rightarrow\mathbb{C}$ $\left(  p,p\right)  $-Fourier multiplier on
$\mathbb{R}^{n}$ for some $p\in\lbrack1,\infty)$ and that it is regulated at
every point in $\mathbb{Z}^{n}\backslash\left\{  0\right\}  $. Then, $\left\{
m\left(  k\right)  \right\}  _{k\in\mathbb{Z}^{n}\backslash\left\{  0\right\}
}$ defines a $\left(  p,p\right)  $-Fourier multiplier and%
\[
\left\Vert \left\{  m\left(  k\right)  \right\}  _{k\in\mathbb{Z}%
^{n}\backslash\left\{  0\right\}  }\right\Vert _{\mathit{M}_{p}\left(
\mathbb{Z}^{n}\right)  }\leq\left\Vert m\right\Vert _{\mathit{M}_{p}\left(
\mathbb{R}^{n}\right)  }.
\]

\end{theorem}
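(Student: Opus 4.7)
The plan is to use Lemma \ref{LemmaMultiplier} as a black box that converts the action of $S$ against trigonometric test polynomials on $\mathbb{T}^n$ into the action of $T$ against Gaussian-truncated lifts on $\mathbb{R}^n$, and then to combine this with H\"older on $\mathbb{R}^n$ and with the boundedness of $T$ on $L^p(\mathbb{R}^n)$. Throughout I take $P,Q$ trigonometric polynomials on $\mathbb{T}^n$ with mean value zero, $\alpha,\beta>0$ with $\alpha+\beta=1$ to be fixed later, and set $L_{\varepsilon\gamma}(x)=\exp(-\pi\varepsilon\gamma|x|^{2})$.

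First I would apply Lemma \ref{LemmaMultiplier}, then H\"older on $\mathbb{R}^n$ with exponents $p$ and $p'$, and finally the $L^p(\mathbb{R}^n)$-boundedness of $T$, to obtain
\[
\Bigl|\int_{\mathbb{T}^{n}} S(P)\overline{Q}\,dx\Bigr|
=\lim_{\varepsilon\to 0}\varepsilon^{n/2}\Bigl|\int_{\mathbb{R}^{n}} T(PL_{\varepsilon\alpha})\,\overline{QL_{\varepsilon\beta}}\,dx\Bigr|
\leq \|m\|_{\mathit{M}_p(\mathbb{R}^n)}\,\varepsilon^{n/2}\|PL_{\varepsilon\alpha}\|_{L^p(\mathbb{R}^n)}\|QL_{\varepsilon\beta}\|_{L^{p'}(\mathbb{R}^n)}.
\]
The core computational step is then to evaluate the asymptotics of $\varepsilon^{n/2}\|fL_{\varepsilon\gamma}\|_{L^r(\mathbb{R}^n)}^r$ as $\varepsilon\to 0$ when $f$ is $1$-periodic. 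By the change of variables $y=\sqrt{\varepsilon}x$ this quantity equals $\int_{\mathbb{R}^n}|f(y/\sqrt{\varepsilon})|^r e^{-r\pi\gamma|y|^2}dy$; since $|f|^r$ is periodic, standard Riemann-sum/weak-$\ast$ reasoning against the Gaussian weight gives the limit $\|f\|_{L^r(\mathbb{T}^n)}^r(r\gamma)^{-n/2}$.

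Applied to $P$ and $Q$, this yields
\[
\lim_{\varepsilon\to 0}\varepsilon^{n/2}\|PL_{\varepsilon\alpha}\|_{L^p}\|QL_{\varepsilon\beta}\|_{L^{p'}}
=(p\alpha)^{-n/(2p)}(p'\beta)^{-n/(2p')}\|P\|_{L^p(\mathbb{T}^n)}\|Q\|_{L^{p'}(\mathbb{T}^n)}.
\]
Choosing $\alpha=1/p$ and $\beta=1/p'$ makes $(p\alpha)^{-n/(2p)}(p'\beta)^{-n/(2p')}=1$, so I get the clean bound
\[
\Bigl|\int_{\mathbb{T}^{n}} S(P)\overline{Q}\,dx\Bigr|\leq \|m\|_{\mathit{M}_p(\mathbb{R}^n)}\|P\|_{L^p(\mathbb{T}^n)}\|Q\|_{L^{p'}(\mathbb{T}^n)}.
\]
Taking the supremum over zero-mean trigonometric polynomials $Q$ with $\|Q\|_{L^{p'}}\le 1$ (which is a norming subset of $L^{p'}_0(\mathbb{T}^n)$) delivers $\|S(P)\|_{L^p(\mathbb{T}^n)}\leq \|m\|_{\mathit{M}_p(\mathbb{R}^n)}\|P\|_{L^p(\mathbb{T}^n)}$. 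Density of zero-mean trigonometric polynomials in $L_0^p(\mathbb{T}^n)$ then lets $S$ be extended to the whole space with the advertised operator norm bound.

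The only nontrivial step is the Gaussian-averaging lemma for periodic functions; the regulated hypothesis on $m$ is consumed entirely inside Lemma \ref{LemmaMultiplier} (to make sense of the point values $m(k)$), and once that lemma is available, everything else is H\"older plus explicit Gaussian integrals. I expect the main technical point in the write-up to be the justification of the periodic Riemann-sum limit for $\varepsilon^{n/2}\|fL_{\varepsilon\gamma}\|_{L^r}^r$ at the $L^{p'}$ endpoint when $p'=\infty$ (i.e.\ $p=1$), but for $p\in(1,\infty)$ it is routine and for $p=1$ one can argue directly with $\|Q\|_{L^\infty(\mathbb{T}^n)}=\|Q\|_{L^\infty(\mathbb{R}^n)}$ together with the trivial bound $\|QL_{\varepsilon\beta}\|_{L^\infty}\le\|Q\|_{L^\infty}$ and $\varepsilon^{n/2}\|L_{\varepsilon\alpha}\|_{L^1}=\alpha^{-n/2}$.
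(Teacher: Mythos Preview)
Your proposal is correct and follows exactly the approach the paper indicates: the paper does not give a self-contained proof but states that the result follows from expressing the $L^p$-norm by duality over trigonometric polynomials combined with Lemma~\ref{LemmaMultiplier}, referring to Grafakos for details. Your argument is precisely this duality-plus-Gaussian-averaging computation (the standard Grafakos proof), including the optimal choice $\alpha=1/p$, $\beta=1/p'$; the only cosmetic slip is that in your first displayed chain the right-hand side should carry a $\limsup_{\varepsilon\to 0}$ rather than a bare $\varepsilon$, but the subsequent limit computation makes the intended meaning clear.
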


Theorem \ref{multipliers} is a restatement of Theorem $3.6.7.$ from
\cite{Grafakos2008} page $224$ in the context of $L^{p}$ functions with mean
value $0$. The proof is a consequence of the fact that the $L^{p}$-norm of a
function can be expressed by duality as the supremum over all trigonometric
functions with $L^{p^{\prime}}$ norm less than $1$ combined with
\ref{LemmaMultiplier}. The interested reader is referred to
\cite{Grafakos2008} pages $224-225$ for a complete proof.

\bigskip

\noindent We use Theorem \ref{multipliers} and Remark \ref{norma} in order to
estimate the norm of the Fourier multiplier operator on the torus%
\[
\left(  Id-\left(  2\mu+\lambda\right)  \left(  \mu\Delta_{\theta}+\left(
\mu+\lambda\right)  \Delta\right)  ^{-1}\Delta\right)
\]
whose multiplier is
\[
m\left(  \xi_{1},\xi_{2},\xi_{3}\right)  =\frac{\theta\mu\left\vert \xi
_{3}\right\vert ^{2}}{\left(  2\mu+\lambda\right)  \left(  \left\vert \xi
_{1}\right\vert ^{2}+\left\vert \xi_{2}\right\vert ^{2}\right)  +\left(
\left(  2+\theta\right)  \mu+\lambda\right)  \left\vert \xi_{3}\right\vert
^{2}}.
\]
According to Remark \ref{norma} and Theorem \ref{multipliers}, taking in
consideration that $\theta>-1$ and after some long but straightforward
computations we obtain that there exists a numerical constant $C>0$ such that
\begin{equation}
\left\Vert m\right\Vert _{\mathit{M}_{p}\left(  \mathbb{Z}^{n}\right)  }\leq
C(1+|\theta|)\left\vert \theta\right\vert \mu\frac{\left(  2\lambda
+\mu\right)  }{(\lambda+\mu)^{2}}.\label{norma_multiplicator_fourier}%
\end{equation}

\medskip

\noindent\textbf{Acknowledgments.} D. Bresch and C. Burtea are supported by
the SingFlows project, grant ANR-18-CE40-0027 and D. Bresch is also supported
by the Fraise project, grant ANR-16-CE06- 0011 of the French National Research
Agency (ANR).

\end{document}